\documentclass[11pt]{article}

\usepackage[T1]{fontenc}
\usepackage{lmodern}
\usepackage[utf8]{inputenc}
\usepackage{amsmath,amssymb,amsthm,mathtools,mathrsfs}
\usepackage{aliascnt}
\usepackage[margin=1in]{geometry}
\usepackage{microtype}
\usepackage{enumitem}
\usepackage{needspace}
\usepackage{flafter}
\usepackage{placeins}
\usepackage{booktabs,tabularx}
\usepackage{xcolor}
\usepackage{tikz}
\usepackage[
  colorlinks=true,
  linkcolor=blue!55!black,
  citecolor=blue!55!black,
  urlcolor=blue!55!black,
  pdftitle={Vietoris--Rips Coindex Thresholds for Round Spheres: Spherical Joins and Chromatic Obstructions for Gromov--Hausdorff Distances},
  pdfauthor={Sunhyuk Lim, Facundo M\'emoli, Qingsong Wang}
]{hyperref}
\usepackage[nameinlink,capitalise]{cleveref}

\newtheorem{theorem}{Theorem}[section]
\newaliascnt{lemma}{theorem}
\newtheorem{lemma}[lemma]{Lemma}
\aliascntresetthe{lemma}
\newaliascnt{proposition}{theorem}
\newtheorem{proposition}[proposition]{Proposition}
\aliascntresetthe{proposition}
\newaliascnt{corollary}{theorem}
\newtheorem{corollary}[corollary]{Corollary}
\aliascntresetthe{corollary}
\newaliascnt{remark}{theorem}
\newtheorem{remark}[remark]{Remark}
\aliascntresetthe{remark}
\newaliascnt{definition}{theorem}
\newtheorem{definition}[definition]{Definition}
\aliascntresetthe{definition}
\newaliascnt{example}{theorem}

\aliascntresetthe{example}

\newtheorem{introtheorem}{Main Result}

\newaliascnt{introcorollary}{introtheorem}

\aliascntresetthe{introcorollary}
\crefname{introtheorem}{main result}{main results}
\Crefname{introtheorem}{Main Result}{Main Results}
\crefname{introcorollary}{corollary}{corollaries}
\Crefname{introcorollary}{Corollary}{Corollaries}

\newcommand{\Sph}{\mathbb{S}}

\newcommand{\VR}{\operatorname{VR}}
\newcommand{\VRm}{\operatorname{VR}^{\mathrm{met}}}
\newcommand{\coind}{\operatorname{coind}}
\newcommand{\dis}{\operatorname{dis}}
\newcommand{\dgh}{d_{\mathrm{GH}}}
\newcommand{\chprof}[1]{\chi_{#1}}
\newcommand{\dIchi}{d_{\mathrm I}^{\chi}}
\newcommand{\dIBor}{d_{\mathrm I}^{\mathrm{Bor}}}
\newcommand{\Bor}{\operatorname{Bor}}
\newcommand{\Shatter}{\operatorname{Shatter}}
\newcommand{\Sketch}{\operatorname{Sketch}}
\newcommand{\CapVol}{\mathcal V}
\newcommand{\kappacap}{\kappa^{\mathrm{cap}}}
\newcommand{\coh}{\operatorname{coh}}
\newcommand{\Phichi}{\Phi_{\chi}}

\newcommand{\eps}{\varepsilon}
\newcommand{\id}{\operatorname{id}}

\newcommand{\bigast}{%
  \mathop{\vcenter{\hbox{\scalebox{1.45}{$\ast$}}}}\displaylimits}

\title{Vietoris–Rips Coindex Thresholds for Round Spheres:\\ Spherical Joins and Chromatic Obstructions for\\Gromov--Hausdorff Distances}

\author{%
  Sunhyuk Lim\thanks{Department of Mathematics, Sungkyunkwan University
  (SKKU), Suwon, Republic of Korea.\protect\newline Email:
  \href{mailto:lsh3109@skku.edu}{\texttt{lsh3109@skku.edu}}.}
  \and
  Facundo M\'emoli\thanks{Department of Mathematics, Rutgers, The State
  University of New Jersey, Piscataway, NJ, USA.\protect\newline Email:
  \href{mailto:facundo.memoli@gmail.com}{\texttt{facundo.memoli@gmail.com}}.}
  \and
  Qingsong Wang\thanks{Hal{\i}c{\i}o\u{g}lu Data Science Institute,
  University of California San Diego, La Jolla, CA, USA.\protect\newline Email:
  \href{mailto:qiw072@ucsd.edu}{\texttt{qiw072@ucsd.edu}}.}}
\date{}

\begin{document}

\maketitle

\begin{abstract}
For \(0\leq m\leq n\), let \(c_{m,n}\) be the infimum of the scales
\(r\) at which the Vietoris--Rips filtration of the round sphere
\(\Sph^m\) admits a continuous odd map from \(\Sph^n\).  A
quantitative Borsuk--Ulam theorem gives
\[
 \tfrac12c_{m,n}\leq \dgh(\Sph^m,\Sph^n),
\]
and it was asked whether equality always holds.

Using a synchronized product-measure lift of the
classical spherical join, we construct continuous odd maps between
Vietoris--Rips metric thickenings whose target scale is the maximum of
the input scales.
{Iterating the case in which one factor is
\(\Sph^0\) gives, for every integer \(d\geq0\),}
\[
 {
 c_{m+d,n+d}\leq c_{m,n}.}
\]
{More generally, for any finite family
\((m_i,n_i)\), the join law adds the quantities \(m_i+1\) and
\(n_i+1\) separately and bounds the resulting \(c\)-value by
\(\max_i c_{m_i,n_i}\).  This is the subadditivity principle used
throughout the paper.}

{For each \(0\leq r<\pi\), the pairs \((k,\ell)\)
satisfying \(c_{k-1,\ell-1}\leq r\) are therefore closed under
addition.  Hence the largest such \(\ell\), as a function of \(k\), is
superadditive, and Fekete's lemma gives a limit for its ratio to \(k\).
Combining this structure with exact values and projective-code
estimates, we derive finite bounds for \(c_{m,n}\) and analyze several
asymptotic regimes.  In particular, if \(1\leq m_j<n_j\),
\(m_j\to\infty\), and
\(\log(n_j+1)=o(m_j)\), then
\(c_{m_j,n_j}\to\tfrac\pi2\).  At each fixed
\(r\in(\tfrac\pi2,\pi)\), projective-code and projective-covering
estimates give lower and upper exponential bounds, respectively, for
the largest \(n\) satisfying \(c_{m,n}\leq r\).  Together with the exact
values at and below \(\tfrac\pi2\), these estimates yield a sharp
transition at \(\tfrac\pi2\).  At
\(r=\tfrac\pi2\), the largest \(n\) for which the
Vietoris--Rips filtration of \(\Sph^m\) admits an odd map from
\(\Sph^n\) is \(m\), while for each fixed
\(r\in(\tfrac\pi2,\pi)\) this largest \(n\) grows exponentially with
\(m\).}

{We finally return to the conjectural equality.  We
introduce a Gromov--Hausdorff-stable invariant obtained by applying
chromatic number to the Borsuk-graph filtration of a metric space.
Combining projective-code upper bounds for \(c_{m,n}\)
with chromatic lower bounds for \(\dgh\) obtained from spherical-cap
volume estimates, we show that if \(m_j<n_j\), \(m_j\to\infty\), and
\(\tfrac{n_j}{m_j}\to\lambda\geq9\), then
\(\dgh(\Sph^{m_j},\Sph^{n_j})>\tfrac12c_{m_j,n_j}\) for all
sufficiently large \(j\).
This produces infinitely many counterexamples.}
\end{abstract}

\newpage
\tableofcontents

\section{Introduction}
\label{sec:introduction}

\paragraph{The Gromov--Hausdorff problem and the \(c\)-matrix.}
{The problem of determining the
\emph{Gromov--Hausdorff distance} between unit round spheres equipped
with their geodesic metrics was posed in \cite{lim2021gromov}.  The
argument of \cite[Theorem~B]{lim2021gromov}, based on the quantitative
Borsuk--Ulam theorem of Dubins--Schwarz
\cite{dubins1981equidiscontinuity}, gives, for \(0<m<n<\infty\),
\(\dgh(\Sph^m,\Sph^n)\geq\tfrac12\zeta_m\), where
\(\zeta_m:=\arccos(-\tfrac1{m+1})\) is the common distance between
distinct vertices of a regular simplex with \(m+2\) vertices in
\(\Sph^m\).  A sharper obstruction was subsequently obtained in
\cite[Main Theorem and Section~5]{adams2022gromov} by introducing
thresholds \(c_{m,n}\) derived from the equivariant topology of
\emph{Vietoris--Rips filtrations}, which we recall below; the resulting bound
\(\tfrac12c_{m,n}\) recovers, and can improve, the lower bound
\(\tfrac12\zeta_m\).  Indeed, monotonicity in the second index and
\(c_{m,m+1}=\zeta_m\) give \(c_{m,n}\geq\zeta_m\) whenever
\(n\geq m+1\).}

For a metric space
\(X\), write \(\VR(X;r)\) for the geometric realization of the
{\emph{Vietoris--Rips complex}} whose simplices
are the nonempty finite subsets of \(X\) of diameter at most \(r\).
Let \(C_2\) denote the group of order two, acting antipodally on each
sphere.  A function between \(C_2\)-spaces is called \emph{odd} when it
is equivariant.  For a \(C_2\)-space \(X\), its
{\emph{coindex}} is
\begin{equation}
\label{eq:coindex-intro}
 \coind(X):=
 \sup\bigl\{k\geq0:\text{there is a continuous odd map }\Sph^k\to X\bigr\}.
\end{equation}
Following \cite[Definition~1.1]{adams2022gromov}, for
\(0\leq m\leq n\) define
\begin{equation}
\label{eq:c-definition-intro}
 c_{m,n}:=
 \inf\bigl\{
 r\geq0:
 \coind\bigl(\VR(\Sph^m;r)\bigr)\geq n
 \bigr\}.
\end{equation}
We write \(\VRm(X;r)\) for the
{\emph{Vietoris--Rips metric thickening}}: its
points are the finitely supported probability measures on \(X\) whose
supports have diameter at most \(r\), equipped with the
{{\(1\)-Wasserstein metric}}.  Replacing \(\VR\) by \(\VRm\) in
\eqref{eq:c-definition-intro} gives the same threshold \(c_{m,n}\);
see \cite[Section~5]{adams2022gromov}.

\par\smallskip
\noindent\emph{Convention.} Throughout, for
\(m\geq0\) and \(0\leq r\leq\pi\),
\(Y_{m,r}\) denotes either \(\VR(\Sph^m;r)\) or
\(\VRm(\Sph^m;r)\).  Every statement involving \(Y_{m,r}\) is asserted
separately for both choices, with the same choice understood throughout
the statement.  At \(r=\pi\), both models contain an antipodally fixed
point, so \(\coind(Y_{m,\pi})=+\infty\).  This endpoint is retained in
particular because \(c_{0,n}=\pi\) for \(n>0\); all fixed-scale
statements below impose \(r<\pi\) where needed.
\par\smallskip

With this convention, \(c_{m,n}\) is the infimum of
the scales \(r\) for which there is a continuous odd map
\(\Sph^n\to Y_{m,r}\).  Thus \(n\) is the odd-map source dimension,
while \(m\) is the dimension of the sphere underlying the
Vietoris--Rips filtration.
{
The {quantitative Borsuk--Ulam theorem} proved in
\cite[Main Theorem]{adams2022gromov} gives
\begin{equation}
\label{eq:polymath-lower-bound}
 \dgh(\Sph^m,\Sph^n)
 \geq\frac12c_{m,n}.
\end{equation}
}
{The question posed in
\cite[Question~8.1]{adams2022gromov} is whether the resulting lower
bound is always sharp:}
\begin{equation}
\label{eq:equality-question}
 \dgh(\Sph^m,\Sph^n)
 \stackrel{?}{=}
 \frac12c_{m,n}
 \qquad(0\leq m<n);
\end{equation}

{The infinite upper-triangular matrix
\((c_{m,n})_{0\leq m\leq n<\infty}\), which we call the
\emph{\(c\)-matrix}, records the critical coindex thresholds of the
Vietoris--Rips filtrations of spheres.} %
At present, substantially more is known about the
\(c\)-matrix than about the corresponding infinite upper-triangular
matrix
\(\bigl(\dgh(\Sph^m,\Sph^n)\bigr)_{0\leq m\leq n<\infty}\):
\Cref{fig:c-gh-matrix-overview} compares their known exact entries and
monotonicity properties.

\par\smallskip
Restriction along the equatorial inclusion
\(\Sph^n\hookrightarrow\Sph^{n+1}\) gives
\(c_{m,n}\leq c_{m,n+1}\), while the odd, scale-preserving map of
Vietoris--Rips filtrations induced by
\(\Sph^m\hookrightarrow\Sph^{m+1}\) gives
\(c_{m+1,n}\leq c_{m,n}\).
Thus \(c_{m,n}\) is nondecreasing along rows and nonincreasing down columns.
The corresponding row monotonicity for \(\dgh(\Sph^m,\Sph^n)\) was posed in
\cite[Question~I]{lim2021gromov}; neither coordinate monotonicity is
known for the Gromov--Hausdorff array.
For \(\ell\geq1\), let \(\rho_\ell\) denote the diameter, in the
geodesic metric on \(\Sph^1\), of the vertex set of a regular
\((2\ell+1)\)-gon; equivalently,
\(\rho_\ell=\tfrac{2\pi\ell}{2\ell+1}\).
The nonzero values displayed in the \(c\)-matrix---the \(m=0\) and
\(m=1\) rows and the identities
\(c_{m,m+1}=c_{m,m+2}=\zeta_m\)---come from
\cite[Section~5]{adams2022gromov}.  For the Gromov--Hausdorff matrix,
the \(m=0\) row is \cite[Proposition~1.5]{lim2021gromov}, the \(m=1\)
row is \cite[Theorem~1.1]{harrison2023quantitative}, and the remaining
consecutive-sphere entries are
\cite[Theorem~A]{kim2026consecutive}.

\par\smallskip

\begin{figure}[!htb]
\centering
\begin{tikzpicture}[x=0.86cm,y=0.86cm,
                    every node/.style={inner sep=1pt,outer sep=0pt}]

  \newcommand{\entrycell}[4]{%
    \filldraw[fill=#3,draw=black!35,line width=0.3pt]
      (#1,-#2) rectangle (#1+1,-#2-1);
    \node[font=\footnotesize] at (#1+.5,-#2-.5) {#4};}

  \node[font=\small] at (3.5,1.32) {\(\tfrac12c_{m,n}\)};
  \begin{scope}
    \foreach \m in {0,...,6}{
      \foreach \n in {\m,...,6}{
        \pgfmathtruncatemacro{\gap}{\n-\m}
        \ifnum\gap=0
          \entrycell{\n}{\m}{black!12}{\(0\)}
        \else
          \ifnum\m=0
            \entrycell{\n}{\m}{black!12}{\(\tfrac\pi2\)}
          \else
            \ifnum\m=1
              \pgfmathtruncatemacro{\ellindex}{ceil((\n-1)/2)}
              \entrycell{\n}{\m}{black!12}{\(\tfrac{\rho_{\ellindex}}2\)}
            \else
              \ifnum\gap<3
                \entrycell{\n}{\m}{black!12}{\(\tfrac{\zeta_{\m}}2\)}
              \else
                \entrycell{\n}{\m}{white}{\(?\)}
              \fi
            \fi
          \fi
        \fi
      }
    }
    \foreach \n in {0,...,6} \node[font=\scriptsize] at (\n+.5,0.30) {\(\n\)};
    \foreach \m in {0,...,6} \node[font=\scriptsize] at (-0.34,-\m-.5) {\(\m\)};
    \node[font=\scriptsize] at (3.5,0.80) {\(n\)};
    \node[font=\scriptsize,rotate=90] at (-0.90,-3.5) {\(m\)};
    \draw[black!45,line width=0.3pt] (0,-7.28) -- (7,-7.28);
    \node[anchor=north west,font=\footnotesize,align=left]
      at (0,-7.42)
      {\(\rightarrow\) along rows: nondecreasing\\
       \(\downarrow\) down columns: nonincreasing\\
       \(\searrow\) along diagonals: nonincreasing (this paper)};
  \end{scope}

  \node at (8.25,-3.50) {\scalebox{2}{\(\leq\)}};

  \begin{scope}[shift={(10.3,0)}]
    \node[font=\small] at (3.5,1.32) {\(\dgh(\Sph^m,\Sph^n)\)};
    \foreach \m in {0,...,6}{
      \foreach \n in {\m,...,6}{
        \pgfmathtruncatemacro{\gap}{\n-\m}
        \ifnum\gap=0
          \entrycell{\n}{\m}{black!12}{\(0\)}
        \else
          \ifnum\m=0
            \entrycell{\n}{\m}{black!12}{\(\tfrac\pi2\)}
          \else
            \ifnum\m=1
              \pgfmathtruncatemacro{\ellindex}{ceil((\n-1)/2)}
              \entrycell{\n}{\m}{black!12}{\(\tfrac{\rho_{\ellindex}}2\)}
            \else
              \ifnum\gap=1
                \entrycell{\n}{\m}{black!12}{\(\tfrac{\zeta_{\m}}2\)}
              \else
                \entrycell{\n}{\m}{white}{\(?\)}
              \fi
            \fi
          \fi
        \fi
      }
    }
    \foreach \n in {0,...,6} \node[font=\scriptsize] at (\n+.5,0.30) {\(\n\)};
    \foreach \m in {0,...,6} \node[font=\scriptsize] at (-0.34,-\m-.5) {\(\m\)};
    \node[font=\scriptsize] at (3.5,0.80) {\(n\)};
    \node[font=\scriptsize,rotate=90] at (-0.90,-3.5) {\(m\)};
    \draw[black!45,line width=0.3pt] (0,-7.28) -- (7,-7.28);
    \node[anchor=north west,font=\footnotesize,align=left]
      at (0,-7.42)
      {\(\rightarrow\) along rows: open\\
       \(\downarrow\) down columns: open\\
       \(\searrow\) along diagonals: nonincreasing (\cite{kim2026consecutive})};
  \end{scope}
\end{tikzpicture}
\caption{Finite windows of the normalized coindex
thresholds \(\tfrac12c_{m,n}\) (left) and the Gromov--Hausdorff distances
\(\dgh(\Sph^m,\Sph^n)\) (right); the former are lower bounds for the
latter.  Rows are indexed by \(m\) and columns by \(n\), so \(m\) is fixed
along each row.  Shaded cells carry exact values; a question mark means
that the exact value is unknown, not that no bounds are available.  The
arrows below each array indicate its known row, column, and diagonal
monotonicity properties.  Every cell known exactly for both quantities
agrees.  Only \(c_{m,n}\) is known to satisfy both coordinate
monotonicities.}
\label{fig:c-gh-matrix-overview}
\end{figure}
\medskip
We first study the structure
and asymptotics of \((c_{m,n})_{0\leq m\leq n<\infty}\), and then return to whether the lower bound in
\eqref{eq:polymath-lower-bound} is sharp.

\paragraph{Geometric bounds and the spherical-join law.}
For compact metric spaces \(X,Y\), a \emph{correspondence} is a relation
\(\mathcal R\subseteq X\times Y\) that projects onto both factors.
{
For a non-empty relation \(\mathcal R\subseteq X\times Y\), its
\emph{distortion} is
\[
 \dis(\mathcal R):=
 \sup\bigl\{
  |d_X(x,x')-d_Y(y,y')|:
 (x,y),(x',y')\in\mathcal R
 \bigr\}.
\]
{For a function \(f\colon X\to Y\), we write
\(\dis(f)\) for the distortion of its graph.}
}
{The \emph{correspondence formula} is
\(\dgh(X,Y)=\tfrac12\inf_{\mathcal R}\dis(\mathcal R)\), where the
infimum is over all correspondences between \(X\) and \(Y\)
\cite[Theorem~7.3.25]{burago2022course}.}
The \emph{synchronized spherical join of
correspondences} uses the same spherical-join parameter in the source
and target, and the distortion of the joined correspondence is the
maximum of the factor distortions
\cite[Definition~1.1 and Theorem~B]{kim2026consecutive}.  This suggests a
structural law for the \(c\)-matrix, rather than merely bounds for its
individual entries.  Their suspension construction gives, for
\(0\leq m\leq n\) and every integer \(d\geq0\),
\[
 \dgh(\Sph^{m+d},\Sph^{n+d})
 \leq\dgh(\Sph^m,\Sph^n).
\]
This is the diagonal monotonicity indicated by the
southeast arrow beneath the right-hand matrix in
\Cref{fig:c-gh-matrix-overview}.
More generally, for \(q\geq1\) and
\(0\leq m_i\leq n_i\) with \(1\leq i\leq q\), define
\begin{equation}
\label{eq:intro-joined-dimensions}
 M:=\sum_{i=1}^q(m_i+1)-1,
 \qquad
 N:=\sum_{i=1}^q(n_i+1)-1.
\end{equation}
Their \emph{max law} reads
\[
 \dgh(\Sph^M,\Sph^N)
 \leq
 \max_{1\leq i\leq q}
 \dgh(\Sph^{m_i},\Sph^{n_i}).
\]

{If \eqref{eq:equality-question} held universally,
these geometric laws would descend immediately to the \(c\)-matrix.
}
Our first result proves both conclusions without
assuming \eqref{eq:equality-question}, by constructing the required odd
maps through a synchronized product-measure lift of the classical
spherical join.

\begin{introtheorem}[Diagonal monotonicity and the spherical-join law]
\label{thm:intro-join-law}
For all integers \(0\leq m\leq n\) and \(d\geq0\),
\[
 c_{m+d,n+d}\leq c_{m,n}.
\]
More generally, let \(q\geq1\), let \(m_i\geq0\) for
\(1\leq i\leq q\), and let \(M\) be given by the first formula in
\eqref{eq:intro-joined-dimensions}.  For all \(0\leq r_i\leq\pi\),
there is an odd continuous map
\[
 \bigast_{i=1}^q\VRm(\Sph^{m_i};r_i)
 \longrightarrow
 \VRm\!\left(\Sph^M;\max_{1\leq i\leq q}r_i\right).
\]
If, in addition, integers \(n_i\geq m_i\) are given and \(N\) is
defined by the second formula in
\eqref{eq:intro-joined-dimensions}, then
\begin{equation}
\label{eq:intro-c-finite-join}
 c_{M,N}\leq\max_{1\leq i\leq q}c_{m_i,n_i}.
\end{equation}

\end{introtheorem}

The map and its consequences for the \(c\)-matrix are proved in
\Cref{thm:finite-metric-join,thm:c-finite-join,cor:c-diagonal-monotonicity}.
This law provides the algebraic input for the asymptotic analysis
below.

\paragraph{Additive sublevels.}
{After shifting both indices by one, the finite join law
becomes ordinary addition.}  For each
\(0\leq r\leq \pi\), define the
\emph{additive \(r\)-sublevel set}
\[
 \Gamma_r:=
 \bigl\{(k,\ell)\in\mathbb N^2:
 1\leq k\leq\ell,\ c_{k-1,\ell-1}\leq r\bigr\}.
\]
Equation~\eqref{eq:intro-c-finite-join} implies that \(\Gamma_r\) is
closed under addition.  Together with the coordinate
monotonicities of the \(c\)-matrix recorded above, diagonal monotonicity gives the
order-closure properties of \(\Gamma_r\): membership is preserved by
common diagonal shifts, by increasing \(k\), and by decreasing
\(\ell\), whenever \(k\leq\ell\).
For fixed \(k\geq2\), one has \(c_{k-1,\ell-1}\to\pi\) as
\(\ell\to\infty\) \cite[Theorem~5.3]{adams2022gromov}; for \(k=1\),
one has \(c_{0,\ell-1}=\pi\) when \(\ell>1\).  {Hence,
for \(r<\pi\), the following maximum is finite and its defining set is
nonempty because \(c_{k-1,k-1}=0\).  Define the
\emph{dimension-frontier function} at scale \(r\) by}
\[
 B_r(k):=\max\{\ell\geq k:(k,\ell)\in\Gamma_r\}.
\]
Thus \(B_r(k)-1\) is the largest \(n\geq k-1\) for
which \(c_{k-1,n}\leq r\).  Additive closure gives
\(B_r(k_1+k_2)\geq B_r(k_1)+B_r(k_2)\).  Hence Fekete's lemma
{gives the extended limit}
{
\[
 \Lambda(r):=
 \lim_{k\to\infty}\frac{B_r(k)}k
 =\sup_{k\geq1}\frac{B_r(k)}k
 \in[1,+\infty].
\]
}
Determining this limit
requires off-diagonal inputs, supplied below by exact thresholds and
projective codes.

\paragraph{Random projective codes.}
{{Projective codes} give upper bounds for
\(c_{m,n}\) in regimes where both indices grow, and hence control the
frontier \(B_r\) above \(\tfrac\pi2\).}

\begin{introtheorem}[Random projective-code bound and convergence]
\label{thm:intro-random-projective-codes}
For \(1\leq m<n\), put
\[
 \tau_{m,n}:=
 \sqrt{\frac{4\log(n+1)+2}{m+1}}.
\]
Then
\[
 c_{m,n}
 \leq
 \frac{\pi}{2}
 +\arcsin\bigl(\min\{\tau_{m,n},1\}\bigr).
\]
{The exact value
\(c_{m,m+1}=\arccos(-\tfrac1{m+1})\)
and monotonicity in the second index give
\(c_{m,n}\geq c_{m,m+1}\).}  Consequently, if
\(1\leq m_j<n_j\), \(m_j\to\infty\), and
\(\log(n_j+1)=o(m_j)\), then
\[
 c_{m_j,n_j}\longrightarrow\frac{\pi}{2}.
\]
In particular, the conclusion holds if
\(\tfrac{n_j}{m_j}\to\lambda\) for some
\(1<\lambda<\infty\).
\end{introtheorem}

These assertions are proved in
\Cref{thm:random-projective-code-bound,cor:subexponential-convergence}.
These inputs give \(\Lambda(r)=1\) for
\(0\leq r\leq\tfrac\pi2\) and \(\Lambda(r)=+\infty\) for
\(\tfrac\pi2<r<\pi\). Thus the linear normalization detects the
transition but does not measure growth above it.
Main Result~\ref{thm:intro-dimensional-phase-transition}
gives the exponential refinement and its coindex consequence.

\paragraph{{The phase transition at \(\tfrac\pi2\).}}
The frontier also controls fixed-scale coindex.  An
odd map \(\Sph^n\to Y_{m,r}\) implies
\(n+1\leq B_r(m+1)\), and hence
\(\coind(Y_{m,r})\leq B_r(m+1)-1\).
{The random projective-code construction underlying
\Cref{thm:random-projective-code-bound} supplies the complementary
exponential coindex lower bound; see
\Cref{cor:fixed-scale-dimensional-amplification}.}

{For positive quantities \(F\) and \(G\), the notation
\(F=\Theta_\alpha(G)\) means that \(F/G\) is bounded above and bounded
away from zero in the stated limiting regime, with the bounds allowed
to depend on \(\alpha\).}
\begin{introtheorem}[{Phase transition at \(\tfrac\pi2\)}]
\label{thm:intro-dimensional-phase-transition}
{For every \(0\leq r\leq\tfrac\pi2\) and
\(k\geq1\),}
\[
 B_r(k)=k.
\]
For each fixed \(\tfrac\pi2<r<\pi\),
\[
 0<
 \liminf_{k\to\infty}\frac1k\log B_r(k)
 \leq
 \limsup_{k\to\infty}\frac1k\log B_r(k)
 <\infty.
\]
In the notation fixed above,
\[
 \coind(Y_{m,r})=m
 \qquad\left(m\geq0,\ 0<r\leq\tfrac\pi2\right),
\]
while, for every fixed \(\tfrac\pi2<r<\pi\),
\[
 \log\bigl(\coind(Y_{m,r})+1\bigr)=\Theta_r(m)
 \qquad(m\to\infty).
\]
{Moreover, the lower exponential rate is explicit: if
\(0<\varepsilon<\tfrac\pi2\), then}
\[
 \liminf_{m\to\infty}
 \frac1m\log\bigl(\coind(Y_{m,\pi/2+\varepsilon})+1\bigr)
 \geq\frac14\sin^2\varepsilon.
\]
\end{introtheorem}

Section~6 proves the phase-transition statement in
stages: \Cref{prop:capacity-dichotomy} gives the exact frontier at and
below \(\tfrac\pi2\), \Cref{thm:exponential-capacity-bounds} gives its
two-sided exponential growth above \(\tfrac\pi2\),
\Cref{cor:fixed-scale-dimensional-amplification} transfers these bounds
to coindex, and \Cref{cor:fixed-scale-coindex-rate} records the explicit
lower rate.
{Thus every fixed positive increase above
\(\tfrac\pi2\) changes the coindex from exactly \(m\) to exponential
growth in \(m\).}

\paragraph{Fixed rows.}
The phase transition above concerns fixed scales while both indices
grow.  At another boundary of the \(c\)-matrix, fix the first index and
let the second tend to infinity.  For every fixed \(m\geq1\),
\[
 c_{m,n}=\pi-\Theta_m(n^{-1/m})
 \quad(n\to\infty),
 \qquad
 \coind(Y_{m,r})
 =\Theta_m\bigl((\pi-r)^{-m}\bigr)
 \quad(r\uparrow\pi).
\]
The packing--covering sandwich of
\Cref{prop:c-projective-sandwich} yields these statements in
\Cref{prop:c-fixed-row-rate,cor:coindex-fixed-sphere-rate}.  In
particular, for each fixed \(m\), both Vietoris--Rips filtrations
realize infinitely many distinct \(C_2\)-equivariant homotopy types at
scales accumulating at \(\pi\).

\paragraph{{A negative answer to the equality question.}}
{The preceding results describe the structure of the
\(c\)-matrix but leave open the question that motivated it: whether
\(\dgh(\Sph^m,\Sph^n)=\tfrac12c_{m,n}\) always holds.  The answer is
no.  In fact, if \(m_j\to\infty\) and \(n_j/m_j\to\lambda\), with
\(\lambda\) above an explicit threshold, then the inequality is strict
for all sufficiently large \(j\).  We prove this using lower bounds
derived from the chromatic behavior of Borsuk graphs of spheres.  Under
these hypotheses, Main
Result~\ref{thm:intro-random-projective-codes} gives
\(\tfrac12c_{m_j,n_j}\to\tfrac\pi4\); the chromatic argument below
gives a Gromov--Hausdorff lower bound that remains strictly larger than
\(\tfrac\pi4\) when \(\lambda\) exceeds the stated threshold.}

For a compact metric space \(X\) and \(s>0\), let
\(\Bor_{\geq s}(X)\) denote the \emph{closed Borsuk graph}: it is the
simple graph with vertex set \(X\) in which distinct points are
adjacent when their distance is at least \(s\).  These graphs form a
filtration in the {\emph{graph-homomorphism preorder}}.
Write \(\dIBor(X,Y)\) for
the infimum of the shifts \(\delta\geq0\) such that, for every
\(u>\delta\), graph homomorphisms
{\(\Bor_{\geq u}(X)\to\Bor_{\geq u-\delta}(Y)\) and
\(\Bor_{\geq u}(Y)\to\Bor_{\geq u-\delta}(X)\)}
exist.  Applying chromatic number at every scale gives the
{\emph{chromatic profile}
\(\chprof{X}(s):=\chi\bigl(\Bor_{\geq s}(X)\bigr)\).}
Equivalently, \(\chprof{X}(s)\) is the least number of parts in a
partition of \(X\) such that any two distinct points in the same part
have distance less than \(s\).  Write \(\dIchi(X,Y)\) for the
infimum of the shifts \(\delta\geq0\) such that, for every \(u>\delta\),
{\(\chprof{X}(u)\leq\chprof{Y}(u-\delta)\) and
\(\chprof{Y}(u)\leq\chprof{X}(u-\delta)\).}

The correspondence formula for Gromov--Hausdorff distance, together
with monotonicity of chromatic number under graph homomorphisms, gives
the following stability inequalities.

\begin{introtheorem}[{Stability of Borsuk-graph filtrations
and chromatic profiles}]
\label{thm:intro-chromatic-stability}
For nonempty compact metric spaces \(X,Y\),
\[
 \dIchi(X,Y)
 \leq\dIBor(X,Y)
 \leq2\dgh(X,Y).
\]
\end{introtheorem}

Main Result~\ref{thm:intro-chromatic-stability} is
\Cref{prop:chromatic-stability}; \Cref{cor:one-scale-mismatch} records
the one-scale form used below.

{
For the application below, if
\(0<t_0<s_0\) and \(\chprof{X}(s_0)>\chprof{Y}(t_0)\), then
\(\dIchi(X,Y)\geq s_0-t_0\), and hence
\(\dgh(X,Y)\geq\tfrac12(s_0-t_0)\).
We obtain such one-scale mismatches for high-dimensional spheres from
the asymptotic growth of their chromatic profiles.
}

{
For \(\lambda>1\), define
\(\Phichi(\lambda):=\max_{0<x<\pi/2}
\left[x-\arcsin\bigl((\sin x)^\lambda\bigr)\right]\).
The function \(\Phichi\) is
continuous and strictly increasing.  By
\Cref{cor:proportional-counterexamples}, there exists a unique
\(\lambda_*>1\) characterized by
\(\Phichi(\lambda)>\tfrac\pi4\Longleftrightarrow
\lambda>\lambda_*\).
}

\par\medskip
\begin{introtheorem}[Chromatic lower bounds and strict separation]
\label{thm:intro-strict-separation}
Let \(m_j<n_j\), let \(m_j\to\infty\), and suppose that
\(\tfrac{n_j}{m_j}\to\lambda>1\).  Then
\[
 \liminf_{j\to\infty}
 \dgh(\Sph^{m_j},\Sph^{n_j})
 \geq\Phichi(\lambda).
\]
If \(\lambda>\lambda_*\), then
\begin{equation}
\label{eq:intro-proportional-strict-separation}
 \liminf_{j\to\infty}
 \left(
  \dgh(\Sph^{m_j},\Sph^{n_j})
  -\frac12c_{m_j,n_j}
 \right)
 \geq
 \Phichi(\lambda)-\frac\pi4
 >0.
\end{equation}
In particular,
\(\tfrac12c_{m_j,n_j}<\dgh(\Sph^{m_j},\Sph^{n_j})\) for all
sufficiently large \(j\).  Moreover,
\(\lambda>\lambda_*\) whenever \(\lambda\geq9\).
\end{introtheorem}

{
To obtain the first inequality, \Cref{prop:chromatic-entropy}
determines the exponential growth of the chromatic profiles of round
spheres, and Main Result~\ref{thm:intro-chromatic-stability} converts
the resulting profile mismatch into a Gromov--Hausdorff lower bound;
optimizing the two scales gives \(\Phichi\), as proved in
\Cref{thm:ratio-chromatic-lower}.  Main
Result~\ref{thm:intro-random-projective-codes} gives
\(\tfrac12c_{m_j,n_j}\to\tfrac\pi4\), which yields
\eqref{eq:intro-proportional-strict-separation}.
Corollary~\ref{cor:proportional-counterexamples} gives
an explicit variational formula for \(\lambda_*\) and verifies the
sufficient condition \(\lambda\geq9\).
}

Thus Main Result~\ref{thm:intro-strict-separation} answers
\eqref{eq:equality-question} negatively.  An explicit finite instance is proved in
Appendix~\ref{app:explicit-finite-counterexample}:
\(\tfrac12c_{239,30000}<\dgh(\Sph^{239},\Sph^{30000})\).

\paragraph{Organization of the paper.}
After the preliminaries, Sections~3--7 develop the
\(c\)-matrix: the spherical join law is combined with circle thresholds
and projective-code bounds to study fixed-scale, near-diagonal, and
fixed-\(m\) asymptotics.  Section~8 constructs Borsuk-graph and
chromatic lower bounds for Gromov--Hausdorff distance and uses them to
prove strict separation from \(\tfrac12c_{m,n}\).  Section~9 discusses
open problems; the appendices contain deferred proofs, auxiliary
estimates, alternative arguments, and explicit examples.

\paragraph{Acknowledgments.}
S.~Lim gratefully acknowledges support from the National Research
Foundation of Korea (NRF) through grants funded by the Korean government
(MSIT, RS-2025-23324186). F.~M\'emoli gratefully acknowledges support
from the National Science Foundation through grants CCF-2523653 and
DMS-2524362.
During the preparation of this manuscript, the authors
used OpenAI's ChatGPT for exploratory calculations and editorial
assistance.  The authors take full responsibility for the manuscript.

\section{Preliminaries}
\label{sec:preliminaries}

We fix the metric conventions for round spheres, packing and covering,
and Gromov--Hausdorff distance, followed by the equivariant
Vietoris--Rips notation and the definition of the thresholds \(c_{m,n}\).
We conclude with the exact low-codimension thresholds
used in later sections.

\subsection{\texorpdfstring{Metric conventions}{Metric conventions}}

For \(m\geq1\), the sphere \(\Sph^m\) is the round unit sphere
\[
 \Sph^m
 :=
 \left\{
  x=(x_1,\ldots,x_{m+1})\in\mathbb R^{m+1}:
  \sum_{i=1}^{m+1}x_i^2=1
 \right\},
\]
equipped with its geodesic metric
\[
 d_m(x,x'):=\arccos\langle x,x'\rangle,
 \qquad x,x'\in\Sph^m.
\]
In particular,
\[
 \operatorname{diam}(\Sph^m,d_m)=\pi.
\]
The zero-sphere is modeled separately as
\[
 \Sph^0:=\{p_0,q_0\},
 \qquad
 d_0(p_0,q_0):=\pi.
\]
When spherical coordinates are used, we identify \(p_0\) with \(1\) and
\(q_0\) with \(-1\) in \(\mathbb R\).
Thus \(\operatorname{diam}(\Sph^m)=\pi\) for every \(m\geq0\).
For \(m\geq1\), we will repeatedly pass between angular and chordal
distance using
\begin{equation}
\label{eq:angular-chordal-conversion}
 \lVert x-x'\rVert
 =2\sin\!\left(\frac{d_m(x,x')}{2}\right)
 \qquad(x,x'\in\Sph^m).
\end{equation}

We use the following packing and covering quantities for compact
metric spaces.
\label{ssec:projective-packing-covering}

\begin{definition}[Packing distance and covering radius]
For a compact metric space \(X\) and an integer \(k\geq2\), write
\[
 \mathrm{pack}_X(k):=
 \sup_{x_1,\ldots,x_k\in X}
 \min_{i\neq j}d_X(x_i,x_j)
\]
for its optimal \(k\)-point packing distance.  For \(k\geq1\), write
\[
 \mathrm{cov}_X(k):=
 \inf_{\substack{A\subseteq X\\1\leq|A|\leq k}}
 \sup_{x\in X}d_X(x,A)
\]
for its optimal \(k\)-point covering radius.
\end{definition}

These quantities satisfy the elementary packing--covering comparison
\[
 \frac12\mathrm{pack}_X(k+1)
 \leq
 \mathrm{cov}_X(k)
 \leq
 \mathrm{pack}_X(k)
 \qquad(k\geq2).
\]
These are the fixed-cardinality packing and covering conventions used
in \cite[p.~3]{harrison2023quantitative}.  The displayed comparison is
the standard maximal-packing/net argument; see
\cite[Section~11.1.4, pp.~343--344]{petersen2016riemannian}.

\subsection{\texorpdfstring{Equivariant Vietoris--Rips
constructions and thresholds}{Equivariant Vietoris--Rips constructions
and thresholds}}

Let \(C_2=\{1,\tau\}\) act antipodally on every sphere.
A map of \(C_2\)-spaces is called \emph{odd} if it is
\(C_2\)-equivariant.

We use the coindex convention fixed in \eqref{eq:coindex-intro}.
This is the usual equivariant coindex; see
\cite[Section~5.3, especially p.~99]{matouvsek2003using} and
\cite[Section~8.3.2]{kozlov2008combinatorial} for this and the related
equivariant indices used below.

We apply coindex to the following two Vietoris--Rips constructions.
\label{ssec:vr-threshold-preliminaries}

\begin{definition}[Vietoris--Rips complex]
\label{def:vr-complex}
If \(X\) is a metric space and \(r\geq0\), the
\emph{Vietoris--Rips complex} \(\VR(X;r)\) is the abstract simplicial
complex with vertex set \(X\) in which a nonempty finite set
\(\sigma\subseteq X\) is a simplex exactly when
\(\operatorname{diam}(\sigma)\leq r\).
We identify a simplicial complex with its geometric realization.
\end{definition}

We use the closed convention \(\operatorname{diam}(\sigma)\leq r\);
see \cite[Section~3.1]{adams2022gromov} for background on the two
Vietoris--Rips conventions used here.

\begin{definition}[Vietoris--Rips metric thickening]
\label{def:vr-metric-thickening}
For a metric space \(X\) and \(r\geq0\), the
\emph{Vietoris--Rips metric thickening} \(\VRm(X;r)\) is the space of
finitely supported probability measures
\[
 \mu=\sum_{i=0}^k\alpha_i\delta_{x_i},
 \qquad
 \alpha_i\geq0,
 \qquad
 \sum_i\alpha_i=1,
 \qquad
 \operatorname{diam}(\operatorname{supp}\mu)\leq r,
\]
equipped with the 1-Wasserstein metric.
\end{definition}

Vietoris--Rips metric thickenings were introduced in
\cite[Definition~3.1]{adamaszek2018metric}; see also
\cite{adams2020metric} and the background in
\cite[Section~3.3]{adams2022gromov}.
For the 1-Wasserstein metric and its topology on compact spaces, see
\cite[Definition~6.1 and Theorem~6.9]{villani2009optimal}.
The geometric realization of \(\VR(X;r)\) and \(\VRm(X;r)\) have the
same underlying set of formal finite convex combinations, but their
topologies can differ when \(X\) is nondiscrete; see
\cite[Remarks~3.2--3.3]{adamaszek2018metric}.  The join constructed
below uses the Wasserstein topology on the metric thickening, for which
the product-measure formula is continuous.

For a measurable map \(f\colon X\to Y\) and a Borel measure \(\mu\) on
\(X\), we write \(f_{\#}\mu\) for the pushforward measure on \(Y\), defined by
\[
 (f_{\#}\mu)(B):=\mu\bigl(f^{-1}(B)\bigr)
 \qquad\text{for every Borel set }B\subseteq Y.
\]
If \(X\) has an isometric involution \(a\colon X\to X\), both constructions
inherit involutions:
\[
 a\left(\sum_i\alpha_i x_i\right)
 :=\sum_i\alpha_i a(x_i),
 \qquad
 a_{\#}\left(\sum_i\alpha_i\delta_{x_i}\right)
 :=\sum_i\alpha_i\delta_{a(x_i)}.
\]
For round spheres, \(a(x)=-x\), and these actions are free at every scale
\(r<\pi\).

The ordinary threshold \(c_{m,n}\) was defined in
\eqref{eq:c-definition-intro}.

\begin{definition}[Metric-thickening obstruction threshold]
For \(0\leq m\leq n\), define
\[
 c^{\mathrm{met}}_{m,n}
 :=
 \inf\bigl\{
 r\geq0:
 \coind\bigl(\VRm(\Sph^m;r)\bigr)\geq n
 \bigr\}.
\]
\end{definition}

The equivariant interleavings between ordinary Vietoris--Rips complexes
and metric thickenings imply
\begin{equation}
\label{eq:c-metric-equality}
 c^{\mathrm{met}}_{m,n}=c_{m,n};
\end{equation}
see \cite[beginning of Section~5]{adams2022gromov}.  Consequently we may use metric
thickenings when proving inequalities between the \(c\)-constants.
The value of either threshold is unchanged if one instead uses the
strict convention \(\operatorname{diam}(\sigma)<r\); see the same
reference.

\subsection{Exact low-codimension thresholds}

We record the threshold values used repeatedly below
(see also \Cref{fig:c-gh-matrix-overview}).

{For \(m\geq0\), put
\(\zeta_m:=\arccos(-\tfrac1{m+1})\).  Then \(\zeta_0=\pi\),
the sequence \((\zeta_m)\) is strictly decreasing, and
\(\zeta_m\to\tfrac{\pi}{2}\).  The exact values in dimension gaps one
and two are}
\begin{equation}
\label{eq:low-codimension-c-values}
 c_{m,m+1}=c_{m,m+2}=\zeta_m
 \qquad(m\geq0).
\end{equation}
For \(m\geq1\), this is
\cite[Theorem~5.2]{adams2022gromov}.  For \(m=0\), the more general
identity \(c_{0,k}=\pi=\zeta_0\) for every \(k>0\) is recorded in
\cite[Section~5]{adams2022gromov}.

\section{\texorpdfstring{Spherical joins in Vietoris--Rips
topology and the \(c\)-matrix}{Spherical joins in Vietoris--Rips
topology and the c-matrix}}
\label{sec:metric-join}

\begingroup
\setlength{\abovedisplayskip}{6pt plus 2pt minus 2pt}
\setlength{\belowdisplayskip}{6pt plus 2pt minus 2pt}
\setlength{\abovedisplayshortskip}{3pt plus 2pt minus 1pt}
\setlength{\belowdisplayshortskip}{4pt plus 2pt minus 1pt}

This section constructs an odd continuous join map
between Vietoris--Rips metric thickenings whose target scale is the
maximum of the input scales.  Passing to coindex yields the finite join
inequality for \(c_{m,n}\).  At scales at least \(\tfrac\pi2\), the same
spherical geometry gives a corresponding map for ordinary
Vietoris--Rips complexes.  Appendix~\ref{sec:modulus} gives an
independent proof of the \(c\)-matrix inequality using the modulus of
discontinuity.

\subsection{Spherical-join geometry}

For spaces \(X\) and \(Z\), their \emph{topological join} is the quotient
\(X*Z:=\bigl(X\times Z\times[0,\tfrac{\pi}{2}]\bigr)/{\sim}\),
where \((x,z,0)\sim(x,z',0)\) and
\((x,z,\tfrac{\pi}{2})\sim(x',z,\tfrac{\pi}{2})\).  We write
\([x,z,\theta]\) for the corresponding equivalence class.  For
\(C_2\)-spaces, the join carries the diagonal action
\([x,z,\theta]\mapsto[\tau x,\tau z,\theta]\), and we write
\(\Sigma X:=X*\Sph^0\) for the resulting suspension.  See
\cite[Definition~4.2.3 and Proposition~4.2.4]{matouvsek2003using}.

\begin{definition}[Spherical metric join]
\label{def:spherical-metric-join}
Let \((X,d_X)\) and \((Z,d_Z)\) be compact metric spaces of diameter at most
\(\pi\).  Their \emph{spherical metric join}, denoted by
\(X*_{\mathrm{sph}}Z\), has underlying topological space \(X*Z\).  For
\(x,x'\in X\), \(z,z'\in Z\), and
\(\theta,\phi\in[0,\tfrac{\pi}{2}]\), the distance
\(d_{\mathrm{sph}}([x,z,\theta],[x',z',\phi])\) is the unique number in
\([0,\pi]\) satisfying
\begin{equation}
\label{eq:spherical-metric-join-definition}
\begin{split}
 \cos d_{\mathrm{sph}}\bigl([x,z,\theta],[x',z',\phi]\bigr)
 &=
 \cos\theta\cos\phi\,\cos d_X(x,x')\\
 &\quad+
 \sin\theta\sin\phi\,\cos d_Z(z,z').
\end{split}
\end{equation}
\end{definition}
The spherical metric join is classical; see
\cite[Definition~I.5.13]{bridson1999metric}, where the construction is
attributed to Berestovski\u\i.
The formula makes the identity from the quotient join to the metric join
continuous.  Since the former is compact and the latter Hausdorff, it is a
homeomorphism.  Every factor used below is a round sphere.

For nonnegative integers \(k\) and \(\ell\), let \(x\in\Sph^k\),
\(y\in\Sph^\ell\), and \(0\leq\theta\leq\tfrac{\pi}{2}\).  Define
\begin{equation}
\label{eq:spherical-join-coordinates}
 F_{k,\ell}\colon \Sph^k*_{\mathrm{sph}}\Sph^\ell
 \longrightarrow \Sph^{k+\ell+1},
 \qquad
 [x,y,\theta]
 \longmapsto
 (x\cos\theta,y\sin\theta)
 .
\end{equation}
The formula respects the join identifications: at \(\theta=0\) its value is
independent of \(y\), and at \(\theta=\tfrac{\pi}{2}\) it is independent of
\(x\).  It therefore defines a bijection from the join quotient onto the
round sphere.  Let
\(p=[x,y,\theta]\) and \(p'=[x',y',\phi]\).  Using the round metrics fixed
in \Cref{sec:preliminaries}, the spherical cosine identity and
\eqref{eq:spherical-metric-join-definition} give
\begin{equation}
\label{eq:spherical-join-cosine}
 \begin{split}
 \cos d_{k+\ell+1}\bigl(F_{k,\ell}(p),F_{k,\ell}(p')\bigr)
 &=
 \cos\theta\cos\phi\,\cos d_k(x,x')
 +
 \sin\theta\sin\phi\,\cos d_\ell(y,y')\\
 &=\cos d_{\mathrm{sph}}(p,p').
 \end{split}
\end{equation}

Both distances in \eqref{eq:spherical-join-cosine} belong to
\([0,\pi]\), where cosine is injective.  Hence \(F_{k,\ell}\) is an
isometry, as also recorded in
\cite[Corollary~I.5.16]{bridson1999metric}.  It also intertwines the
diagonal involution
\([x,y,\theta]\longmapsto[-x,-y,\theta]\)
with the antipodal involution of \(\Sph^{k+\ell+1}\).  In other words,
\(\Sph^k*_{\mathrm{sph}}\Sph^\ell\) and \(\Sph^{k+\ell+1}\) are
\(C_2\)-equivariantly isometric: the isometry commutes with the specified
involutions.  In particular, the topological suspension
\(\Sigma\Sph^k=\Sph^k*\Sph^0\), equipped with the spherical-join metric,
is \(C_2\)-equivariantly isometric to \(\Sph^{k+1}\).

\subsection{Metric-thickening realization}

We lift the spherical-join coordinates to metric thickenings by
pushing product measures forward under the latitude maps \(j_\theta\).

\begin{definition}[Synchronized product-measure
spherical join]
\label{def:product-measure-join}
For \(0\leq\theta\leq\tfrac{\pi}{2}\), let
\(j_\theta\colon\Sph^k\times\Sph^\ell\longrightarrow\Sph^{k+\ell+1}\),
\(j_\theta(x,y):=(x\cos\theta,y\sin\theta)\).
For \(0\leq r,s\leq\pi\), define
\[
 \mathcal J_{r,s}\colon
 \VRm(\Sph^k;r)*\VRm(\Sph^\ell;s)
 \longrightarrow
 \VRm(\Sph^{k+\ell+1};\max\{r,s\}).
\]
For
\(\mu=\sum_i\alpha_i\delta_{x_i}\in\VRm(\Sph^k;r)\) and
\(\nu=\sum_j\beta_j\delta_{y_j}\in\VRm(\Sph^\ell;s)\), define their
\emph{synchronized product-measure spherical join at parameter
\(\theta\)}, and hence the value of \(\mathcal J_{r,s}\), by
\begin{equation}
\label{eq:binary-metric-join-formula}
\begin{aligned}
 \mu *_\theta\nu
 &:=(j_\theta)_\#(\mu\otimes\nu)
 =\sum_{i,j}\alpha_i\beta_j
 \delta_{(x_i\cos\theta,y_j\sin\theta)},\\
 \mathcal J_{r,s}[\mu,\nu,\theta]
 &:=\mu *_\theta\nu.
\end{aligned}
\end{equation}
\end{definition}

The term \emph{synchronized} refers to the use of the
same parameter \(\theta\) both as the join coordinate of
\([\mu,\nu,\theta]\) and in the latitude map \(j_\theta\) applied to
every pair of support points.  This parallels the synchronized
spherical join of correspondences introduced by Kim--Lim--M\'emoli
\cite[Definition~1.1]{kim2026consecutive}.

Let \(\iota_k(x):=(x,0),\quad \iota_\ell(y):=(0,y)\).
Because \(\mu\) and \(\nu\) are probability measures,
\(\mu *_0\nu=(\iota_k)_\#\mu,\quad
\mu *_{\tfrac{\pi}{2}}\nu=(\iota_\ell)_\#\nu\).
The first measure is therefore independent of \(\nu\), and the second is
independent of \(\mu\), exactly as required by the two endpoint
identifications in the join quotient.

Now put \(R:=\max\{r,s\}\).  Any two points in the support of
\(\mu *_\theta\nu\) have the form
\(z=(x\cos\theta,y\sin\theta),\quad
z'=(x'\cos\theta,y'\sin\theta)\),
where \(x,x'\in\operatorname{supp}(\mu)\) and
\(y,y'\in\operatorname{supp}(\nu)\).  Hence
\[
 \begin{aligned}
 \cos d_{k+\ell+1}(z,z')
 &=\cos^2\theta\cos d_k(x,x')
   +\sin^2\theta\cos d_\ell(y,y')\\
 &\geq \cos R.
 \end{aligned}
\]
Since \(d_{k+\ell+1}(z,z')\) and \(R\) lie in \([0,\pi]\), it follows that
\(d_{k+\ell+1}(z,z')\leq R\).  Therefore
\[
 \operatorname{diam}\!\left(\operatorname{supp}(\mu *_\theta\nu)\right)
 \leq R.
\]
This proves that the map \(\mathcal J_{r,s}\) has the
stated codomain.  The finite theorem below proves continuity and oddness
simultaneously for any finite number of factors.

For spaces \(X_1,\ldots,X_q\), we realize \(\bigast_{i=1}^qX_i\) as
the quotient of
\(\{(\lambda,x)\in[0,1]^q\times\prod_iX_i:
\sum_i\lambda_i^2=1\}\), where \((\lambda,x)\sim(\lambda',x')\)
precisely when \(\lambda=\lambda'\) and \(x_i=x_i'\) for every \(i\)
with \(\lambda_i>0\).  Thus the \(i\)-th coordinate is ignored when
\(\lambda_i=0\).  The usual barycentric weights are
\(t_i=\lambda_i^2\), so \(\sum_i t_i=1\); for \(C_2\)-spaces, the
action is diagonal.

\begin{theorem}[Finite join map for metric
thickenings]
\label{thm:finite-metric-join}
Let \(q\geq1\).  For \(1\leq i\leq q\), let \(m_i\geq0\) and
\(0\leq r_i\leq\pi\), and put
\(M:=\sum_{i=1}^q(m_i+1)-1,\quad
R:=\max_{1\leq i\leq q}r_i\).
There is a natural continuous odd map
\[
 \bigast_{i=1}^q\VRm(\Sph^{m_i};r_i)
 \longrightarrow
 \VRm(\Sph^M;R).
\]
In the spherical coordinates above, represent a point by
\((\lambda_i,\mu_i)_{i=1}^q\).  If
\(\mu_i=\sum_{j_i}\alpha_{i,j_i}\delta_{x_{i,j_i}}
\in\VRm(\Sph^{m_i};r_i)\),
then this map sends the represented point to
\[
 \sum_{j_1,\ldots,j_q}
 \left(\prod_{i=1}^q\alpha_{i,j_i}\right)
 \delta_{(x_{1,j_1}\lambda_1,\ldots,x_{q,j_q}\lambda_q)}.
\]
\end{theorem}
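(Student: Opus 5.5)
Write \(\Phi\) for the map defined by the displayed formula: on representatives \(((\lambda_i)_i,(\mu_i)_i)\) with \(\sum_i\lambda_i^2=1\), put \(\Phi(\lambda,\mu)=(j_\lambda)_\#(\mu_1\otimes\cdots\otimes\mu_q)\), where \(j_\lambda\colon\prod_i\Sph^{m_i}\to\Sph^M\subseteq\prod_i\mathbb R^{m_i+1}\) is \(j_\lambda(x_1,\ldots,x_q)=(x_1\lambda_1,\ldots,x_q\lambda_q)\). I will verify four things in order: well-definedness on the join quotient, the codomain, oddness, and continuity.

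\textbf{Well-definedness.} The point \(j_\lambda(x)\) has norm \(1\) because \(\sum_i\lambda_i^2\lVert x_i\rVert^2=1\). For \(\Sph^0\) factors we use \(p_0=1\) and \(q_0=-1\). If \(\lambda_i=0\), the \(i\)-th block of \(j_\lambda(x)\) is \(0\) regardless of \(x_i\). Since \(\mu_i\) is a probability measure, integrating out the \(i\)-th coordinate shows that \(\Phi(\lambda,\mu)\) does not depend on \(\mu_i\). Hence \(\Phi\) respects the identifications.

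\textbf{Codomain.} For support points \(z=j_\lambda(x)\) and \(z'=j_\lambda(x')\) with \(x_i,x_i'\in\operatorname{supp}\mu_i\), I compute
\[
 \cos d_M(z,z')=\sum_i\lambda_i^2\cos d_{m_i}(x_i,x_i')\geq\sum_i\lambda_i^2\cos r_i\geq\cos R .
\]
Injectivity of cosine on \([0,\pi]\) then gives \(d_M(z,z')\leq R\), exactly as in the binary computation above.

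\textbf{Oddness.} The antipodal action on the join is diagonal and leaves \(\lambda\) fixed, and \(j_\lambda(-x_1,\ldots,-x_q)=-j_\lambda(x)\). Pushing forward commutes with products, so \(\Phi(\lambda,(a_\#\mu_i)_i)=a_\#\Phi(\lambda,\mu)\).

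\textbf{Continuity (main obstacle).} The issue is that the join is a quotient whose identifications collapse the \(\mu_i\) with \(\lambda_i=0\), so Wasserstein estimates must be made uniformly near those strata. I would prove continuity of \(\Phi\) on the compact-type space \(\{\lambda\}\times\prod_i\VRm(\Sph^{m_i};r_i)\) by a \(W_1\) estimate; the quotient property then gives continuity on the join. Note that the join has the quotient topology, and a continuous map out of the prequotient that is constant on fibers descends, so no compactness is needed for that step. Work with chordal distance, which is bi-Lipschitz equivalent to the geodesic one (\(\lVert x-x'\rVert\leq d\leq\tfrac\pi2\lVert x-x'\rVert\)), so \(W_1\) can be computed in either. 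Given \((\lambda,\mu)\) and \((\lambda',\mu')\), take optimal couplings \(\gamma_i\) of \(\mu_i\) and \(\mu_i'\), and couple the two images via \((j_\lambda\times j_{\lambda'})_\#(\bigotimes_i\gamma_i)\). Since \(\lVert j_\lambda(x)-j_{\lambda'}(x')\rVert\leq\sum_i\bigl(|\lambda_i-\lambda_i'|\,\lVert x_i\rVert+\lambda_i'\lVert x_i-x_i'\rVert\bigr)\leq\sum_i\bigl(|\lambda_i-\lambda_i'|+\lVert x_i-x_i'\rVert\bigr)\), this yields
\[
 W_1\bigl(\Phi(\lambda,\mu),\Phi(\lambda',\mu')\bigr)\leq C\sum_i\bigl(|\lambda_i-\lambda_i'|+W_1(\mu_i,\mu_i')\bigr),
\]
which is a Lipschitz bound on the prequotient. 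To see that the topology on the join is the right one near the degenerate strata, use the refined bound with \(\lambda_i'\lVert x_i-x_i'\rVert\leq 2\lambda_i'\). A sequence converging in the join to a point with \(\lambda_i=0\) has \(\lambda_i'\to0\), so the \(i\)-th term tends to \(0\) regardless of \(\mu_i'\), and hence \(\Phi\) is continuous even for a coarser, metric-type join topology. Combined with the three checks above, \(\Phi\) is a continuous odd map of the stated form, which proves the theorem.
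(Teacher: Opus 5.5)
Your proposal is correct and follows the paper's proof essentially step for step. It uses the same pushforward-of-product-measure definition, the same observation that \(\lambda_i=0\) makes the output independent of \(\mu_i\), the same cosine computation for the support diameter, and the same equivariance \(j_\lambda(-x)=-j_\lambda(x)\). The one difference is in the continuity step. You prove an explicit Lipschitz bound using product couplings \((j_\lambda\times j_{\lambda'})_\#(\bigotimes_i\gamma_i)\); the paper instead appeals to weak continuity of pushforwards under a jointly continuous map, plus the fact that the \(1\)-Wasserstein and weak topologies agree on compact spheres. Your version is slightly more quantitative. Your closing remark about a coarser join topology is correct but unnecessary, since the paper's join carries the quotient topology.
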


\begin{proof}

For \(\lambda=(\lambda_1,\ldots,\lambda_q)\), let
\[
 F_\lambda\colon\prod_{i=1}^q\Sph^{m_i}\longrightarrow\Sph^M,
 \qquad
 F_\lambda(x_1,\ldots,x_q)
 :=(x_1\lambda_1,\ldots,x_q\lambda_q).
\]
The displayed formula is precisely
\((F_\lambda)_\#(\bigotimes_i\mu_i)\).  In particular, it depends on
the measures themselves, not on chosen atomic presentations.

For two points in its support,
\[
 \begin{split}
 &\cos d_M\bigl(
   F_\lambda(x_1,\ldots,x_q),
   F_\lambda(x_1',\ldots,x_q')
  \bigr)\\
 &=
 \sum_{i=1}^q\lambda_i^2\cos d_{m_i}(x_i,x_i')\\
 &\geq
 \sum_{i=1}^q\lambda_i^2\cos r_i
 \geq
 \sum_{i=1}^q\lambda_i^2\cos R
 =\cos R.
 \end{split}
\]
Thus the support diameter is at most \(R\).  If \(\lambda_i=0\), the
output is independent of \(\mu_i\), so the formula respects the join
identifications.

The product measure \(\bigotimes_i\mu_i\) varies continuously in the
weak topology, and \((\lambda,x_1,\ldots,x_q)\mapsto
F_\lambda(x_1,\ldots,x_q)\) is continuous on a compact space.
Pushforward therefore varies continuously in \(\lambda\) and in all
the \(\mu_i\).  Since the 1-Wasserstein topology agrees with weak
convergence on each compact sphere
\cite[Theorem~6.9]{villani2009optimal}, the resulting
prequotient product-measure map is jointly continuous.  It is constant
on the equivalence classes defining the finite join, so the quotient
universal property gives the required continuous descended map.
Finally,
\(F_\lambda(-x_1,\ldots,-x_q)=-F_\lambda(x_1,\ldots,x_q)\),
so simultaneous application of the antipodal involutions to the
factors sends the output measure to its antipodal image.  The map is
odd.

\end{proof}

\begin{corollary}[Binary metric-thickening join map]
\label{cor:binary-metric-join}
Let \(k,\ell\geq0\), let \(0\leq r,s\leq\pi\), and put
\(R:=\max\{r,s\}\).  The map \(\mathcal J_{r,s}\) in
\Cref{def:product-measure-join}, defined by
\eqref{eq:binary-metric-join-formula}, is a natural continuous odd map
into \(\VRm(\Sph^{k+\ell+1};R)\).
\end{corollary}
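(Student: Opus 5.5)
The plan is to obtain \Cref{cor:binary-metric-join} as the case \(q=2\) of \Cref{thm:finite-metric-join}, with the factors \((m_1,r_1)=(k,r)\) and \((m_2,r_2)=(\ell,s)\). Then \(M=(k+1)+(\ell+1)-1=k+\ell+1\) and the scale is \(\max\{r,s\}=R\), so the theorem supplies a continuous odd map \(\VRm(\Sph^k;r)*\VRm(\Sph^\ell;s)\to\VRm(\Sph^{k+\ell+1};R)\). The only remaining work is to show that, after the two models of the binary join are identified, this map coincides with \(\mathcal J_{r,s}\) as defined in \eqref{eq:binary-metric-join-formula}.

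For that identification, I would relate the angular join coordinates \([\mu,\nu,\theta]\), with \(\theta\in[0,\tfrac\pi2]\), to the spherical-weight coordinates \((\lambda_1,\lambda_2)\) by setting \(\lambda_1=\cos\theta\) and \(\lambda_2=\sin\theta\). This gives a homeomorphism from \([0,\tfrac\pi2]\) onto \(\{\lambda\in[0,1]^2:\lambda_1^2+\lambda_2^2=1\}\). Under it, the endpoint identifications correspond: at \(\theta=0\) we have \(\lambda_2=0\), so the second coordinate is ignored, and at \(\theta=\tfrac\pi2\) we have \(\lambda_1=0\), so the first is ignored. It therefore descends to a \(C_2\)-equivariant homeomorphism between the two quotient descriptions of the join, and this homeomorphism commutes with the diagonal actions because it does not touch the measure coordinates.

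Next, I would substitute into the formula of \Cref{thm:finite-metric-join}. With \(\lambda=(\cos\theta,\sin\theta)\), the map \(F_\lambda\) becomes exactly \(j_\theta\), and the output measure \(\sum_{i,j}\alpha_i\beta_j\delta_{(x_i\cos\theta,\,y_j\sin\theta)}\) is exactly \(\mu*_\theta\nu\). Hence \(\mathcal J_{r,s}\) is the composite of this homeomorphism with the map of the theorem, and so it is continuous and odd. The codomain bound was already checked directly before the theorem, and it also follows from the theorem. Naturality is inherited in the same way, because the formula is a pushforward of the product measure and does not depend on the chosen atomic presentations.

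I expect no real obstacle here. The one point that needs care is confirming that the reparametrization \(\theta\mapsto(\cos\theta,\sin\theta)\) is compatible with both quotient relations, so that it is a homeomorphism of joins and not just a bijection. This holds because it is a continuous bijection from a compact space to a Hausdorff space, and it carries equivalence classes to equivalence classes.
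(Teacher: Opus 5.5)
Your proposal is correct and is exactly the paper's proof: the corollary is \Cref{thm:finite-metric-join} with $q=2$, $\lambda_1=\cos\theta$, $\lambda_2=\sin\theta$, so that $F_\lambda=j_\theta$ and the output measure is $\mu*_\theta\nu$.

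One justification needs repair. The compact-to-Hausdorff argument in your last paragraph does not apply, because $\VRm(\Sph^k;r)$ is not compact when $k\geq1$ and $r>0$. For example, uniform measures on $n$ points of a short arc converge in the $1$-Wasserstein metric to a measure with infinite support. That argument is also unnecessary. The map $\theta\mapsto(\cos\theta,\sin\theta)$ gives a homeomorphism of the prequotient spaces that matches the two equivalence relations exactly. Hence it and its inverse both descend by the universal property of the quotient, and continuity of $\mathcal J_{r,s}$ needs only the forward direction.
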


\begin{proof}
Apply \Cref{thm:finite-metric-join} with \(q=2\),
\(\lambda_1=\cos\theta\), and \(\lambda_2=\sin\theta\).
\end{proof}

\begin{remark}[Comparison with convex interpolation]
\label{rem:no-pi-over-two-loss}

A natural alternative would interpolate the endpoint measures by
\((1-t)(\iota_k)_\#\mu+t(\iota_\ell)_\#\nu\), \(0<t<1\).
Its support contains both orthogonal supports, so its diameter is at least
\(\tfrac{\pi}{2}\).  It therefore does not preserve scales below
\(\tfrac{\pi}{2}\).  In contrast, the preceding calculation shows that
\(\mu *_\theta\nu\) has support diameter at most \(\max\{r,s\}\).

\end{remark}

\begin{corollary}[Metric-thickening suspension map]

Every measure in \(\VRm(\Sph^0;0)\) has singleton support and therefore
equals \(\delta_p\) for a unique \(p\in\Sph^0\).  Consequently, the
assignment
\(\Sph^0\longrightarrow\VRm(\Sph^0;0)\),
\(p\longmapsto\delta_p\),
is a \(C_2\)-equivariant isometry.  For every \(m\geq0\) and
\(0\leq r\leq\pi\), it yields a natural odd map
\begin{equation}
\label{eq:metric-suspension-map}
 \mathcal S_r\colon
 \Sigma\VRm(\Sph^m;r)
 =
 \VRm(\Sph^m;r)*\Sph^0
 \longrightarrow
 \VRm(\Sph^{m+1};r).
\end{equation}
Explicitly,
\[
 \mathcal S_r
 \left[
  \sum_i\alpha_i\delta_{x_i},p,\theta
 \right]
 =
 \sum_i\alpha_i
 \delta_{(x_i\cos\theta,p\sin\theta)},
 \qquad p\in\Sph^0.
\]

\end{corollary}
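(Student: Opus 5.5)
The plan is to specialize the binary join of \Cref{cor:binary-metric-join} to the case where the second factor is \(\Sph^0\) at scale \(0\), then precompose with the identification of \(\Sph^0\) with \(\VRm(\Sph^0;0)\). Each claim of the corollary then becomes a direct check.

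\emph{Step 1: the zero-sphere at scale zero.} A finitely supported probability measure on \(\Sph^0\) whose support has diameter at most \(0\) has exactly one support point. Since \(d_0(p_0,q_0)=\pi>0\), the only such measures are \(\delta_{p_0}\) and \(\delta_{q_0}\). The \(1\)-Wasserstein distance between these two Dirac masses is \(d_0(p_0,q_0)=\pi\). So \(p\mapsto\delta_p\) is a bijective isometry \(\Sph^0\to\VRm(\Sph^0;0)\). It intertwines the involutions because \(a_\#\delta_p=\delta_{a(p)}\).

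\emph{Step 2: the induced map on joins.} The join is functorial for continuous maps, and it is equivariant for the diagonal action. Hence the isometry of Step 1 gives a \(C_2\)-homeomorphism
\[
 \VRm(\Sph^m;r)*\Sph^0\cong\VRm(\Sph^m;r)*\VRm(\Sph^0;0).
\]
Compose this with \(\mathcal J_{r,0}\) from \Cref{cor:binary-metric-join}, taking \(k=m\) and \(\ell=0\). Since \(\max\{r,0\}=r\) and \(m+0+1=m+1\), the target is \(\VRm(\Sph^{m+1};r)\). A composite of continuous odd maps is continuous and odd, and this defines \(\mathcal S_r\).

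\emph{Step 3: the explicit formula.} Take \(\nu=\delta_p\) in \eqref{eq:binary-metric-join-formula}. The product measure is then \(\sum_i\alpha_i\delta_{(x_i,p)}\), and its pushforward under \(j_\theta\) is \(\sum_i\alpha_i\delta_{(x_i\cos\theta,p\sin\theta)}\), using \(p\in\{\pm1\}\subset\mathbb R\). This matches the displayed formula.

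The only point that needs any care is Step 1: one must check that the scale-\(0\) thickening of \(\Sph^0\) is exactly the two Dirac masses, with no mixtures. The closed diameter convention settles this, since a mixture has support \(\{p_0,q_0\}\) of diameter \(\pi>0\). Everything else is inherited from \Cref{thm:finite-metric-join}.
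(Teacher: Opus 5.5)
Your proposal is correct and follows the paper's proof essentially verbatim. The paper also composes \(\id*(p\mapsto\delta_p)\) with \(\mathcal J_{r,0}\) from \Cref{cor:binary-metric-join} (taking \(\ell=0\), \(s=0\)) and reads off the explicit formula from \eqref{eq:binary-metric-join-formula}; your Step~1 simply spells out the singleton-support and isometry checks that the paper treats as evident.
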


\begin{proof}

Apply \Cref{cor:binary-metric-join} with \(\ell=0\) and \(s=0\) to the
composite
\[
 \VRm(\Sph^m;r)*\Sph^0
 \xrightarrow{\ \id*(p\mapsto\delta_p)\ }
 \VRm(\Sph^m;r)*\VRm(\Sph^0;0)
 \xrightarrow{\ \mathcal J_{r,0}\ }
 \VRm(\Sph^{m+1};r).
\]
Formula \eqref{eq:binary-metric-join-formula} gives the displayed
formula.

\end{proof}

The corresponding spherical suspension of maps is used
by Lim--M\'emoli--Smith
\cite[Section~4, especially Lemma~4.3]{lim2021gromov}; Kim--Lim--M\'emoli
define its synchronized correspondence analogue
\cite[Definition~2.2]{kim2026consecutive}.  The map above is the
metric-thickening realization.

\begin{remark}[The suspension map need not be a homotopy equivalence]
\label{rem:no-suspension-equivalence}

In general, \(\mathcal S_r\) is not a homotopy equivalence.  If
\(m=0\) and
\(\tfrac{2\pi}{3}<r<\tfrac{4\pi}{5}\), then
\(\Sigma\VRm(\Sph^0;r)=\Sigma\Sph^0\cong\Sph^1,\quad
\VRm(\Sph^1;r)\simeq\Sph^3\);
see \cite[Theorem~1]{moy2023circle}.  Hence the map
\(\mathcal S_r\colon\Sph^1\to\VRm(\Sph^1;r)\) cannot be a homotopy
equivalence.

\end{remark}

\subsection{\texorpdfstring{Coindex and the max law for the
\(c\)-matrix}{Coindex and the max law for the c-matrix}}
\label{sec:c-consequences}

We now pass directly from the scale-preserving metric join to
coindex and then to the critical thresholds.
The join map first gives a scale-by-scale statement.

\begin{corollary}[Superadditivity of coindex under spherical joins]
\label{cor:coindex-superadditivity}

Suppose that, for \(1\leq i\leq k\), there is an odd map
\(\Sph^{n_i}\longrightarrow\VRm(\Sph^{m_i};r_i)\).
Define
\(M:=\sum_{i=1}^k(m_i+1)-1,\quad
R:=\max_{1\leq i\leq k}r_i\).
Then there is an odd map
\[
 \Sph^{\,\sum_i(n_i+1)-1}
 \longrightarrow
 \VRm(\Sph^M;R).
\]
Equivalently, whenever the displayed coindices are finite,
\[
 \coind\bigl(\VRm(\Sph^M;R)\bigr)
 \geq
 \sum_{i=1}^k
 \left(\coind\bigl(\VRm(\Sph^{m_i};r_i)\bigr)+1\right)-1.
\]

\end{corollary}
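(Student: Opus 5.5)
The plan is to compose the given odd maps with the finite join map of \Cref{thm:finite-metric-join}. Fix odd continuous maps \(g_i\colon\Sph^{n_i}\to\VRm(\Sph^{m_i};r_i)\) for \(1\leq i\leq k\). The join is functorial for continuous maps: the assignment \([\lambda,(y_i)_i]\mapsto[\lambda,(g_i(y_i))_i]\) respects the identifications, which ignore coordinates with \(\lambda_i=0\). It is continuous, since it is induced by a continuous map on the prequotient and descends through the quotient. It is odd because the action is diagonal and each \(g_i\) is odd. This gives an odd map \(\bigast_i g_i\colon\bigast_{i=1}^k\Sph^{n_i}\to\bigast_{i=1}^k\VRm(\Sph^{m_i};r_i)\). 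Composing with the odd map of \Cref{thm:finite-metric-join} lands in \(\VRm(\Sph^M;R)\).

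It remains to identify the source with \(\Sph^{N}\), where \(N=\sum_i(n_i+1)-1\), by an odd homeomorphism. For this I use the map \([\lambda,(y_i)]\mapsto(\lambda_1y_1,\ldots,\lambda_ky_k)\in\Sph^N\), the \(q\)-fold analogue of \(F_{k,\ell}\) in \eqref{eq:spherical-join-coordinates}. It is well defined on the quotient, continuous, and bijective: \(\lambda_i\) is recovered as the norm of the \(i\)-th block, and \(y_i\) as that block normalized whenever \(\lambda_i>0\). Since the source is compact and \(\Sph^N\) is Hausdorff, it is a homeomorphism, and it visibly commutes with the antipodal actions. Alternatively, one can iterate the binary isometry \(F_{k,\ell}\) together with associativity of joins. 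Precomposing with the inverse of this homeomorphism gives the odd map \(\Sph^N\to\VRm(\Sph^M;R)\).

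For the coindex reformulation, suppose each \(\coind(\VRm(\Sph^{m_i};r_i))=:n_i\) is finite. The supremum in \eqref{eq:coindex-intro} is attained, because an odd map from \(\Sph^n\) restricts along the equator to an odd map from every lower-dimensional sphere. So odd maps \(\Sph^{n_i}\to\VRm(\Sph^{m_i};r_i)\) exist, and the first part yields \(\coind(\VRm(\Sph^M;R))\geq N\). Conversely, the inequality with the \(n_i\) chosen as the coindices recovers the existence statement for any smaller \(n_i\), again by equatorial restriction. This is the sense in which the two formulations are equivalent.

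The only point needing care is the continuity of the joined map \(\bigast_i g_i\) and of the sphere identification as maps of quotient spaces. Both follow from the universal property of the quotient, since \([0,1]^k\) and the products involved are compact Hausdorff; this is the same argument used in the proof of \Cref{thm:finite-metric-join}. I do not expect any genuine obstacle.
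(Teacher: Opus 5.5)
Your proposal is correct and is essentially the paper's proof: join the given odd maps, identify \(\bigast_{i}\Sph^{n_i}\) equivariantly with \(\Sph^{\sum_i(n_i+1)-1}\) (your direct block map is the unrolled form of iterating \(F_{k,\ell}\)), and compose with the map of \Cref{thm:finite-metric-join}. Two harmless imprecisions. First, a finite coindex is attained simply because a nonempty set of integers with finite supremum contains that supremum; equatorial restriction gives downward closure, not attainment. Second, continuity of \(\bigast_i g_i\) needs only the quotient universal property, not compactness. Indeed \(\VRm(\Sph^{m_i};r_i)\) is generally not compact, so compactness matters only for the sphere identification being a homeomorphism.
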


\begin{proof}
Join the given maps, iteratively use the \(C_2\)-equivariant isometry
\eqref{eq:spherical-join-coordinates} to identify
\(\bigast_{i=1}^k\Sph^{n_i}\) with
\(\Sph^{\,\sum_i(n_i+1)-1}\),
and compose with the map in \Cref{thm:finite-metric-join}.
\end{proof}

The finite join map implies a max inequality for the obstruction
thresholds \(c_{m,n}\): the joined scale is the maximum of the input
scales, while the source and target dimensions add after shifting them by
one.  We now state this consequence.

\begin{theorem}[Finite join inequality]
\label{thm:c-finite-join}
Let \(k\geq1\), and let \(0\leq m_i\leq n_i\).  Put
\(M:=\sum_{i=1}^k(m_i+1)-1,\quad
N:=\sum_{i=1}^k(n_i+1)-1\).
Then
\[
 c_{M,N}
 \leq
 \max_{1\leq i\leq k}c_{m_i,n_i}.
\]
\end{theorem}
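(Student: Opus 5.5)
The plan is to reduce the statement to \Cref{cor:coindex-superadditivity} via the metric-thickening description of the thresholds, \eqref{eq:c-metric-equality}. Put \(c^*:=\max_i c_{m_i,n_i}\). If \(c^*=\pi\), then \(\VRm(\Sph^M;\pi)\) contains an antipodally fixed point (e.g.\ the uniform measure on a pair \(\{z,-z\}\)), so its coindex is infinite and \(c_{M,N}\leq\pi=c^*\) trivially. Otherwise \(c^*<\pi\), and it suffices to prove \(c_{M,N}\leq R\) for every \(R\) with \(c^*<R<\pi\), and then let \(R\downarrow c^*\).

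Fix such an \(R\). For each \(i\) we have \(c_{m_i,n_i}<R\). By the definition of the infimum, there is a scale \(r_i<R\) with \(\coind\VRm(\Sph^{m_i};r_i)\geq n_i\). Since the definition of coindex is a supremum, I must be careful to obtain an actual odd map \(\Sph^{n_i}\to\VRm(\Sph^{m_i};r_i)\). If the coindex is attained or exceeds \(n_i\), there is an odd map \(\Sph^{k}\to\VRm(\Sph^{m_i};r_i)\) for some \(k\geq n_i\), and precomposing with the equatorial inclusion \(\Sph^{n_i}\hookrightarrow\Sph^k\) gives one from \(\Sph^{n_i}\). If the supremum is \(+\infty\), odd maps from arbitrarily large spheres exist, so the same argument applies. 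Next, the inclusion \(\VRm(\Sph^{m_i};r_i)\hookrightarrow\VRm(\Sph^{m_i};R)\) is odd and continuous, being an isometric inclusion of Wasserstein spaces. This yields odd maps \(\Sph^{n_i}\to\VRm(\Sph^{m_i};R)\) for every \(i\), all at the common scale \(R\).

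Now apply \Cref{cor:coindex-superadditivity} with these maps, where every \(r_i=R\). It produces an odd map \(\Sph^{N}\to\VRm(\Sph^M;R)\) with \(N=\sum_i(n_i+1)-1\). Hence \(\coind\VRm(\Sph^M;R)\geq N\), so \(c^{\mathrm{met}}_{M,N}\leq R\). Using \eqref{eq:c-metric-equality}, we get \(c_{M,N}\leq R\), and letting \(R\downarrow c^*\) finishes the proof. The hypothesis \(m_i\leq n_i\) only ensures \(M\leq N\), so that \(c_{M,N}\) is defined.

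The only delicate point is the one already flagged: the infimum defining \(c_{m_i,n_i}\) need not be attained, and the coindex inequality \(\geq n_i\) must be converted into an honest odd map from \(\Sph^{n_i}\). Working at a scale \(R\) strictly above \(c^*\), together with monotonicity in the source dimension and in the scale, handles both issues. The joining step itself, namely identifying \(\bigast_i\Sph^{n_i}\) equivariantly with \(\Sph^N\) and composing with \Cref{thm:finite-metric-join}, is already packaged in \Cref{cor:coindex-superadditivity}.
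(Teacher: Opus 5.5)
Your proposal is correct and is essentially the paper's proof. Both arguments choose scales slightly above each \(c_{m_i,n_i}\) at which odd maps into the metric thickenings exist (via \eqref{eq:c-metric-equality}), join these maps and compose with \Cref{thm:finite-metric-join} (which you use in its packaged form \Cref{cor:coindex-superadditivity}), and then let the slack tend to zero. Your extra steps are harmless but unnecessary, because the join map already outputs at the maximum of the input scales: the separate case \(c^*=\pi\), pushing every map to a common scale \(R\), and extracting an honest odd map from the coindex supremum.
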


\begin{remark}[Two proofs of the finite join inequality]
The proof below uses the finite join map for
Vietoris--Rips metric thickenings.  The function-level realization in
\Cref{sec:modulus}, specifically \Cref{prop:join-modulus}, gives a second
proof: its finite form
\eqref{eq:finite-function-join-modulus}, combined with the
optimal-modulus characterization \eqref{eq:c-as-optimal-modulus},
recovers the same inequality.
\end{remark}

\begin{proof}[Proof via metric thickenings]
Fix \(\eps>0\).  By the metric-thickening characterization
\eqref{eq:c-metric-equality}, for each \(i\) there is a scale
\(r_i<c_{m_i,n_i}+\eps\)
and an odd map
\(f_i\colon\Sph^{n_i}\longrightarrow\VRm(\Sph^{m_i};r_i)\).
No attainment of the infimum defining \(c_{m_i,n_i}\) is required.
Joining the maps \(f_i\) and composing with
\Cref{thm:finite-metric-join} gives an odd map
\[
 \Sph^N
 \longrightarrow
 \VRm\left(\Sph^M;\max_i r_i\right).
\]
{Therefore
\(c_{M,N}=c^{\mathrm{met}}_{M,N}\leq\max_i r_i
<\max_i c_{m_i,n_i}+\eps\).}
Letting \(\eps\downarrow0\) proves the result.
\end{proof}

The binary case is worth displaying separately:

\[
 c_{m_1+m_2+1,n_1+n_2+1}
 \leq
 \max\{c_{m_1,n_1},c_{m_2,n_2}\}
 \qquad(m_1\leq n_1,\ m_2\leq n_2).
\]

We first record the diagonal consequence of the max inequality.

\begin{corollary}[Diagonal monotonicity]
\label{cor:c-diagonal-monotonicity}

For all \(0\leq m\leq n\) and every integer \(d\geq0\),
\[
 c_{m+d,n+d}\leq c_{m,n}.
\]
In particular, for every integer \(q\geq0\), the sequence
\(m\longmapsto c_{m,m+q}\)
is nonincreasing.

\end{corollary}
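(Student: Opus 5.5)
The plan is to apply the binary case of \Cref{thm:c-finite-join} with the second factor equal to the diagonal pair \((0,0)\), and then iterate in \(d\). For \(d=0\) there is nothing to prove, so everything reduces to the single step \(d=1\):
\[
 c_{m+1,n+1}\leq c_{m,n}.
\]
Once this is established, induction on \(d\) gives the general statement.

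For the step \(d=1\), take \(k=2\) in \Cref{thm:c-finite-join}, with \((m_1,n_1)=(m,n)\) and \((m_2,n_2)=(0,0)\). This gives
\[
 M=(m+1)+1-1=m+1,\qquad N=(n+1)+1-1=n+1.
\]
The theorem then yields
\[
 c_{m+1,n+1}\leq\max\{c_{m,n},c_{0,0}\}.
\]
It remains to see that \(c_{0,0}=0\). The identity map \(\Sph^0\to\VR(\Sph^0;0)\), sending each point to its vertex, is odd, so the coindex at scale \(0\) is at least \(0\) and the infimum defining \(c_{0,0}\) is \(0\). This is also recorded as the diagonal zero in \Cref{fig:c-gh-matrix-overview}, and it is the same fact used when defining \(B_r\). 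Since \(c_{m,n}\geq0\), the maximum equals \(c_{m,n}\).

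An alternative route to the same step avoids \Cref{thm:c-finite-join}. Compose an odd map \(\Sph^n\to\VRm(\Sph^m;r)\) with the suspension map \(\mathcal S_r\), using \(\Sigma\Sph^n\cong\Sph^{n+1}\) equivariantly. This produces an odd map \(\Sph^{n+1}\to\VRm(\Sph^{m+1};r)\) for every admissible \(r\), and then \eqref{eq:c-metric-equality} transfers the bound back to \(c\).

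The ``in particular'' clause is the case \(n=m+q\). The inequality \(c_{m+d,m+d+q}\leq c_{m,m+q}\) for all \(d\geq0\) says exactly that \(m\mapsto c_{m,m+q}\) is nonincreasing.

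There is no serious obstacle here. The only point needing care is the degenerate factor \((0,0)\): one must check that \Cref{thm:c-finite-join} allows \(m_i=n_i=0\), which it does since it only requires \(0\leq m_i\leq n_i\), and that \(c_{0,0}=0\) rather than some positive value.
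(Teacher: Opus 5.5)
Your proposal is correct and matches the paper's proof: the paper applies the binary case of \Cref{thm:c-finite-join} to \((m,n)\) and \((0,0)\), uses \(c_{0,0}=0\), iterates in \(d\), and sets \(n=m+q\) for the final clause. It also notes your alternative suspension route as the map-level picture. In that route, \(\mathcal S_r\) is applied after suspending the odd map, i.e.\ to \(\Sigma f\).
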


\begin{proof}
Applying the binary case of \Cref{thm:c-finite-join} to \(c_{m,n}\) and
\(c_{0,0}=0\) gives \(c_{m+1,n+1}\leq c_{m,n}\).  At the map level,
this is the suspension map
\eqref{eq:metric-suspension-map}.
Iteration proves the displayed inequality, and setting \(n=m+q\) gives
the final assertion.
\end{proof}

An inequality of the form \(c_{m,n}\leq r\) will be called a
\emph{seed bound}.  For a seed bound \(c_{k,\ell}\leq r\), join powers
give the points \(j(k+1,\ell+1)\) in shifted coordinates; diagonal
shifts add \((d,d)\), and restriction fills the smaller second indices.
The following corollary records these consequences.

\begin{corollary}[Propagation of an entrywise bound]
\label{cor:c-seed-ray}

Suppose \(0\leq k\leq\ell\) and \(c_{k,\ell}\leq r\).
\begin{enumerate}[label=\textup{(\roman*)}]
\item For all integers \(j\geq1\) and \(d\geq0\),
\[
 c_{\,j(k+1)-1+d,\,j(\ell+1)-1+d}\leq r.
\]
\item If \(k<\ell\) and
\(1<\lambda<\tfrac{\ell+1}{k+1}\),
then, for all sufficiently large \(m\), one has \(c_{m,n}\leq r\) for
every \(n\geq m\) satisfying
\(\tfrac{n+1}{m+1}\leq\lambda\).
\end{enumerate}

\end{corollary}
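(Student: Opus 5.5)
Part (i) follows from \Cref{thm:c-finite-join} applied to $j$ copies of the same pair, followed by diagonal monotonicity. Take $k_i=k$ and $n_i=\ell$ for $1\le i\le j$ (so the theorem's $m_i$ is $k$). Then
\[
 M=j(k+1)-1,\qquad N=j(\ell+1)-1,
\]
and the theorem gives $c_{j(k+1)-1,\,j(\ell+1)-1}\le c_{k,\ell}\le r$. The requirement $m_i\le n_i$ holds since $k\le\ell$. Applying \Cref{cor:c-diagonal-monotonicity} with shift $d$ then yields (i).

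For (ii), write $K:=k+1$ and $L:=\ell+1$, so that $L/K>\lambda>1$. Fix $m$ large and $n\ge m$ with $(n+1)/(m+1)\le\lambda$. By row monotonicity ($c_{m,n}\le c_{m,n'}$ for $n\le n'$), it suffices to find some $n'\ge n$ with $c_{m,n'}\le r$. The plan is to pick $j$ and $d$ with $m=jK-1+d$, $d\ge0$. Part (i) then gives $c_{m,\,jL-1+d}\le r$, and it remains to check that $jL-1+d\ge n$. Take $j:=\lfloor (m+1)/K\rfloor$, so that
\[
 d=m+1-jK\in[0,K-1].
\]
Then
\[
 jL+d=(m+1)\tfrac{L}{K}-d\bigl(\tfrac{L}{K}-1\bigr)\ge (m+1)\tfrac{L}{K}-(K-1)\bigl(\tfrac{L}{K}-1\bigr).
\]
We need this to be at least $n+1$. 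Since $n+1\le\lambda(m+1)$, it is enough that
\[
 (m+1)\Bigl(\tfrac{L}{K}-\lambda\Bigr)\ge (K-1)\Bigl(\tfrac{L}{K}-1\Bigr).
\]
This holds for all $m$ beyond an explicit threshold depending only on $k$, $\ell$ and $\lambda$, because $L/K-\lambda>0$. Also $j\ge1$ once $m+1\ge K$. Case $n=m$ is trivial anyway since $c_{m,m}=0$.

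The only real subtlety is this bookkeeping in (ii), namely choosing $(j,d)$ so that the residual diagonal shift stays bounded while the target index overshoots $n$. The strict inequality $\lambda<(\ell+1)/(k+1)$ supplies the linear-in-$m$ slack that absorbs the bounded loss from $d$. No topological input beyond \Cref{thm:c-finite-join}, \Cref{cor:c-diagonal-monotonicity}, and row monotonicity is needed.
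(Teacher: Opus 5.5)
Your proof is correct and follows the paper's route. Part (i) is exactly the $j$-fold application of \Cref{thm:c-finite-join} followed by diagonal monotonicity, and part (ii) combines (i) with restriction along the equatorial inclusion; the only difference is in the bookkeeping. You take the maximal $j=\lfloor (m+1)/(k+1)\rfloor$, which yields a single target index $n'$ that serves every admissible $n$ at once. The paper instead takes the $n$-dependent minimal $j=\lceil (n-m)/(\ell-k)\rceil$ and then verifies $j(k+1)\le m+1$ uniformly in $n$.
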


\begin{proof}

Apply \Cref{thm:c-finite-join} to \(j\) copies of \((k,\ell)\), and
then apply \Cref{cor:c-diagonal-monotonicity}.  This proves \textup{(i)}.

For \textup{(ii)}, the case \(n=m\) is immediate because
\(c_{m,m}=0\leq r\), so assume \(n>m\).  Put
\(D:=\ell-k\), \(A:=m+1\), and
\(j:=\left\lceil\tfrac{n-m}{D}\right\rceil\).
Since \(\tfrac{n+1}{A}\leq\lambda\),
\[
 j(k+1)
 \leq
 \frac{(\lambda-1)(k+1)}{D}\,A+(k+1).
\]
The coefficient of \(A\) on the right is strictly smaller than one;
hence \(j(k+1)\leq A\), uniformly for all the indicated \(n\), once
\(A\) is sufficiently large.  Set \(d:=A-j(k+1)\).  By
part \textup{(i)},
\[
 c_{\,j(k+1)-1+d,\,j(\ell+1)-1+d}\leq r.
\]
The first index is \(m\), while the second is \(m+jD\geq n\).
Restricting an odd map on \(\Sph^{m+jD}\) along the odd isometric embedding
\(\Sph^n\longrightarrow\Sph^{m+jD}\), \(x\longmapsto(x,0)\),
shows that \(c_{m,n}\leq c_{m,m+jD}\leq r\).

\end{proof}

\begin{remark}[Ordinary Vietoris--Rips join]
\label{rem:ordinary-vr-join}
For \(t\geq\tfrac{\pi}{2}\), embedding \(\Sph^k\) and
\(\Sph^\ell\) as orthogonal great subspheres induces an injective odd
simplicial map
\[
 \VR(\Sph^k;t)*\VR(\Sph^\ell;t)
 \hookrightarrow
 \VR(\Sph^{k+\ell+1};t).
\]
Indeed, distances within each factor are unchanged and every distance
between the two factors equals \(\tfrac{\pi}{2}\).  Taking \(\ell=0\)
gives
\[
 \Sigma\VR(\Sph^k;t)
 \hookrightarrow
 \VR(\Sph^{k+1};t).
\]
\end{remark}

\endgroup

\section{\texorpdfstring{Circle thresholds and coindex
bounds}{Circle thresholds and coindex bounds}}
\label{sec:circle-coindex-bounds}

\begingroup
\setlength{\abovedisplayskip}{6pt plus 2pt minus 2pt}
\setlength{\belowdisplayskip}{6pt plus 2pt minus 2pt}
\setlength{\abovedisplayshortskip}{3pt plus 2pt minus 1pt}
\setlength{\belowdisplayshortskip}{4pt plus 2pt minus 1pt}

This section combines the exact thresholds for \(\Sph^1\) with the
finite join inequality to bound \(c_{m,n}\), and hence the coindex of
the Vietoris--Rips models of \(\Sph^m\).

\begin{definition}[Circle thresholds]
For \(\ell\geq1\), define
\(\rho_\ell:=\tfrac{2\pi\ell}{2\ell+1}\).
\end{definition}

The exact circle values from
\cite[Theorem~5.1]{adams2022gromov} are
\begin{equation}
\label{eq:circle-c-values}
 c_{1,2\ell}=c_{1,2\ell+1}
 =\rho_\ell,
 \qquad \ell\geq1.
\end{equation}

The following corollary applies
\Cref{cor:c-diagonal-monotonicity,thm:c-finite-join} to entries of
\eqref{eq:circle-c-values} having the same value \(\rho_\ell\).

\begin{corollary}[Equal-threshold circle joins]
\label{cor:multi-circle-c}

Let \(k,\ell\geq1\), let \(m\geq2k-1\), and let \(d\) be an integer
satisfying
\(k(2\ell-1)\leq d\leq2k\ell\).
Then
\[
 c_{m,m+d}
 \leq\rho_\ell
 =\frac{2\pi\ell}{2\ell+1}.
\]

\end{corollary}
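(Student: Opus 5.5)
The plan is to join \(k\) circle entries that all carry the threshold \(\rho_\ell\), then use diagonal monotonicity to reach first index \(m\), and finally use row monotonicity to fill in the remaining gaps \(d\).

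\textbf{Join step.} By \eqref{eq:circle-c-values}, both \(c_{1,2\ell}\) and \(c_{1,2\ell+1}\) equal \(\rho_\ell\). Fix an integer \(a\) with \(0\leq a\leq k\). Apply \Cref{thm:c-finite-join} to \(k\) factors \((m_i,n_i)\), all with \(m_i=1\): take \(a\) factors with \(n_i=2\ell+1\) and \(k-a\) factors with \(n_i=2\ell\). Then
\[
 M=2k-1,
 \qquad
 N=a(2\ell+2)+(k-a)(2\ell+1)-1=2k\ell+k+a-1.
\]
The gap is
\[
 N-M=2k\ell-k+a=k(2\ell-1)+a,
\]
and the theorem gives \(c_{2k-1,\,2k-1+k(2\ell-1)+a}\leq\rho_\ell\). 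As \(a\) runs over \(0,\dots,k\), the gap runs over every integer \(d\) with \(k(2\ell-1)\leq d\leq 2k\ell\). So for each such \(d\) we get \(c_{2k-1,2k-1+d}\leq\rho_\ell\).

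\textbf{Diagonal shift.} Since \(m\geq2k-1\), apply \Cref{cor:c-diagonal-monotonicity} with shift \(m-(2k-1)\). This gives
\[
 c_{m,m+d}\leq c_{2k-1,2k-1+d}\leq\rho_\ell,
\]
which is the claim.

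\textbf{Main point to check.} The only care needed is the bookkeeping that the gaps \(k(2\ell-1)+a\) cover the stated interval exactly. The computation above confirms this, so no restriction along rows is needed beyond the interval itself. If \(d\) were smaller than \(k(2\ell-1)\), one could still conclude \(c_{m,m+d}\leq\rho_\ell\) from row monotonicity \(c_{m,n}\leq c_{m,n+1}\), but the statement does not require this.
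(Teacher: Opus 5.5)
Your proposal is correct and matches the paper's proof: the paper sets \(t:=d-k(2\ell-1)\in\{0,\ldots,k\}\) and joins \(t\) copies of \((1,2\ell+1)\) with \(k-t\) copies of \((1,2\ell)\) to get \(c_{2k-1,2k-1+d}\leq\rho_\ell\), then applies diagonal monotonicity with shift \(m-(2k-1)\). As you note, the row-monotonicity step in your opening plan is never actually needed.
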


\begin{proof}

Put \(t:=d-k(2\ell-1)\), so \(0\leq t\leq k\).  Apply
\Cref{thm:c-finite-join} to \(k\) entries from
\eqref{eq:circle-c-values}, using \((1,2\ell+1)\) in \(t\) positions
and \((1,2\ell)\) in the remaining \(k-t\).  Every entry has value
\(\rho_\ell\), and the resulting pair of dimensions is
\[
 \left(
  k(1+1)-1,
  t(2\ell+2)+(k-t)(2\ell+1)-1
 \right)
 =
 (2k-1,2k-1+d).
\]
Thus \Cref{thm:c-finite-join} gives
\(c_{2k-1,2k-1+d}\leq\rho_\ell\).  Applying
\Cref{cor:c-diagonal-monotonicity} with shift \(m-(2k-1)\) proves the
displayed inequality.

\end{proof}

For example, taking \(k=2\), \(\ell=1\), and \(d=3\)
joins \(c_{1,2}=\tfrac{2\pi}{3}\) and
\(c_{1,3}=\tfrac{2\pi}{3}\), giving
\(c_{3,6}\leq\tfrac{2\pi}{3}\).  Diagonal shifts then give
\(c_{m,m+3}\leq\tfrac{2\pi}{3}\) for every \(m\geq3\).

For \(k=1\), the preceding corollary gives
\(c_{m,m+2\ell-1},c_{m,m+2\ell}\leq\rho_\ell\).  In particular,
\(c_{m,m+2}\leq\tfrac{2\pi}{3}\).  This inequality is strict for
\(m>1\), because \eqref{eq:low-codimension-c-values} gives
\(c_{m,m+2}=\zeta_m<\tfrac{2\pi}{3}\).

\begin{remark}[Circle thresholds with different indices]
\label{rem:mixed-circle-factors}

The entries in the finite join need not share the same \(\ell\).  If
\(k\geq1\) and
\(n_i\in\{2\ell_i,2\ell_i+1\}\) for \(1\leq i\leq k\), then
\[
 c_{\,2k-1,\,\sum_i n_i+k-1}
 \leq
 \max_i\frac{2\pi\ell_i}{2\ell_i+1}.
\]
More generally, \Cref{cor:c-diagonal-monotonicity} gives, for every
integer \(j\geq0\),
\[
 c_{\,2k-1+j,\,\sum_i n_i+k-1+j}
 \leq
 \max_i\frac{2\pi\ell_i}{2\ell_i+1}.
\]

\end{remark}

\Needspace{7\baselineskip}

\smallskip
\noindent\emph{Dimension intervals.}

Let \(m,k\geq1\), and let \(n\) satisfy
\begin{equation}
\label{eq:staircase-band}
 m+2(k-1)\left\lfloor\frac{m+1}{2}\right\rfloor
 < n
 \leq
 m+2k\left\lfloor\frac{m+1}{2}\right\rfloor.
\end{equation}
The next result follows directly from the circle identities and the
finite join inequality.

\begin{corollary}[Bounds from circle thresholds]
\label{cor:topological-staircase}

If \(m,k\geq1\) and \(n\) satisfies
\eqref{eq:staircase-band}, then
\begin{equation}
\label{eq:intrinsic-staircase}
 c_{m,n}\leq\rho_k.
\end{equation}
Consequently, if \(\rho_k<r<\pi\), then
\[
 \coind(Y_{m,r})\geq n.
\]

\end{corollary}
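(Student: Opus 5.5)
We prove the bound by taking one circle entry with value \(\rho_k\), joining enough copies of it to reach the given first index, and then using diagonal monotonicity and restriction. Write \(p:=\lfloor\tfrac{m+1}{2}\rfloor\geq1\). The natural seed is \(c_{1,2k}=\rho_k\) or \(c_{1,2k+1}=\rho_k\) from \eqref{eq:circle-c-values}. Joining \(p\) copies of \((1,2k+1)\) with \Cref{thm:c-finite-join} gives shifted dimensions \(p\cdot 2\) and \(p(2k+2)\). In unshifted indices this reads
\[
 c_{2p-1,\,2p(k+1)-1}\leq\rho_k .
\]
Since \(2p-1\leq m\), \Cref{cor:c-diagonal-monotonicity} with shift \(d:=m-(2p-1)\in\{0,1\}\) yields
\[
 c_{m,\,m+2pk}\leq\rho_k .
\]
Equivalently, \Cref{cor:multi-circle-c} with its \(k\) replaced by \(p\), its \(\ell\) replaced by \(k\), and \(d=2pk\) gives the same inequality; the hypothesis \(m\geq 2p-1\) holds by the choice of \(p\).

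Next we handle \(n\) in the band \eqref{eq:staircase-band}. The upper endpoint of the band is \(n\leq m+2kp\). Row monotonicity (restriction along the equatorial inclusion \(\Sph^n\hookrightarrow\Sph^{m+2kp}\)) then gives
\[
 c_{m,n}\leq c_{m,m+2kp}\leq\rho_k .
\]
The lower endpoint of the band is not used for \eqref{eq:intrinsic-staircase}. It only serves to place each \(n\) in the step with the smallest admissible \(k\). I would also mention that \(k=1\) requires no separate argument, since \(n\) then ranges over \(m<n\leq m+2p\).

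For the coindex consequence, fix \(\rho_k<r<\pi\). Since \(c_{m,n}\leq\rho_k<r\), the definition of \(c_{m,n}\) as an infimum supplies a scale \(r'<r\) and an odd map \(\Sph^n\to Y_{m,r'}\). For the thickening model we use \eqref{eq:c-metric-equality}, and for the complex model we use the definition \eqref{eq:c-definition-intro} directly. Composing this map with the odd inclusion \(Y_{m,r'}\hookrightarrow Y_{m,r}\) gives \(\coind(Y_{m,r})\geq n\).

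The only step requiring care is the index bookkeeping in the shifted coordinates: checking that \(p\) copies of \((1,2k+1)\) land exactly on the first index \(2p-1\le m\) and on the second-index gap \(2pk\). The rest of the argument is a direct combination of earlier results.
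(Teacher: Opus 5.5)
Your proof is correct, but it reaches the exact index \(n\) differently from the paper. The paper chooses \(\alpha_1,\ldots,\alpha_p\in\{0,\ldots,2k\}\) with \(\sum_i\alpha_i=n-m\), where \(p=\lfloor\tfrac{m+1}{2}\rfloor\). It then joins the mixed circle entries \(c_{1,1+\alpha_i}=\rho_{\lceil\alpha_i/2\rceil}\leq\rho_k\) (with \(c_{1,1}=0\) when \(\alpha_i=0\)), adding the pair \((0,0)\) when \(m\) is even. The join therefore lands exactly on \((m,n)\), and no restriction step is needed.

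You instead join \(p\) copies of the single top seed \((1,2k+1)\) and shift diagonally. The diagonal shift is the same as the paper's extra \((0,0)\) factor. You then descend from \(m+2kp\) to \(n\) by row monotonicity.

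\begin{itemize}
\item Your route is more economical: it uses one seed, and it shows that the whole band is governed by its top endpoint \(n=m+2kp\).
\item The paper's route builds the odd map \(\Sph^n\to Y_{m,r}\) directly as a join of circle-level maps. This mirrors the Kim--Lim--M\'emoli correspondence \(\mathcal C_{m,n}\) cited in the remark that follows. On the Gromov--Hausdorff side row monotonicity is not known, so the gap must be distributed exactly there.
\end{itemize}

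Your index bookkeeping checks out. Your coindex step is the same infimum argument the paper uses.
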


\begin{proof}

{Choose integers
\(\alpha_1,\ldots,\alpha_{\lfloor(m+1)/2\rfloor}
\in\{0,\ldots,2k\}\) such that
\(\sum_i\alpha_i=n-m\);
such a choice exists by \eqref{eq:staircase-band}.  For \(\alpha\geq1\),
\eqref{eq:circle-c-values} gives
\(c_{1,1+\alpha}=\rho_{\lceil \alpha/2\rceil}\),
while \(c_{1,1}=0\).  Apply \Cref{thm:c-finite-join} to the dimension
pairs \((1,1+\alpha_i)\); if \(m\) is even, also include \((0,0)\), whose
corresponding entry is \(c_{0,0}=0\).  The resulting
entry is \(c_{m,n}\), and every entry used is at most \(\rho_k\).
This proves \eqref{eq:intrinsic-staircase}.}

Now let \(\rho_k<r<\pi\).  Since \(c_{m,n}<r\), the definition of
\(c_{m,n}\) supplies an odd map
\(\Sph^n\to\VR(\Sph^m;s)\) for some \(s<r\).  Composing with the
filtration inclusion gives an odd map into \(\VR(\Sph^m;r)\).
Equation \eqref{eq:c-metric-equality} gives the same conclusion for
the metric model: indeed,
\(c^{\mathrm{met}}_{m,n}=c_{m,n}\leq\rho_k<r\), so the same
infimum argument supplies an odd map into \(\VRm(\Sph^m;r)\).  This
proves the coindex assertion.

\end{proof}

\begin{remark}[Correspondence bound]

For the same values of \(m,k,n\), Proposition~2.10 of
Kim--Lim--M\'emoli \cite{kim2026consecutive} constructs a
correspondence \(\mathcal C_{m,n}\) between \(\Sph^m\) and
\(\Sph^n\) such that
\[
 \dis(\mathcal C_{m,n})=\rho_k,
 \qquad
 \dgh(\Sph^m,\Sph^n)\leq\tfrac12\rho_k.
\]
The Gromov--Hausdorff upper bound is additional metric information;
it does not follow from \eqref{eq:intrinsic-staircase}.
{Together with \eqref{eq:polymath-lower-bound},
the displayed correspondence bound also recovers the bound
\(c_{m,n}\leq\rho_k\) from \Cref{cor:topological-staircase}.}

\end{remark}

\endgroup

\section{\texorpdfstring{Projective codes: deterministic and
random bounds}{Projective codes: deterministic and random bounds}}
\label{sec:exact-candidate-seeds}

\begingroup
\setlength{\abovedisplayskip}{6pt plus 2pt minus 2pt}
\setlength{\belowdisplayskip}{6pt plus 2pt minus 2pt}
\setlength{\abovedisplayshortskip}{3pt plus 2pt minus 1pt}
\setlength{\belowdisplayshortskip}{4pt plus 2pt minus 1pt}

A spherical code is a finite subset of a sphere subject to a lower
bound on its pairwise distances; see
\cite{delsarte1977spherical}.  A real projective code is a finite
subset of \(\mathbb RP^{D-1}\), equivalently a finite collection of
lines in \(\mathbb R^D\), or equivalently an antipodal spherical code
modulo antipodes; see \cite[Section~1]{bukh2020nearly}.  Such codes form
the one-dimensional case of Grassmannian packing; see
\cite{conway1996packing}.  Codes in real and other projective spaces
are treated systematically in \cite[Section~2]{cohn2016optimal}.

The results below use the coherence of a projective code to bound an
entry of the \(c\)-matrix.  We first establish the general bound, then
apply it to explicit equiangular line systems and to random projective
codes.

\subsection{\texorpdfstring{Bounds for \(c_{m,n}\) from
projective codes}{Bounds for c(m,n) from projective codes}}
\label{ssec:explicit-projective-seeds}

We first convert a projective code into an upper bound for an entry of
the \(c\)-matrix.

\begin{definition}[Projective distance and coherence]
For \(D\geq1\), equip \(\mathbb RP^{D-1}\) with its quotient
geodesic metric.  For unit representatives \(x,x'\in\Sph^{D-1}\), the
distance between \([x]\) and \([x']\) is
\[
 d_{\mathbb RP^{D-1}}([x],[x']) :=
 \min\{d_{D-1}(x,x'),d_{D-1}(x,-x')\}
 =\arccos|\langle x,x'\rangle|.
\]
For projective lines \(\ell_i=[x_i]\in\mathbb RP^{D-1}\), represented
by unit vectors \(x_i\in\Sph^{D-1}\), their \emph{coherence} is
\[
 \coh(\ell_1,\ldots,\ell_N)
 :=
 \max_{i\neq j}|\langle x_i,x_j\rangle|.
\]
This is independent of the chosen unit representatives, and the
minimum projective distance of the code is
\(\arccos\coh(\ell_1,\ldots,\ell_N)\).
Thus maximizing the minimum projective distance is equivalent to
minimizing the coherence.
\end{definition}

\begin{proposition}[Projective-code bound]
\label{prop:projective-code-seeds}
Suppose that \(D\geq1\),
\(N\geq\max\{D,2\}\), and that \(N\) lines in \(\mathbb R^D\) have
coherence at most \(\vartheta<1\).  Then
\begin{equation}
\label{eq:projective-code-seed}
 c_{D-1,N-1}
 \leq
 \arccos(-\vartheta).
\end{equation}
\end{proposition}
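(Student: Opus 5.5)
We construct an explicit odd map \(\Sph^{N-1}\to\VRm(\Sph^{D-1};r)\) with \(r=\arccos(-\vartheta)\); then \eqref{eq:c-metric-equality} gives \(c_{D-1,N-1}=c^{\mathrm{met}}_{D-1,N-1}\leq r\). Fix unit representatives \(x_1,\ldots,x_N\in\Sph^{D-1}\) of the lines. For \(y=(y_1,\ldots,y_N)\in\Sph^{N-1}\), we set
\[
 f(y):=\sum_{i=1}^N y_i^2\,\delta_{\operatorname{sgn}(y_i)x_i},
\]
where terms with \(y_i=0\) are omitted. The weights \(y_i^2\) sum to one, so \(f(y)\) is a probability measure.

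\textbf{Oddness and continuity.} Replacing \(y\) by \(-y\) flips every sign, so \(f(-y)=a_{\#}f(y)\). For continuity in the Wasserstein topology, note that \(y\mapsto y_i^2\delta_{\operatorname{sgn}(y_i)x_i}\) is a continuous map into finite measures. Away from \(y_i=0\) the atom is locally constant. Near \(y_i=0\) its mass tends to zero, so this term contributes at most \(\pi y_i^2\) to the \(W_1\) distance under any coupling. Summing the \(N\) terms gives continuity. Equivalently, one can check weak continuity by testing against continuous functions \(g\): the quantity \(\sum_i y_i^2 g(\operatorname{sgn}(y_i)x_i)\) is continuous because each summand tends to zero as \(y_i\to0\). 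Then use \cite[Theorem~6.9]{villani2009optimal}, as in the proof of \Cref{thm:finite-metric-join}.

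\textbf{Support diameter.} The support of \(f(y)\) is \(\{\pm x_i\}\) with one sign per index \(i\), so any two distinct support points are \(\epsilon x_i\) and \(\epsilon' x_j\) with \(i\neq j\). Their inner product is at least \(-|\langle x_i,x_j\rangle|\geq-\vartheta\), so their geodesic distance is at most \(\arccos(-\vartheta)=r\). Hence \(f(y)\in\VRm(\Sph^{D-1};r)\). The hypothesis \(\vartheta<1\) guarantees that distinct indices give non-antipodal points, so \(r<\pi\) and the bound is meaningful. The hypotheses \(N\geq D\) and \(N\geq2\) only ensure that \(c_{D-1,N-1}\) is defined, i.e. \(D-1\leq N-1\), and that pairwise coherence makes sense; they play no further role.

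\textbf{Main obstacle.} The only delicate point is continuity at coordinate hyperplanes, where an atom jumps between \(x_i\) and \(-x_i\). This is handled by the vanishing weight \(y_i^2\). It is exactly why the metric thickening, rather than the ordinary complex \(\VR\), is the natural model here, and why we appeal to \eqref{eq:c-metric-equality} to transfer the bound to \(c_{D-1,N-1}\).
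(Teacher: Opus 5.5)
Your proposal is correct and is essentially the paper's argument: your map is the paper's barycentric cross-polytope map \(\sum_i\alpha_i\sigma_ie_i\mapsto\sum_i\alpha_i\delta_{\sigma_ix_i}\) precomposed with the odd homeomorphism \(\Sph^{N-1}\to|\partial C_N|\), \(y\mapsto\sum_i y_i|y_i|\,e_i\), and it uses the same one-sign-per-index estimate \(\langle\sigma_ix_i,\sigma_jx_j\rangle\geq-\vartheta\). The only difference is that the paper also obtains an odd simplicial map directly into the ordinary complex \(\VR(\Sph^{D-1};\arccos(-\vartheta))\), since each closed face, equivalently each closed coordinate orthant, lands in a single simplex; so your remark that the metric thickening is forced here is not quite right, though this is harmless because you invoke \(c^{\mathrm{met}}_{D-1,N-1}=c_{D-1,N-1}\).
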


\begin{proof}
{
We use the cross-polytope construction from
\cite[proof of Theorem~2, pp.~7--8]{adams2024projective}.  Choose unit
representatives \(x_1,\ldots,x_N\), and let \(C_N\) be the
\(N\)-dimensional cross-polytope with vertices \(\pm e_i\).  Define
\(F(\pm e_i)=\pm x_i\).  No face of
\(\partial C_N\) contains an antipodal pair.  If \(\sigma_i e_i\) and
\(\sigma_j e_j\), with \(i\neq j\), lie in a face of \(\partial C_N\),
then \(\langle\sigma_i x_i,\sigma_jx_j\rangle
\geq-|\langle x_i,x_j\rangle|\geq-\vartheta\); hence their spherical
distance is at most \(\arccos(-\vartheta)\).  Thus \(F\) extends to an
odd simplicial map
\(\Sph^{N-1}\cong|\partial C_N|\to
\VR(\Sph^{D-1};\arccos(-\vartheta))\), which proves
\eqref{eq:projective-code-seed}.  The same barycentric formula,
\(\sum_i\alpha_i\sigma_i e_i\mapsto
\sum_i\alpha_i\delta_{\sigma_i x_i}\), also defines an odd continuous
map into \(\VRm(\Sph^{D-1};\arccos(-\vartheta))\).
}
\end{proof}

\begin{remark}[Propagation of a projective-code bound]
\label{rem:projective-code-propagation}

Applying \Cref{cor:c-seed-ray}\textup{(i)} to
\eqref{eq:projective-code-seed} gives, for all integers \(j\geq1\)
and \(s\geq0\),
\[
 c_{jD-1+s,jN-1+s}
 \leq
 \arccos(-\vartheta).
\]
If \(N>D\), \Cref{cor:c-seed-ray}\textup{(ii)} also gives the
associated asymptotic wedge for every \(1<\lambda<\tfrac ND\).

\end{remark}

The bounds obtained from the \(6\)-, \(28\)-, and \(276\)-line
equiangular tight frames are recorded in
\Cref{app:equiangular-projective-codes}.  We now turn to the
dimension-dependent estimates used in \Cref{sec:c-matrix}.

\subsection{\texorpdfstring{Random projective codes and
projective packing--covering bounds}{Random projective codes and
projective packing--covering bounds}}

We next treat dimension-dependent projective codes and record the
packing--covering comparison used later.

\begin{theorem}[Random projective-code bound]
\label{thm:random-projective-code-bound}
Let \(1\leq m<n\), and set
\begin{equation}
\label{eq:random-projective-code-tau}
 \tau_{m,n}:=
 \sqrt{\frac{4\log(n+1)+2}{m+1}}.
\end{equation}
Then
\begin{equation}
\label{eq:random-projective-code-bound}
 c_{m,n}
 \leq
 \frac{\pi}{2}
 +\arcsin\!\bigl(\min\{\tau_{m,n},1\}\bigr).
\end{equation}
\end{theorem}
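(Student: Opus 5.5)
The plan is to apply \Cref{prop:projective-code-seeds} with \(D:=m+1\) and \(N:=n+1\). The hypotheses \(N\geq\max\{D,2\}\) hold because \(n>m\geq1\). It then suffices to exhibit \(n+1\) lines in \(\mathbb R^{m+1}\) with coherence at most \(\vartheta:=\min\{\tau_{m,n},1\}\). The identity \(\arccos(-\vartheta)=\tfrac\pi2+\arcsin\vartheta\) then yields \eqref{eq:random-projective-code-bound}.

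First dispose of the trivial regime \(\tau_{m,n}\geq1\). There the right-hand side equals \(\pi\), and \(c_{m,n}\leq\pi\) always holds, since \(\VR(\Sph^m;\pi)\) contains an antipodally fixed point (or, alternatively, since any \(N\) lines have coherence at most \(1\) and the proof of \Cref{prop:projective-code-seeds} still works at scale \(\pi\)). So assume \(\tau_{m,n}<1\).

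In this regime I would use the probabilistic method. Choose \(x_1,\ldots,x_N\) independently and uniformly on \(\Sph^{D-1}\). For a fixed pair \(i\neq j\), rotation invariance means \(\langle x_i,x_j\rangle\) is distributed as the first coordinate of a uniform point. The standard spherical-cap concentration bound then gives
\[
 \Pr\bigl(|\langle x_i,x_j\rangle|\geq t\bigr)\leq 2e^{-Dt^2/2}
\]
for \(0\leq t<1\); this follows from the cap-area estimate \(\sigma(\{x_1\geq t\})\leq e^{-Dt^2/2}\). A union bound over the \(\binom N2<N^2/2\) pairs shows that the failure probability is less than \(N^2e^{-Dt^2/2}\). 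Taking \(t=\tau_{m,n}\) gives \(Dt^2/2=2\log N+1\), so the failure probability is below \(N^2\cdot N^{-2}e^{-1}<1\). Some realization therefore has coherence below \(\tau_{m,n}\). This is exactly why the constant in the definition of \(\tau_{m,n}\) is \(4\log(n+1)+2\).

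The main obstacle is establishing the cap bound with the constant in the form used here, namely \(2e^{-Dt^2/2}\) with \(D=m+1\) the ambient dimension. If the clean estimate \(\sigma(\text{cap})\leq e^{-Dt^2/2}\) is not available as a citation, I would derive it directly as follows. Let \(g\) be a standard Gaussian vector in \(\mathbb R^D\), so that \(x=g/\lVert g\rVert\). For each \(\lambda>0\), apply a Chernoff bound to the event \(g_1\geq t\lVert g\rVert\), that is, to \(g_1^2(1-t^2)-t^2\sum_{k\geq2}g_k^2\geq0\) with \(g_1>0\). Optimizing \(\lambda\) gives \((1-t^2)^{(D-1)/2}\)-type bounds, and these are at most \(e^{-(D-1)t^2/2}\). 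The leftover slack would then be absorbed by the additive \(2\) in the numerator of \(\tau_{m,n}\).

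Alternatively, one can use the exact formula for the half-cap and bound it by \(\tfrac12(1-t^2)^{(D-1)/2}\), which is the classical estimate. This would lead to the condition \((D-1)t^2/2\geq 2\log N\), which is slightly weaker. The "\(+2\)" compensates for the \(D-1\) versus \(D\) discrepancy, because \(t^2<1\).
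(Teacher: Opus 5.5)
Your proof is correct and follows the paper's argument. Both use i.i.d.\ uniform points on \(\Sph^m\), the tail bound \(\Pr(|\langle x_i,x_j\rangle|\geq t)\leq2e^{-(m+1)t^2/2}\), and the union bound \(N^2e^{-(m+1)t^2/2}=e^{-1}\) at \(t=\tau_{m,n}\), followed by \Cref{prop:projective-code-seeds} and \(\arccos(-\vartheta)=\tfrac\pi2+\arcsin\vartheta\). The only difference is where the tail bound comes from. The paper derives it from the even moments \(\mathbb E\langle\mathbf x,u\rangle^{2k}\leq(2k-1)!!/d^k\), which give \(\mathbb E e^{\lambda\langle\mathbf x,u\rangle}\leq e^{\lambda^2/(2d)}\), and then applies Chernoff. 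You cite the cap estimate \(\sigma(\{x_1\geq t\})\leq e^{-Dt^2/2}\) instead. That estimate does hold with \(D=m+1\) the ambient dimension; the paper's moment argument proves exactly this one-sided bound.

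Of your two fallback derivations, only the second is sound.

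In the Gaussian route, when \(Dt^2>1\), the best Chernoff bound for the event \((1-t^2)g_1^2-t^2\sum_{k\geq2}g_k^2\geq0\) is exactly
\[
 \sqrt{Dt^2}\,\Bigl(\frac{D(1-t^2)}{D-1}\Bigr)^{(D-1)/2}.
\]
Compared with \((1-t^2)^{(D-1)/2}\), it carries the factor \(\sqrt{Dt^2}\,(\tfrac{D}{D-1})^{(D-1)/2}\geq\sqrt{2Dt^2}\). At \(t=\tau_{m,n}\) this factor is at least \(\sqrt{2(4\log N+2)}\), which is unbounded in \(N\). The additive \(2\) only buys the constant \(e^{-1}\), so it cannot absorb this factor. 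Concretely, at \((m,n)=(998,999)\) the resulting union bound is about \(1.34\), and along \(n=m+1\to\infty\) it grows like \(\tfrac12\sqrt{(4\log(n+1)+2)/e}\).

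The half-cap route does work; it is \Cref{lem:cap-volume} with \(d=D-1\). It gives \(\Pr(|\langle x_i,x_j\rangle|\geq t)\leq(1-t^2)^{(D-1)/2}\leq e^{-(D-1)t^2/2}\). Since \((D-1)\tau_{m,n}^2/2=2\log N+1-\tau_{m,n}^2/2\), the union bound is less than \(\tfrac12e^{-1+\tau_{m,n}^2/2}<1\) whenever \(\tau_{m,n}<1\).

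So rely on the citation, the paper's moment argument, or the half-cap estimate, and drop the Gaussian Chernoff sketch.
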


The probabilistic construction is standard.  Subgaussian estimates
for uniform points on spheres and the resulting almost-orthogonal
families are discussed in
\cite[Theorem~3.4.5, pp.~74--75, and Exercise~3.41,
pp.~96--97]{vershynin2026high}.  The coordinate moments used below
appear in
\cite[Lemma~4.1 and equation~(22), pp.~11--12]{cai2011phaseArxiv}.
More precise coherence and extreme-angle asymptotics appear in
\cite[Corollary~2.1, p.~6, and Theorem~4,
p.~8]{cai2011phaseArxiv} and
\cite[Theorems~6, 8, and~9, pp.~1844--1846]{cai2013angles}.  The proof
is included because the explicit finite-dimensional constant in
\(\tau_{m,n}\) is used below.

\begin{proof}
If \(\tau_{m,n}\geq1\), the claimed inequality is the general bound
\(c_{m,n}\leq\pi\).  We may therefore assume
\(\tau_{m,n}<1\).

Write \(d=m+1\) and \(N=n+1\).  We first record the elementary
probabilistic estimate that will produce the projective code.
Let \(\mathbf x\) be uniformly distributed on \(\Sph^{d-1}\), and fix
\(u\in\Sph^{d-1}\).  Rotational invariance and the standard coordinate
moments of spherical measure
\cite[Lemma~4.1 and equation~(22),
pp.~11--12]{cai2011phaseArxiv}
give, for every \(k\geq1\),
\[
 \mathbb E\langle \mathbf x,u\rangle^{2k}
 =
 \frac{(2k-1)!!}{d(d+2)\cdots(d+2k-2)}
 \leq
 \frac{(2k-1)!!}{d^k},
\]
while all odd moments vanish.  Hence, for every \(\lambda\in\mathbb R\),
\[
 \mathbb E e^{\lambda\langle \mathbf x,u\rangle}
 \leq
 \sum_{k=0}^{\infty}
 \frac{\lambda^{2k}}{2^k k!d^k}
 =
 e^{\lambda^2/(2d)}.
\]
{The Chernoff bound, optimized at \(\lambda=dt\), and
symmetry now give
\(\mathbb P(|\langle \mathbf x,u\rangle|>t)\leq 2e^{-dt^2/2}\) for
\(t>0\).}

Choose independent uniform random points
\(\mathbf x_1,\ldots,\mathbf x_N\in\Sph^{d-1}\).  Conditional on
\(\mathbf x_i\), the random variable
\(\langle \mathbf x_i,\mathbf x_j\rangle\) has the distribution considered
above.  A union bound over the \(\binom N2\) pairs therefore gives
\[
 \mathbb P\left(
  \max_{1\leq i<j\leq N}
  |\langle \mathbf x_i,\mathbf x_j\rangle|>t
 \right)
 \leq
 N^2e^{-dt^2/2}.
\]
For \(t=\tau_{m,n}\), the right-hand side equals \(e^{-1}<1\).
Consequently, there exist \(N\) lines
\([x_1],\ldots,[x_N]\in\mathbb RP^m\) with coherence at most
\(\tau_{m,n}\).  By the projective metric convention in
\Cref{ssec:explicit-projective-seeds}, this code has minimum distance at least
\(\arccos(\tau_{m,n})\).  Thus
\(\mathrm{pack}_{\mathbb RP^m}(n+1)\geq\arccos(\tau_{m,n})\).
Applying \Cref{prop:projective-code-seeds} with
\((D,N,\vartheta)=(m+1,n+1,\tau_{m,n})\), and using
\(\arccos(-t)=\tfrac{\pi}{2}+\arcsin t\), gives
\eqref{eq:random-projective-code-bound}.
\end{proof}

We now combine the projective-code lower bound on
\(\pi-c_{m,n}\) with the complementary projective-covering upper bound.

\begin{proposition}[Projective packing--covering sandwich]
\label{prop:c-projective-sandwich}
For all \(n\geq m\geq1\),
\begin{equation}
\label{eq:c-projective-sandwich}
 \mathrm{pack}_{\mathbb RP^m}(n+1)
 \leq
 \pi-c_{m,n}
 \leq
 2\mathrm{cov}_{\mathbb RP^m}(n).
\end{equation}
\end{proposition}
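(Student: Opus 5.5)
The plan is to prove the two inequalities in \eqref{eq:c-projective-sandwich} separately. The left inequality is a direct consequence of \Cref{prop:projective-code-seeds}. The right inequality comes from a Borsuk--Ulam obstruction built from a projective covering.

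\emph{Packing side.} Put \(p:=\mathrm{pack}_{\mathbb RP^m}(n+1)\) and fix \(\eps>0\). Choose \(n+1\) lines in \(\mathbb R^{m+1}\) whose minimum projective distance is at least \(p-\eps\). By the definition of projective distance, their coherence is then at most \(\vartheta:=\cos(p-\eps)\). If \(p-\eps>0\), then \(\vartheta<1\). We apply \Cref{prop:projective-code-seeds} with \((D,N)=(m+1,n+1)\); its hypothesis \(N\geq\max\{D,2\}\) holds because \(n\geq m\geq1\). This gives
\[
c_{m,n}\leq\arccos(-\cos(p-\eps))=\pi-(p-\eps).
\]
Letting \(\eps\downarrow0\) proves \(p\leq\pi-c_{m,n}\). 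The degenerate case \(p=0\) is trivial because \(c_{m,n}\leq\pi\).

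\emph{Covering side.} Put \(\rho:=\mathrm{cov}_{\mathbb RP^m}(n)\), and fix \(\rho'>\rho\). Choose lines \([a_1],\ldots,[a_n]\), with \(a_i\in\Sph^m\), that cover \(\mathbb RP^m\) within radius \(\rho\). Then every \(x\in\Sph^m\) satisfies \(d_m(x,\pm a_i)<\rho'\) for some \(i\) and some sign.

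Define continuous weights \(w_i^{\pm}(x):=\max\{0,\rho'-d_m(x,\pm a_i)\}\) and the odd continuous vector field
\[
w(x):=\sum_i\bigl(w_i^+(x)-w_i^-(x)\bigr)e_i\in\mathbb R^n,
\]
so that \(w(-x)=-w(x)\). Suppose \(r<\pi-2\rho'\) and \(\mu\in\VRm(\Sph^m;r)\). Then no index \(i\) can have one support point within \(\rho'\) of \(a_i\) and another within \(\rho'\) of \(-a_i\), since those two points would be at distance greater than \(\pi-2\rho'>r\). Hence, for each \(i\), the \(i\)-th coordinates of \(w(x)\) over \(x\in\operatorname{supp}\mu\) all have the same sign. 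Consequently no cancellation occurs in \(W(\mu):=\int w\,d\mu\), and
\[
\lVert W(\mu)\rVert_1=\int\lVert w\rVert_1\,d\mu>0,
\]
because the covering property makes \(\lVert w(x)\rVert_1>0\) at every point. Since \(w\) is continuous and bounded on a compact space, \(W\) is continuous in the Wasserstein (equivalently weak) topology, and it is odd. Therefore \(\mu\mapsto W(\mu)/\lVert W(\mu)\rVert_1\) is an odd map \(\VRm(\Sph^m;r)\to\Sph^{n-1}\).

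Now suppose there were an odd map \(\Sph^n\to\VRm(\Sph^m;r)\) at such a scale \(r\). Composing it with the map above would give an odd map \(\Sph^n\to\Sph^{n-1}\), contradicting Borsuk--Ulam. By \eqref{eq:c-metric-equality} this forces \(c_{m,n}\geq\pi-2\rho'\). Letting \(\rho'\downarrow\rho\) gives \(\pi-c_{m,n}\leq2\rho\).

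\emph{Expected difficulty.} I expect the main care to be needed in the covering step: using the strict margin \(\rho'>\rho\) so that the weights cover every point, and checking that the no-cancellation argument survives passage to measures. Working in the metric-thickening model avoids the topological subtleties of \(\VR\) on a nondiscrete vertex set. Alternatively, the same formula, extended affinely on simplices, works for \(\VR\).
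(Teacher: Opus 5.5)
Your proposal is correct. The packing side is the paper's own argument. The paper takes an optimal \((n+1)\)-point projective code, which exists by compactness, and applies \Cref{prop:projective-code-seeds}; your \(\eps\)-approximation does the same job.

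The covering side takes a different route. The paper does not reprove this inequality: it quotes \(c_{m,n}\geq\pi-2\,\mathrm{cov}_{\mathbb RP^m}(n)\) from Theorem~5.3 of Adams et al. You prove it from scratch. For every \(r<\pi-2\rho'\) you build, from antipodal bump functions, an explicit odd continuous map \(\VRm(\Sph^m;r)\to\Sph^{n-1}\), and then apply Borsuk--Ulam together with \eqref{eq:c-metric-equality}.

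The details check out:
\begin{itemize}
\item \(w\) is odd because \(w_i^{+}(-x)=w_i^{-}(x)\).
\item The no-cancellation step also rules out a single support point lying near both \(a_i\) and \(-a_i\), since that would force \(\pi-2\rho'<0\leq r\).
\item Hence \(\lVert W(\mu)\rVert_1=\int\lVert w\rVert_1\,d\mu>0\).
\item \(W\) is in fact Lipschitz in the \(1\)-Wasserstein metric.
\end{itemize}

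Your argument makes the proposition self-contained. It also proves slightly more than needed: an odd map \emph{out of} the thickening into \(\Sph^{n-1}\) bounds the equivariant index, not just the coindex. The paper's citation buys brevity.

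Two cosmetic points. First, you do not need the covering infimum to be attained: any configuration of at most \(n\) lines with covering radius below \(\rho'\), padded by repetition, suffices. Second, when \(\rho'\geq\pi/2\) the bound \(c_{m,n}\geq\pi-2\rho'\) holds trivially because \(c_{m,n}\geq0\).
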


\begin{proof}
Compactness of \(\mathbb RP^m\) gives an \((n+1)\)-point projective
code attaining \(\mathrm{pack}_{\mathbb RP^m}(n+1)\).  Its coherence is
\(\cos(\mathrm{pack}_{\mathbb RP^m}(n+1))\), so
\Cref{prop:projective-code-seeds} gives
\[
 c_{m,n}
 \leq
 \arccos\!\left(-\cos(\mathrm{pack}_{\mathbb RP^m}(n+1))\right)
 =
 \pi-\mathrm{pack}_{\mathbb RP^m}(n+1).
\]
This is the left inequality in
\eqref{eq:c-projective-sandwich}.  The right inequality is the
rearrangement of
\(c_{m,n}\geq\pi-2\mathrm{cov}_{\mathbb RP^m}(n)\),
proved in \cite[Theorem~5.3]{adams2022gromov}.
\end{proof}

Theorem~\ref{thm:random-projective-code-bound} supplies the
input used in \Cref{sec:c-matrix} for the subexponential limit, the
supercritical linear-capacity transition, and the lower
exponential-capacity bound.  {The complementary inputs
are the exact values of \(c_{m,m+1}\) and the covering inequality in
\Cref{prop:c-projective-sandwich}.}

\endgroup

\section{\texorpdfstring{Additive sublevels and
fixed-scale growth}{Additive sublevels and fixed-scale growth}}
\label{sec:c-matrix}

\begingroup
\setlength{\abovedisplayskip}{6pt plus 2pt minus 2pt}
\setlength{\belowdisplayskip}{6pt plus 2pt minus 2pt}
\setlength{\abovedisplayshortskip}{3pt plus 2pt minus 1pt}
\setlength{\belowdisplayshortskip}{4pt plus 2pt minus 1pt}

The join inequality is most naturally expressed after shifting both
dimensions by one.  We use \emph{dimension-plus-one coordinates}:
\((k,\ell)\in\mathbb N^2\), with \(1\leq k\leq\ell\), represents the entry
\(c_{k-1,\ell-1}\).  In these coordinates, \Cref{thm:c-finite-join} becomes
\begin{equation}
\label{eq:c-matrix-max-subadditivity}
 c_{k_1+k_2-1,\,\ell_1+\ell_2-1}
 \leq
 \max\bigl\{c_{k_1-1,\ell_1-1},c_{k_2-1,\ell_2-1}\bigr\}.
\end{equation}
This additive form suggests studying the sublevel sets of the
shifted array in
\(\{(k,\ell)\in\mathbb N^2:1\leq k\leq\ell\}\).

Recall the coordinate monotonicities
\begin{equation}
\label{eq:c-coordinate-monotonicities}
 c_{m,n}\leq c_{m,n+1},
 \qquad
 c_{m+1,n}\leq c_{m,n},
\end{equation}
whenever the displayed entries are defined.  Thus rows are
nondecreasing and columns are nonincreasing; by
\Cref{cor:c-diagonal-monotonicity}, every fixed-gap diagonal is
nonincreasing as well.

We now study the consequences of
\eqref{eq:c-matrix-max-subadditivity}.  Its additive sublevel sets give
rise to a superadditive dimension-frontier function and hence to a linear
capacity.  {The exact values of \(c_{m,m+1}\) determine
the subcritical branch}, while random projective codes determine the
supercritical branch.  After exponential normalization,
random projective codes give the lower bound and projective covering
gives the upper bound for the growth rate of \(B_r(k)\) at every scale
\(\tfrac\pi2<r<\pi\).

\subsection{Additive sublevel sets and linear capacity}

For \(0\leq r\leq\pi\), define the
\emph{additive \(r\)-sublevel set}
\[
 \Gamma_r:=
 \bigl\{(k,\ell)\in\mathbb N^2:
 1\leq k\leq\ell,\ c_{k-1,\ell-1}\leq r\bigr\}.
\]

\begin{proposition}[Additive sublevel sets]
\label{prop:c-additive-sublevels}

For every \(0\leq r\leq\pi\), one has
\(\Gamma_r+\Gamma_r\subseteq\Gamma_r\).
Moreover, if \((k,\ell)\in\Gamma_r\), then
\((k+d,\ell+d)\in\Gamma_r\qquad(d\geq0)\),
and every \((k',\ell')\) satisfying
\[
 k'\geq k,\qquad \ell'\leq\ell,\qquad k'\leq\ell'
\]
also belongs to \(\Gamma_r\).

\end{proposition}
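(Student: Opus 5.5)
The plan is to prove the three assertions separately, each by translating membership in \(\Gamma_r\) back into the original indices \(m=k-1\), \(n=\ell-1\). Each then follows from a result already established: the join inequality, diagonal monotonicity, or the coordinate monotonicities \eqref{eq:c-coordinate-monotonicities}. No new construction is needed.

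\emph{Additive closure.} Let \((k_1,\ell_1),(k_2,\ell_2)\in\Gamma_r\). Then \(k_1+k_2\leq\ell_1+\ell_2\), so the sum lies in the index domain. Apply \Cref{thm:c-finite-join} with two factors \((m_i,n_i)=(k_i-1,\ell_i-1)\), which satisfy \(0\leq m_i\leq n_i\). This gives \(M=k_1+k_2-1\) and \(N=\ell_1+\ell_2-1\). The inequality is exactly \eqref{eq:c-matrix-max-subadditivity}:
\[
c_{k_1+k_2-1,\ell_1+\ell_2-1}\leq\max_i c_{k_i-1,\ell_i-1}\leq r .
\]
Hence \((k_1+k_2,\ell_1+\ell_2)\in\Gamma_r\).

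\emph{Diagonal shifts.} If \((k,\ell)\in\Gamma_r\), then \Cref{cor:c-diagonal-monotonicity} gives
\[
c_{k-1+d,\ell-1+d}\leq c_{k-1,\ell-1}\leq r .
\]
Since \(k+d\leq\ell+d\), this shows \((k+d,\ell+d)\in\Gamma_r\).

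\emph{Order closure.} Suppose \(k\leq k'\leq\ell'\leq\ell\). I pass from \((k-1,\ell-1)\) to \((k'-1,\ell'-1)\) in two steps, keeping every intermediate entry inside the upper triangle where it is defined.
\begin{enumerate}[label=\textup{(\arabic*)}]
\item First lower the second index from \(\ell-1\) to \(\ell'-1\), one step at a time. Row monotonicity \(c_{m,n}\leq c_{m,n+1}\) gives \(c_{k-1,\ell'-1}\leq c_{k-1,\ell-1}\). Every intermediate entry has second index at least \(\ell'-1\geq k-1\), so it is defined.
\item Then raise the first index from \(k-1\) to \(k'-1\) in the column \(\ell'-1\). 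Column monotonicity \(c_{m+1,n}\leq c_{m,n}\) gives \(c_{k'-1,\ell'-1}\leq c_{k-1,\ell'-1}\). This is valid because \(k'-1\leq\ell'-1\).
\end{enumerate}
Combining the two steps, \(c_{k'-1,\ell'-1}\leq r\).

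The only point needing care is this ordering of moves: lowering the second index first guarantees the path never leaves the upper triangle. Otherwise everything is bookkeeping of the index shift.
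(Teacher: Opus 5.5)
Your proof is correct and follows the paper's argument: additive closure is the binary case \eqref{eq:c-matrix-max-subadditivity} of \Cref{thm:c-finite-join}, diagonal shifts come from \Cref{cor:c-diagonal-monotonicity}, and order closure comes from the two coordinate monotonicities \eqref{eq:c-coordinate-monotonicities}. Your index bookkeeping spells out what the paper leaves implicit; note that the order of the two monotonicity moves does not actually matter, since \(k'\leq\ell'\leq\ell\) keeps every intermediate entry in the upper triangle either way.
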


\begin{proof}
Closure under addition is exactly
\eqref{eq:c-matrix-max-subadditivity}.  Closure under adding
\((d,d)\) is diagonal monotonicity.  The final assertion follows from
the two coordinate monotonicities in
\eqref{eq:c-coordinate-monotonicities}.
\end{proof}

\begin{definition}[Dimension-frontier function]
\label{def:c-capacity-frontier}
For \(0\leq r<\pi\) and \(k\geq1\), the \emph{dimension-frontier
function at scale \(r\)} is
\[
 B_r(k):=
 \max\bigl\{\ell\geq k:(k,\ell)\in\Gamma_r\bigr\}.
\]
The maximum is well-defined.  The defining set is nonempty because
\(c_{k-1,k-1}=0\).  It is finite because, when \(k\geq2\),
\cite[Theorem~5.3]{adams2022gromov} gives
\(c_{k-1,\ell-1}\to\pi\) as \(\ell\to\infty\), whereas for
\(k=1\) one has \(c_{0,\ell-1}=\pi\) for every \(\ell>1\).
\end{definition}

The function \(B_r\) records the upper frontier of the shifted
\(r\)-sublevel set.  The linear and exponential capacities defined
below are scalar functions of \(r\), obtained by normalizing the growth
of \(B_r(k)\) as \(k\to\infty\).

\begin{corollary}[Superadditive frontier and linear capacity]
\label{cor:c-asymptotic-capacity}

For every \(0\leq r<\pi\), the function \(B_r\) is superadditive:
\(B_r(k_1+k_2)\geq B_r(k_1)+B_r(k_2)\).
Consequently, the \emph{linear capacity at scale \(r\)},
\[
 \Lambda(r):=
 \lim_{k\to\infty}\frac{B_r(k)}k
 =
 \sup_{k\geq1}\frac{B_r(k)}k,
\]
exists in \([1,+\infty]\).

\end{corollary}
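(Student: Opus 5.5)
The proof has two parts: superadditivity of \(B_r\) comes directly from additive closure of \(\Gamma_r\), and the limit then comes from Fekete's lemma.

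\emph{Superadditivity.} Fix \(0\leq r<\pi\) and \(k_1,k_2\geq1\). By \Cref{def:c-capacity-frontier} the values \(\ell_i:=B_r(k_i)\) are finite, satisfy \(\ell_i\geq k_i\), and have \((k_i,\ell_i)\in\Gamma_r\). \Cref{prop:c-additive-sublevels} gives \(\Gamma_r+\Gamma_r\subseteq\Gamma_r\), so \((k_1+k_2,\ell_1+\ell_2)\in\Gamma_r\). Its second coordinate is at least \(k_1+k_2\), so it lies in the set whose maximum defines \(B_r(k_1+k_2)\). Hence \(B_r(k_1+k_2)\geq\ell_1+\ell_2\). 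Concretely, this is the binary case of \Cref{thm:c-finite-join} applied to \(c_{k_1-1,\ell_1-1}\leq r\) and \(c_{k_2-1,\ell_2-1}\leq r\).

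\emph{Existence of the limit.} I would apply Fekete's lemma in its superadditive form to the sequence \(a_k:=B_r(k)\). That version says that for superadditive \(a\colon\mathbb N_{\geq1}\to\mathbb R\), the ratio \(a_k/k\) converges to \(\sup_k a_k/k\) in \((-\infty,+\infty]\). Since each \(a_k\) is finite, the lemma applies even though the supremum may be \(+\infty\).

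I would also include the short proof, because the extended-valued case deserves care. Fix \(j\) and write \(k=qj+s\) with \(0\leq s<j\). Iterated superadditivity gives \(a_k\geq q\,a_j+a_s\), where the term \(a_s\) is omitted when \(s=0\); the remainders \(a_s\) are finite and at least \(1\). Dividing by \(k\) and letting \(k\to\infty\) gives \(\liminf_k a_k/k\geq a_j/j\). Taking the supremum over \(j\) yields \(\liminf\geq\sup\geq\limsup\), which covers the value \(+\infty\) as well.

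\emph{Range.} Since \(B_r(k)\geq k\), every ratio is at least \(1\), so \(\Lambda(r)\in[1,+\infty]\). No step is a real obstacle. The only point needing attention is that finiteness of \(B_r(k)\), used throughout, is exactly where the hypothesis \(r<\pi\) enters, through \Cref{def:c-capacity-frontier}.
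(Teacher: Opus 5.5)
Your proposal is correct and follows the paper's proof exactly: the paper applies \Cref{prop:c-additive-sublevels} to the pairs \((k_i,B_r(k_i))\in\Gamma_r\) to get superadditivity, and then cites the superadditive form of Fekete's lemma. The only difference is that you also write out the short proof of Fekete's lemma for the case where the supremum may be \(+\infty\), which the paper handles by citation.
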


\begin{proof}
Apply \Cref{prop:c-additive-sublevels} to
\((k_i,B_r(k_i))\in\Gamma_r\).  The existence and formula for the
limit then follow from Fekete's lemma in its superadditive form
\cite[Theorem~1.9.2]{steele1997probability}.
\end{proof}

\subsection{Subexponential targets and the linear-capacity transition}

We now use random projective codes to determine the subexponential
limit and the resulting transition of the linear capacity.

\begin{corollary}[Subexponential and proportional convergence]
\label{cor:subexponential-convergence}
Let \(m_j<n_j\) be integer sequences such that
\[
 m_j\longrightarrow\infty,
 \qquad
 \log(n_j+1)=o(m_j).
\]
Then
\[
 c_{m_j,n_j}\longrightarrow\frac{\pi}{2}.
\]
In particular, if
\[
 \frac{n_j}{m_j}\longrightarrow\lambda>1,
\]
then the same convergence holds.
\end{corollary}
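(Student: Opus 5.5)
The plan is a squeeze between an upper bound from \Cref{thm:random-projective-code-bound} and a lower bound from the exact low-codimension values.

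\emph{Lower bound.} Since \(m_j<n_j\), row monotonicity in \eqref{eq:c-coordinate-monotonicities} and \eqref{eq:low-codimension-c-values} give
\[
 c_{m_j,n_j}\geq c_{m_j,m_j+1}=\zeta_{m_j}=\arccos\!\Bigl(-\tfrac1{m_j+1}\Bigr).
\]
This exceeds \(\tfrac\pi2\) and tends to \(\tfrac\pi2\) as \(m_j\to\infty\). Hence \(\liminf_j c_{m_j,n_j}\geq\tfrac\pi2\).

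\emph{Upper bound.} Since \(m_j\to\infty\), we have \(m_j\geq1\) for all large \(j\), so \Cref{thm:random-projective-code-bound} applies and gives
\[
 c_{m_j,n_j}\leq\tfrac\pi2+\arcsin\bigl(\min\{\tau_{m_j,n_j},1\}\bigr).
\]
By definition,
\[
 \tau_{m_j,n_j}^2=\frac{4\log(n_j+1)+2}{m_j+1}.
\]
The hypothesis \(\log(n_j+1)=o(m_j)\) forces the first term to tend to zero, and \(m_j\to\infty\) forces the second term \(2/(m_j+1)\) to tend to zero. So \(\tau_{m_j,n_j}\to0\). By continuity of \(\arcsin\) at \(0\), \(\limsup_j c_{m_j,n_j}\leq\tfrac\pi2\). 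Together with the lower bound, \(c_{m_j,n_j}\to\tfrac\pi2\).

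\emph{Proportional case.} Suppose \(n_j/m_j\to\lambda\in(1,\infty)\). Then \(n_j+1\leq 2\lambda m_j\) for large \(j\), so \(\log(n_j+1)=O(\log m_j)=o(m_j)\). The hypotheses of the first part therefore hold. The case \(\lambda=\infty\) is excluded because the statement reads \(\lambda>1\) as a finite limit.

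There is no real obstacle. The only point to check is that the exact value \(c_{m,m+1}=\zeta_m\) for \(m\geq1\) holds, which is recorded in \eqref{eq:low-codimension-c-values}.
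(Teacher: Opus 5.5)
Your proposal is correct and follows essentially the same route as the paper. The paper also squeezes \(c_{m_j,n_j}\) between the random projective-code bound, using \(\tau_{m_j,n_j}\to0\), and the lower bound \(c_{m_j,n_j}\geq c_{m_j,m_j+1}=\zeta_{m_j}\to\tfrac\pi2\), and it reduces the proportional case to the first part via \(\log(n_j+1)=O(\log(m_j+1))=o(m_j)\).
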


\begin{remark}[Near-diagonal specialization]
\label{rem:near-diagonal-specialization}
If \(d=d(m)\geq1\) satisfies \(d=o(m)\), then
\(n=m+d(m)\) satisfies \(\log(n+1)=O(\log(m+1))=o(m)\).
Consequently, \Cref{cor:subexponential-convergence} gives
\(c_{m,m+d(m)}\longrightarrow\tfrac\pi2\).
\Cref{cor:c-sublinear-gap-asymptotics} below gives a deterministic
finite-dimensional estimate in this regime.
\end{remark}

\begin{proof}[Proof of
\Cref{cor:subexponential-convergence}]
{
By \eqref{eq:random-projective-code-tau}, the hypothesis
implies \(\tau_{m_j,n_j}\to0\).  Hence the random projective-code bound
gives
\[
 c_{m_j,n_j}
 \leq
 \arccos(-\tau_{m_j,n_j})
 \longrightarrow
 \frac{\pi}{2},
\]
and therefore
\(\limsup_{j\to\infty}c_{m_j,n_j}\leq\tfrac{\pi}{2}\).
For the reverse inequality, coordinate monotonicity and
\eqref{eq:low-codimension-c-values} give
\[
 c_{m_j,n_j}
 \geq
 c_{m_j,m_j+1}
 =
 \zeta_{m_j}
 \longrightarrow
 \frac{\pi}{2}.
\]
Thus
\(\liminf_{j\to\infty}c_{m_j,n_j}\geq\tfrac{\pi}{2}\), and the two
bounds prove the asserted convergence.

Finally, if \(n_j/m_j\to\lambda>1\), then \(n_j=O(m_j)\), so
\(\log(n_j+1)=O(\log(m_j+1))=o(m_j)\).  Hence the proportional regime
satisfies the hypothesis above.
}
\end{proof}

The proportional convergence in
\Cref{cor:subexponential-convergence} will be combined with chromatic
lower bounds in \Cref{thm:ratio-chromatic-lower}.  Here it determines the
supercritical branch of the linear capacity: if \(r>\tfrac\pi2\) and
\(\lambda>1\), then \(\Gamma_r\) contains pairs \((k,\ell)\) with
\(\ell/k\to\lambda\).  In contrast, for \(r\leq\tfrac\pi2\), the exact
value \(c_{k-1,k}=\zeta_{k-1}>\tfrac\pi2\) excludes every off-diagonal
pair from \(\Gamma_r\).

\begin{proposition}[Dichotomy for the linear capacity]
\label{prop:capacity-dichotomy}
For every \(0\leq r<\pi\),
\[
 \Lambda(r)=
 \begin{cases}
  1,&0\leq r\leq\pi/2,\\
 +\infty,&\pi/2<r<\pi.
 \end{cases}
\]
In the first range, more precisely,
\(B_r(k)=k\qquad(k\geq1,\ 0\leq r\leq\tfrac\pi2)\).
\end{proposition}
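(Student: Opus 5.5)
The proof splits into the subcritical branch \(0\leq r\leq\tfrac\pi2\) and the supercritical branch \(\tfrac\pi2<r<\pi\).

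\emph{Subcritical branch.} Fix \(k\geq1\) and \(0\leq r\leq\tfrac\pi2\). Suppose, for contradiction, that \((k,\ell)\in\Gamma_r\) for some \(\ell>k\). By the final assertion of \Cref{prop:c-additive-sublevels}, decreasing the second coordinate keeps us in \(\Gamma_r\), so \((k,k+1)\in\Gamma_r\). This says \(c_{k-1,k}\leq r\leq\tfrac\pi2\). But \eqref{eq:low-codimension-c-values} gives \(c_{k-1,k}=\zeta_{k-1}=\arccos(-\tfrac1k)\), which is strictly larger than \(\tfrac\pi2\) because \(-\tfrac1k<0\); for \(k=1\) this value is \(\pi\). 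This contradiction shows no off-diagonal pair lies in \(\Gamma_r\). Since \(c_{k-1,k-1}=0\), we get \(B_r(k)=k\) for every \(k\), and hence \(\Lambda(r)=1\).

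\emph{Supercritical branch.} Fix \(\tfrac\pi2<r<\pi\) and an arbitrary \(\lambda>1\). It suffices to show \(\liminf_k B_r(k)/k\geq\lambda\), since then \(\Lambda(r)=+\infty\). Put \(m=k-1\) and \(n=\lfloor\lambda k\rfloor-1\), so that \(n>m\) once \(k\) is large. Then \(\log(n+1)=O(\log k)=o(m)\), so \Cref{thm:random-projective-code-bound} gives
\[
c_{m,n}\leq\tfrac\pi2+\arcsin\bigl(\min\{\tau_{m,n},1\}\bigr)\quad\text{with}\quad \tau_{m,n}\to0 .
\]
Because \(r>\tfrac\pi2\) strictly, for all large \(k\) we have \(c_{k-1,\lfloor\lambda k\rfloor-1}\leq r\). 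Hence \(B_r(k)\geq\lfloor\lambda k\rfloor\) for large \(k\), and therefore \(\Lambda(r)=\lim_k B_r(k)/k\geq\lambda\). Letting \(\lambda\to\infty\) gives \(\Lambda(r)=+\infty\).

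An alternative for the supercritical branch uses Fekete's sup formula instead of the limit. Pick a single seed pair \((k,\ell)\in\Gamma_r\) with \(\ell/k\geq\lambda\), which the random-code bound supplies for large \(k\). Then \(\Lambda(r)\geq B_r(k)/k\geq\lambda\).

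No step is a serious obstacle. The only points needing care are that the supercritical inequality must be strict (\(r>\tfrac\pi2\)) to absorb the vanishing \(\arcsin\tau\) term, and that the case \(k=1\) of the subcritical branch is covered by \(\zeta_0=\pi\).
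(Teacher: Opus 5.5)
Your proposal is correct and follows the paper's proof. The subcritical branch is the same argument: your order-closure step is just coordinate monotonicity combined with \(c_{k-1,k}=\zeta_{k-1}>\tfrac\pi2\). In the supercritical branch you invoke \Cref{thm:random-projective-code-bound} directly, whereas the paper cites its proportional consequence \Cref{cor:subexponential-convergence}, which only packages that same estimate.
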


\begin{proof}

{Suppose first that \(0\leq r\leq\tfrac{\pi}{2}\).  If
\(\ell>k\), coordinate monotonicity and
\eqref{eq:low-codimension-c-values} give
\(c_{k-1,\ell-1}\geq c_{k-1,k}=\zeta_{k-1}>\tfrac{\pi}{2}\).
Since \(c_{k-1,k-1}=0\), it follows that \(B_r(k)=k\) for every
\(k\), and hence \(\Lambda(r)=1\).}

{Now let \(\tfrac{\pi}{2}<r<\pi\), and fix \(A>1\).
Put \(N_k:=\lfloor Ak\rfloor\).  The proportional case of
\Cref{cor:subexponential-convergence}, applied with underlying-sphere
dimension \(k-1\) and odd-map-source dimension \(N_k-1\), gives
\(c_{k-1,N_k-1}\to\tfrac{\pi}{2}<r\).  Consequently,
\(B_r(k)\geq N_k\) for all sufficiently large \(k\), and hence
\(\Lambda(r)=\lim_{k\to\infty}\tfrac{B_r(k)}k\geq A\).  Since \(A>1\) was
arbitrary, \(\Lambda(r)=+\infty\).}

\end{proof}

Thus \(\Lambda(r)\) detects the transition at
\(r=\tfrac\pi2\), but it does not distinguish supercritical scales:
it equals \(+\infty\) throughout \((\tfrac\pi2,\pi)\).  We therefore
pass to the logarithmic growth rate of \(B_r(k)\).

\subsection{\texorpdfstring{Exponential capacity above
\(\tfrac\pi2\)}{Exponential capacity above pi/2}}

We measure the supercritical growth of \(B_r(k)\) by the following
normalization.

\begin{definition}[Exponential capacity]
\label{def:exponential-capacity}

For \(0\leq r<\pi\), define the \emph{exponential capacity at scale
\(r\)} by
\[
 \Xi(r):=
 \limsup_{k\to\infty}
 \frac1k\log B_r(k).
\]

\end{definition}

\begin{theorem}[Bounds for the exponential capacity]
\label{thm:exponential-capacity-bounds}

The function \(\Xi\) vanishes on the interval \([0,\tfrac{\pi}{2}]\).  For
\(\tfrac{\pi}{2}<r<\pi\), it satisfies the explicit two-sided bounds
\begin{equation}
\label{eq:exponential-capacity-bounds}
 \frac14\sin^2\left(r-\frac{\pi}{2}\right)
 \leq
 \liminf_{k\to\infty}\frac1k\log B_r(k)
 \leq
 \Xi(r)
 \leq
 \log\left(
  1+\csc\left(\frac{\pi-r}{4}\right)
 \right)
 <+\infty.
\end{equation}
In particular,
\[
 0<\Xi(r)<+\infty
 \qquad(\pi/2<r<\pi).
\]

\end{theorem}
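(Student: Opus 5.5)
The proof splits into three parts. The subcritical range is immediate, the lower bound comes from the random projective-code bound, and the upper bound comes from the covering side of the sandwich.

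\emph{Subcritical range.} For \(0\leq r\leq\tfrac\pi2\), \Cref{prop:capacity-dichotomy} gives \(B_r(k)=k\). Hence \(\tfrac1k\log B_r(k)=\tfrac{\log k}{k}\to0\), and so \(\Xi(r)=0\).

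\emph{Lower bound.} Fix \(\tfrac\pi2<r<\pi\) and put \(\varepsilon:=r-\tfrac\pi2\), so that \(s:=\sin\varepsilon\in(0,1]\). By \Cref{thm:random-projective-code-bound} with \(m=k-1\) and \(n=\ell-1\), we have \(c_{k-1,\ell-1}\leq r\) as soon as \(\tau_{k-1,\ell-1}\leq s\). Indeed, \(\arcsin\) is increasing, so \(\tfrac\pi2+\arcsin(\min\{\tau,1\})\leq\tfrac\pi2+\varepsilon\). The condition \(\tau_{k-1,\ell-1}\leq s\) reads \(4\log\ell+2\leq ks^2\), i.e. \(\ell\leq\exp\bigl((ks^2-2)/4\bigr)\).

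Choose \(\ell_k:=\lfloor\exp((ks^2-2)/4)\rfloor\). For large \(k\) this exceeds \(k\), so the hypothesis \(1\leq m<n\) holds. Then \(B_r(k)\geq\ell_k\), and therefore
\[
 \liminf_{k\to\infty}\frac1k\log B_r(k)\geq\frac{s^2}{4}=\frac14\sin^2\Bigl(r-\frac\pi2\Bigr).
\]
This lower bound is strictly positive.

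\emph{Upper bound.} Here I would use the right inequality of \Cref{prop:c-projective-sandwich}, \(\pi-c_{m,n}\leq2\,\mathrm{cov}_{\mathbb RP^m}(n)\). If \((k,\ell)\in\Gamma_r\) with \(\ell>k\), put \(m=k-1\) and \(n=\ell-1\). Then \(\pi-r\leq\pi-c_{m,n}\leq2\,\mathrm{cov}_{\mathbb RP^m}(n)\), so \(\mathrm{cov}_{\mathbb RP^m}(n)\geq\delta:=\tfrac{\pi-r}{2}\).

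By the comparison \(\mathrm{cov}_X(n)\leq\mathrm{pack}_X(n)\), there is an \(n\)-point projective code with minimum distance at least \(\delta\). Lifting it gives \(2n\) points on \(\Sph^m\) whose pairwise angles are at least \(\delta\), because \(d(x,\pm x')\geq\delta\).

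A volume (chordal ball) packing bound in \(\mathbb R^{m+1}\) now caps this count. Chordal distances are at least \(2\sin(\delta/2)\), so balls of radius \(\sin(\delta/2)\) around the points are disjoint and lie in the ball of radius \(1+\sin(\delta/2)\). This gives
\[
 2n\leq\bigl(1+\csc(\delta/2)\bigr)^{m+1}=\bigl(1+\csc\tfrac{\pi-r}{4}\bigr)^{k}.
\]
Hence \(B_r(k)\leq\max\{k,\,2(1+\csc\tfrac{\pi-r}{4})^k\}\) for all \(k\), and taking \(\limsup\tfrac1k\log\) yields the stated bound. The bound is finite because \(r<\pi\).

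\emph{Expected obstacle.} The main point to handle carefully is the constant bookkeeping in the packing estimate. One must use the fixed-cardinality convention and the lift to antipodal pairs correctly. One must also confirm that the covering inequality of \cite[Theorem~5.3]{adams2022gromov} applies for every \(n\geq m\geq1\); the degenerate case \(k=1\) is harmless since only large \(k\) matters. Any small loss in the constant (such as \(n\) versus \(n+1\), or a factor \(2\)) only affects the subexponential factor and not the rate.
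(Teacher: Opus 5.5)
Your proposal is correct and follows essentially the paper's proof. The lower bound comes from the random projective-code bound with $\tau_{k-1,\ell-1}\leq\sin(r-\tfrac\pi2)$; you take this threshold directly, whereas the paper uses strict slack $R<\tfrac14\cos^2(\pi-r)$ and lets $R$ increase. The upper bound comes from the covering half of \Cref{prop:c-projective-sandwich} plus a volumetric count. The only difference is that you run the count as a packing bound on the antipodal lift via $\mathrm{cov}\le\mathrm{pack}$. The paper instead builds a $\rho$-net of size at most $(1+\csc(\rho/2))^k$, argues by contradiction, and lets $\rho\uparrow\tfrac{\pi-r}{2}$; both routes give the same constant $\log(1+\csc\tfrac{\pi-r}{4})$.
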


\begin{proof}
\begingroup
\setlength{\abovedisplayskip}{4pt}
\setlength{\belowdisplayskip}{4pt}
\setlength{\abovedisplayshortskip}{2pt}
\setlength{\belowdisplayshortskip}{2pt}
If \(0\leq r\leq\tfrac{\pi}{2}\), then
\Cref{prop:capacity-dichotomy} gives \(B_r(k)=k\) for every \(k\), and hence
\(\Xi(r)=0\).

Now fix \(\tfrac{\pi}{2}<r<\pi\), and put
\(\alpha:=\pi-r\in(0,\pi/2)\).
We first prove the lower bound.  Choose
\(0<R<\tfrac{\cos^2\alpha}{4}\),
\(N_k:=\lfloor e^{Rk}\rfloor\).
For all sufficiently large \(k\), one has \(N_k>k\) and
\(
 \tau_{k-1,N_k-1}^2
 =\frac{4\log N_k+2}{k}
 <\cos^2\alpha.
\)
The strict inequality also gives \(\tau_{k-1,N_k-1}<1\), so
\Cref{thm:random-projective-code-bound} yields
{\small\(
 c_{k-1,N_k-1}
 \leq
 \frac{\pi}{2}+\arcsin\!\left(\tau_{k-1,N_k-1}\right)
 <
 \frac{\pi}{2}+\arcsin(\cos\alpha)
 =r.
\)}
It follows that, for every sufficiently large \(k\),
\[
 B_r(k)\geq N_k=\left\lfloor e^{Rk}\right\rfloor.
\]
Since this estimate holds for every sufficiently large \(k\), it gives
\(\liminf_{k\to\infty}\tfrac1k\log B_r(k)\geq R\).
Letting \(R\uparrow\tfrac{\cos^2\alpha}{4}\) yields
{\small\(
 \liminf_{k\to\infty}\frac1k\log B_r(k)
 \geq
 \frac{\cos^2\alpha}{4}
 =
 \frac14\sin^2\left(r-\frac{\pi}{2}\right).
\)}

For the upper bound, we use the standard volumetric net estimate
\cite[Corollary~4.2.11, pp.~112--113]{vershynin2026high}.  For every
\(0<\rho<\pi\), the sphere \(\Sph^{k-1}\), and hence its antipodal
quotient \(\mathbb RP^{k-1}\), admits a geodesic \(\rho\)-net of
cardinality at most
\[
 \left(1+\csc\left(\frac{\rho}{2}\right)\right)^k.
\]
By \eqref{eq:angular-chordal-conversion}, put
\(\eps=2\sin(\tfrac\rho2)\) and take a maximal set of points on
\(\Sph^{k-1}\) separated by Euclidean distance greater than \(\eps\).
The Euclidean balls of radius \(\tfrac{\eps}{2}\) about these points are
pairwise disjoint and lie in the ball of radius \(1+\tfrac{\eps}{2}\) in
\(\mathbb R^k\).  Comparing Euclidean volumes bounds the cardinality by
\(
 \left(1+\frac2\eps\right)^k
 =
 \left(1+\csc\left(\frac{\rho}{2}\right)\right)^k.
\)
Maximality makes the set an \(\eps\)-net in chordal distance, hence a
\(\rho\)-net in geodesic distance.  Projection to real projective
space proves the displayed cardinality bound.

For \(k\geq2\), choose \(0<\rho<\tfrac{\alpha}{2}\), and let
\(n=B_r(k)\).  Since
\(c_{k-1,n-1}\leq r\), the right-hand inequality in
\Cref{prop:c-projective-sandwich}, applied to \((k-1,n-1)\), gives
{\small\(
 \alpha
 =
 \pi-r
 \leq
 \pi-c_{k-1,n-1}
 \leq
 2\mathrm{cov}_{\mathbb RP^{k-1}}(n-1).
\)}
If \(n-1\) were at least the cardinality of the net constructed above,
then
\(\mathrm{cov}_{\mathbb RP^{k-1}}(n-1)\leq\rho<\tfrac{\alpha}{2}\), a
contradiction.  Hence
\(
 B_r(k)
 \leq
 \left(1+\csc\left(\frac{\rho}{2}\right)\right)^k.
\)
Taking logarithms and the upper limit gives
\(
 \Xi(r)
 \leq
 \log\left(1+\csc\left(\frac{\rho}{2}\right)\right).
\)
Finally let \(\rho\uparrow\tfrac\alpha2\) to obtain the upper bound in
\eqref{eq:exponential-capacity-bounds}.
\endgroup
\end{proof}

\Needspace{10\baselineskip}
\subsection{\texorpdfstring{Fixed-scale coindex phase
transition}{Fixed-scale coindex phase transition}}

The dimension-frontier function controls fixed-scale
coindex, while the exponential capacity records the exponential growth
rate of \(B_r(k)\) as \(k\to\infty\).
If \(n\geq m\), any odd map
\(\Sph^n\to Y_{m,r}\) forces \(c_{m,n}\leq r\), using
\eqref{eq:c-metric-equality} in the metric case, and hence
\(n+1\leq B_r(m+1)\).  If \(n<m\), the same inequality is automatic
because \(B_r(m+1)\geq m+1\).  Therefore
\begin{equation}
\label{eq:coindex-capacity-frontier}
 \coind(Y_{m,r})\leq B_r(m+1)-1.
\end{equation}
The reverse implication need not follow merely from the definition of
\(B_r\), because a threshold infimum need not be attained at its
endpoint.  The strict slack in the random-code construction supplies
the required lower maps.

\begin{corollary}[Fixed-scale coindex phase transition]
\label{cor:fixed-scale-dimensional-amplification}
For every \(m\geq0\) and \(0<r\leq\tfrac{\pi}{2}\),
\[
 \coind(Y_{m,r})=m.
\]
At \(r=0\), one has
\[
 \coind\bigl(\VRm(\Sph^m;0)\bigr)=m,
 \qquad
 \coind\bigl(\VR(\Sph^m;0)\bigr)=0.
\]
If \(\tfrac{\pi}{2}<r<\pi\), then for every
\(0<R_-<\tfrac14\sin^2(r-\tfrac{\pi}{2})\)
and every
\(R_+>\log(1+\csc(\tfrac{\pi-r}{4}))\),
all sufficiently large \(m\) satisfy
\begin{equation}
\label{eq:fixed-scale-coindex-amplification}
 \left\lfloor e^{R_-(m+1)}\right\rfloor-1
 \leq
 \coind(Y_{m,r})
 \leq
 e^{R_+(m+1)}-1.
\end{equation}
\end{corollary}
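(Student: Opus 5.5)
Three regimes will be treated separately: small positive scales, the endpoint $r=0$, and supercritical scales.

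\emph{Small positive scales.} For $0<r\leq\tfrac\pi2$, the upper bound comes from \eqref{eq:coindex-capacity-frontier} together with \Cref{prop:capacity-dichotomy}. They give $\coind(Y_{m,r})\leq B_r(m+1)-1=m$.

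For the lower bound I need an odd map $\Sph^m\to Y_{m,r}$ for every $r>0$.
\begin{itemize}
\item In the metric model, $x\mapsto\delta_x$ is an odd isometric embedding $\Sph^m\to\VRm(\Sph^m;0)\subseteq\VRm(\Sph^m;r)$. This gives coindex at least $m$ even at $r=0$.
\item In the simplicial model with $r>0$, I use the standard fact that $\VR(\Sph^m;r)$ admits an odd map from $\Sph^m$. Concretely, take a fine antipodally symmetric triangulation of $\Sph^m$ with mesh below $r$, for instance the radial projection of an iterated barycentric subdivision of $\partial C_{m+1}$. Its vertex inclusion is then an odd simplicial map into $\VR(\Sph^m;r)$.
\end{itemize}
This settles $0<r\leq\tfrac\pi2$ for both models.

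\emph{The endpoint $r=0$.} In the metric model the argument above gives $\coind\geq m$. The upper bound $\coind\leq m$ should hold because $\VRm(\Sph^m;0)$ is the set of Dirac masses, which is isometric to $\Sph^m$ via $\delta_x\mapsto x$. Borsuk--Ulam then forbids an odd map $\Sph^{m+1}\to\Sph^m$.

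In the simplicial model, $\VR(\Sph^m;0)$ is the discrete set $\Sph^m$ with the discrete topology. A continuous map from the connected space $\Sph^k$, $k\geq1$, is constant, so it cannot be odd. An odd map from $\Sph^0$ exists. Hence the coindex is $0$.

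\emph{Supercritical scales.} Fix $\tfrac\pi2<r<\pi$.

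For the upper bound, \eqref{eq:coindex-capacity-frontier} gives $\coind(Y_{m,r})\leq B_r(m+1)-1$. The proof of \Cref{thm:exponential-capacity-bounds} shows that $B_r(k)\leq(1+\csc(\rho/2))^k$ for each $\rho<\alpha/2$, where $\alpha=\pi-r$. Given $R_+>\log(1+\csc(\alpha/4))$, I choose $\rho$ close to $\alpha/2$ so that $\log(1+\csc(\rho/2))<R_+$. This yields the upper bound in \eqref{eq:fixed-scale-coindex-amplification}; in fact it holds for all $m\geq1$.

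The lower bound is the delicate step, because $c_{m,n}\leq r$ alone does not give an odd map at scale exactly $r$. I will instead use the explicit construction.
\begin{itemize}
\item Fix $R_-<\tfrac14\cos^2\alpha$ and set $N=\lfloor e^{R_-(m+1)}\rfloor$.
\item For large $m$, the computation in the proof of \Cref{thm:exponential-capacity-bounds} gives $N\geq\max\{m+1,2\}$ and $\tau_{m,N-1}<\cos\alpha$.
\item The proof of \Cref{thm:random-projective-code-bound} then produces $N$ lines in $\mathbb R^{m+1}$ with coherence $\vartheta\leq\tau_{m,N-1}$.
\item The cross-polytope map in the proof of \Cref{prop:projective-code-seeds} gives an explicit odd map $\Sph^{N-1}\to Y_{m,\arccos(-\vartheta)}$ in both models, with $\arccos(-\vartheta)<\tfrac\pi2+\arcsin(\cos\alpha)=r$.
\item Composing with the filtration inclusion gives an odd map into $Y_{m,r}$. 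Hence $\coind(Y_{m,r})\geq N-1$, which is the lower bound.
\end{itemize}

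The main point of care is that the lower bounds must rest on explicit maps with strict slack, and not on the threshold infimum.
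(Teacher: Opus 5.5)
Your proposal is correct and follows the paper's proof. The upper bounds come from \(\coind(Y_{m,r})\leq B_r(m+1)-1\) together with the capacity dichotomy and the projective net estimate, and the lower bounds come from the Dirac map and the random projective code fed into the cross-polytope construction.

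The differences are only local:
\begin{itemize}
\item You build the odd map \(\Sph^m\to\VR(\Sph^m;r)\) from a fine symmetric triangulation, and you use the explicit cross-polytope maps in both models. So you never need \(c_{m,m}=0\), \(c^{\mathrm{met}}_{m,n}=c_{m,n}\), or the threshold-infimum argument. Note that with explicit maps the strict slack you emphasize is harmless but not actually needed.
\item At \(r=0\) you use Borsuk--Ulam via \(\VRm(\Sph^m;0)\cong\Sph^m\).
\item You extract the finite net bound from the proof, so your upper estimate holds for every \(m\geq1\) rather than only for large \(m\).
\end{itemize}
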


\begin{proof}
If \(0\leq r\leq\tfrac{\pi}{2}\), then
\Cref{prop:capacity-dichotomy} gives \(B_r(m+1)=m+1\), so
\eqref{eq:coindex-capacity-frontier} gives
\(\coind(Y_{m,r})\leq m\).

For the metric thickening, the Dirac map
\(x\mapsto\delta_x\) is continuous and odd at every scale, and hence
\(\coind(\VRm(\Sph^m;r))\geq m\).  For the ordinary complex and
\(r>0\), the identity \(c_{m,m}=0\) yields some \(0\leq s<r\) for
which \(\coind(\VR(\Sph^m;s))\geq m\); the filtration inclusion from
scale \(s\) to scale \(r\) gives the same lower bound at scale \(r\).
This proves equality for both models when
\(0<r\leq\tfrac{\pi}{2}\), as well as the asserted equality for the
metric thickening at \(r=0\).  Finally,
\(\VR(\Sph^m;0)\) is a discrete space.  It admits an odd map from
\(\Sph^0\), but no odd map from the connected sphere \(\Sph^k\) for
\(k\geq1\), so its coindex is zero.

Now fix \(r,R_-\), and \(R_+\) as in the statement.
Choose
\(R_-<R<\tfrac14\sin^2(r-\tfrac{\pi}{2})\).
The strict random-code construction in the lower-bound proof of
\Cref{thm:exponential-capacity-bounds}, with \(k=m+1\), gives
\(c_{m,\,\lfloor e^{R(m+1)}\rfloor-1}<r\)
for every sufficiently large \(m\).  More precisely, the
projective-code theorem produces an odd map into the metric thickening
at a scale strictly smaller than \(r\).  Restriction to an equator gives
an odd map
\[
 \Sph^{\lfloor e^{R_-(m+1)}\rfloor-1}
 \longrightarrow
 \VRm(\Sph^m;r).
\]
Since \(c^{\mathrm{met}}_{m,n}=c_{m,n}\), the same strict threshold
inequality gives an odd map into the ordinary complex at some scale
smaller than \(r\), and therefore at scale \(r\).  This proves the
lower coindex bound for both models.

Finally, the upper bound in
\eqref{eq:exponential-capacity-bounds} and the choice of \(R_+\) imply
\(B_r(m+1)\leq e^{R_+(m+1)}\) for all sufficiently large \(m\).
Equation \eqref{eq:coindex-capacity-frontier} proves the upper coindex
bound in \eqref{eq:fixed-scale-coindex-amplification}.
\end{proof}

\begin{corollary}[Explicit lower coindex rate above
\(\tfrac\pi2\)]
\label{cor:fixed-scale-coindex-rate}
For every \(0<\varepsilon<\tfrac\pi2\),
\[
 \liminf_{m\to\infty}\frac1m
 \log\bigl(\coind(Y_{m,\pi/2+\varepsilon})+1\bigr)
 \geq\frac14\sin^2\varepsilon.
\]

\end{corollary}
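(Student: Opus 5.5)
The plan is to read the claim off the lower bound in \eqref{eq:fixed-scale-coindex-amplification} of \Cref{cor:fixed-scale-dimensional-amplification}, applied at \(r:=\tfrac\pi2+\varepsilon\). Since \(0<\varepsilon<\tfrac\pi2\), this \(r\) lies in \((\tfrac\pi2,\pi)\), and \(r-\tfrac\pi2=\varepsilon\). So for every \(0<R_-<\tfrac14\sin^2\varepsilon\) and all sufficiently large \(m\), that corollary gives
\[
 \coind(Y_{m,r})+1\geq\left\lfloor e^{R_-(m+1)}\right\rfloor .
\]
This holds for both choices of the model \(Y_{m,r}\), which matches the convention in the statement.

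Next, take logarithms and use \(\lfloor x\rfloor\geq x/2\) for \(x\geq1\). This gives
\[
 \frac1m\log\bigl(\coind(Y_{m,r})+1\bigr)\geq \frac{R_-(m+1)-\log 2}{m}.
\]
The right-hand side tends to \(R_-\) as \(m\to\infty\). Hence the \(\liminf\) in the statement is at least \(R_-\).

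Finally, let \(R_-\uparrow\tfrac14\sin^2\varepsilon\) to obtain the stated bound.

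There is no real obstacle here. The only points that need care are that \(\coind\) could be \(+\infty\) in principle, which only helps the inequality, and that the floor function causes no loss in the limit. Both are handled by the elementary estimates above.
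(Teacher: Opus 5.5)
Your proposal is correct and is the paper's proof: apply the lower estimate of \Cref{cor:fixed-scale-dimensional-amplification} at \(r=\tfrac\pi2+\varepsilon\) for each \(R_-<\tfrac14\sin^2\varepsilon\), take logarithms and lower limits, and let \(R_-\uparrow\tfrac14\sin^2\varepsilon\). Your explicit handling of the floor (via \(\lfloor x\rfloor\geq x/2\) for \(x\geq1\)) and of the \(m\) versus \(m+1\) normalization fills in details the paper leaves implicit.
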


\begin{proof}
The lower estimate in
\Cref{cor:fixed-scale-dimensional-amplification} holds for every
\(R_-<\tfrac14\sin^2\varepsilon\).  Taking logarithms and lower
limits, and then letting
\(R_-\uparrow\tfrac14\sin^2\varepsilon\), proves the claim.
\end{proof}

For example, at scale \(2\pi/3\) the preceding
exponential rate is at least \(1/16\).

Thus \(r=\tfrac\pi2\) separates at-most-linear coindex growth
from exponential coindex growth, with explicit supercritical rates for
both Vietoris--Rips models.

\endgroup

\section{Boundary regimes: sublinear gaps and fixed rows}
\label{sec:fixed-row-terminal-growth}

\begingroup
\setlength{\abovedisplayskip}{6pt plus 2pt minus 2pt}
\setlength{\belowdisplayskip}{6pt plus 2pt minus 2pt}
\setlength{\abovedisplayshortskip}{3pt plus 2pt minus 1pt}
\setlength{\belowdisplayshortskip}{4pt plus 2pt minus 1pt}

The capacity theory keeps the scale fixed while both dimensions grow.
We now examine two boundary directions of the triangular \(c\)-matrix:
a growing but sublinear gap from the diagonal, and
the limit \(n\to\infty\) with \(m\) fixed.  They are
controlled by different inputs---finite joins of
low-gap seeds in the first case, and projective packing and covering in
the second.

\subsection{Near-diagonal boundary: varying sublinear gaps}

We first consider gaps \(d=d(m)\) that diverge but remain sublinear in
\(m\).

\begin{corollary}[Varying sublinear gaps]
\label{cor:c-sublinear-gap-asymptotics}
Suppose that \(d=d(m)\to\infty\) and \(d=o(m)\).  Then
\(c_{m,m+d(m)}\longrightarrow\tfrac{\pi}{2}\).
More precisely,
\begin{equation}
\label{eq:c-sublinear-gap-rate}
 0\leq c_{m,m+d}-\frac{\pi}{2}
 \leq
 \frac{\lceil d/2\rceil}{m+1}
 +
 O\left(\frac{d^2}{m^2}\right).
\end{equation}
The \(O\)-constant may be chosen uniformly along all such
sublinear-gap sequences.
\end{corollary}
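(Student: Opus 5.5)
The plan is to obtain the upper bound in \eqref{eq:c-sublinear-gap-rate} by joining copies of gap-two seeds, using \Cref{thm:c-finite-join}. The lower bound is immediate: by coordinate monotonicity and \eqref{eq:low-codimension-c-values}, \(c_{m,m+d}\geq c_{m,m+1}=\zeta_m>\tfrac\pi2\) whenever \(d\geq1\). Once \eqref{eq:c-sublinear-gap-rate} is proved, the convergence \(c_{m,m+d(m)}\to\tfrac\pi2\) follows because \(d=o(m)\).

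For the upper bound, put \(q:=\lceil d/2\rceil\) and \(j:=\lfloor (m+1)/q\rfloor-1\geq0\). Apply \Cref{thm:c-finite-join} to \(q\) copies of the pair \((j,j+2)\). By \eqref{eq:low-codimension-c-values} each copy has \(c_{j,j+2}=\zeta_j\); for \(j=0\) this value is \(\pi\) and the bound is trivial. The join gives
\[
c_{q(j+1)-1,\;q(j+3)-1}\leq\zeta_j .
\]
The first index satisfies \(q(j+1)-1\leq m\), so \Cref{cor:c-diagonal-monotonicity} with shift \(m+1-q(j+1)\) gives \(c_{m,m+2q}\leq\zeta_j\). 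Since \(2q\geq d\), row monotonicity then yields \(c_{m,m+d}\leq\zeta_j\).

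It remains to estimate \(\zeta_j-\tfrac\pi2=\arcsin\bigl(\tfrac1{j+1}\bigr)\). The inequality \(\arcsin x\leq x+Cx^3\) holds on \([0,1]\) with an absolute constant \(C\). From \(j+1\geq\tfrac{m+1}{q}-1=\tfrac{m+1-q}{q}\), whenever \(q\leq\tfrac{m+1}2\) we get
\[
\frac1{j+1}\leq\frac{q}{m+1-q}=\frac q{m+1}+\frac{q^2}{(m+1)(m+1-q)}\leq\frac q{m+1}+\frac{2q^2}{(m+1)^2}.
\]
The cubic term is at most \(C(2q/(m+1))^3\), which is also \(O(q^2/m^2)\). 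Finally \(q\leq d/2+1\) and \(d\geq1\), so \(q^2\leq 4d^2\), and the remainder is \(O(d^2/m^2)\).

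The main point needing care is uniformity of the \(O\)-constant, which I plan to handle by a case split. In the range \(q\leq(m+1)/2\), every constant above is absolute. In the complementary range \(q>(m+1)/2\), we have \(d^2/m^2\) bounded below by an absolute positive constant, so the trivial bound \(c_{m,m+d}-\tfrac\pi2\leq\tfrac\pi2\) is absorbed by enlarging the constant. The constant therefore depends on nothing, and in particular not on the sequence \(d(m)\).
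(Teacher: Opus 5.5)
Your proposal is correct and follows essentially the same route as the paper. Your \(j+1=\lfloor (m+1)/q\rfloor\) is the paper's \(L\), and the paper likewise joins \(\lceil d/2\rceil\) copies of the gap-two seed \(c_{L-1,L+1}=\zeta_{L-1}\), applies \(m+1-qL\) diagonal shifts, uses row monotonicity, and expands \(\arcsin(1/L)=\tfrac{K}{N}+O(\tfrac{K^2}{N^2})\). Your explicit case split at \(q\leq\tfrac{m+1}{2}\), together with the absolute constant in \(\arcsin x\leq x+Cx^3\), makes the claimed uniformity of the \(O\)-constant more transparent than the paper's expansion, which leaves that uniformity implicit.
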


\begin{remark}[Comparison with the subexponential bound]
\label{rem:sublinear-gap-comparison}
As observed in \Cref{rem:near-diagonal-specialization},
\Cref{cor:subexponential-convergence} already implies the convergence
in the preceding corollary.  The additional conclusion here is the
explicit \(O(d/m)\) estimate, derived solely from the finite join
inequality and the exact gap-two values.  This estimate is complementary
to, rather than uniformly sharper than, the random-projective-code bound
underlying \Cref{cor:subexponential-convergence}.
\end{remark}

\begin{proof}[Proof of
\Cref{cor:c-sublinear-gap-asymptotics}]
Put
\[
 N:=m+1,
 \qquad
 K:=\left\lceil\frac d2\right\rceil,
 \qquad
 L:=\left\lfloor\frac NK\right\rfloor,
 \qquad
 s:=N-KL.
\]
The hypotheses give \(K=o(N)\), so \(L\to\infty\).  Apply
\Cref{thm:c-finite-join} to \(K\) copies of the exact gap-two seed
\(c_{L-1,L+1}=\zeta_{L-1}\)
from \eqref{eq:low-codimension-c-values}, and then apply \(s\)
diagonal suspensions.  This gives
\[
 c_{m,m+2K}
 =
 c_{N-1,N+2K-1}
 \leq
 \zeta_{L-1}
 =
 \frac{\pi}{2}+\arcsin\left(\frac1L\right).
\]
Since \(d\leq2K\), the coordinate monotonicity
\eqref{eq:c-coordinate-monotonicities} yields
\(c_{m,m+d}\leq c_{m,m+2K}\).
Moreover,
\(\tfrac1L=\tfrac KN+O(\tfrac{K^2}{N^2})\), and therefore
\(\arcsin(\tfrac1L)=\tfrac KN+O(\tfrac{K^2}{N^2})\).
Since \(K=\lceil\tfrac d2\rceil\) and \(N=m+1\), this proves the upper
bound in \eqref{eq:c-sublinear-gap-rate}.  The lower bound follows
from \(c_{m,m+d}\geq c_{m,m+1}=\zeta_m>\tfrac{\pi}{2}\).
The right-hand side of \eqref{eq:c-sublinear-gap-rate} tends to zero,
which proves the convergence.
\end{proof}

\subsection{Fixed-row boundary: packing and covering}

We now turn to the opposite boundary direction, where \(m\)
is fixed and \(n\to\infty\).  The projective packing--covering sandwich
of \Cref{prop:c-projective-sandwich} reduces this problem to the local
packing and covering geometry of \(\mathbb RP^m\),
yielding the order of \(\pi-c_{m,n}\) as
\(n\to\infty\) and the corresponding coindex growth as
\(r\uparrow\pi\).

For \(0<r<\pi/2\), a geodesic ball in the round manifold
\(\mathbb RP^m\) lifts isometrically to a spherical ball. Hence
\[
 \operatorname{vol}(B(x,r))
 =
 \operatorname{vol}(S^{m-1})
 \int_0^r \sin^{m-1}(t)\,dt
 =
 \Theta_m(r^m),
\]
uniformly in \(x\). A maximal-net argument therefore gives
\[
 \mathrm{pack}_{\mathbb RP^m}(k),
 \mathrm{cov}_{\mathbb RP^m}(k)
 =\Theta_m\bigl(k^{-1/m}\bigr).
\]
Together with \Cref{prop:c-projective-sandwich}, we therefore
obtain the following.

\begin{proposition}[Asymptotics of \(\pi-c_{m,n}\) for fixed \(m\)]
\label{prop:c-fixed-row-rate}
For every fixed \(m\geq1\),
\[
 c_{m,n}
 =
 \pi-\Theta_m\left(n^{-1/m}\right)
 \qquad(n\to\infty).
\]
\end{proposition}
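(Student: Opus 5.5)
The plan is to read off the asymptotics directly from the projective packing--covering sandwich of \Cref{prop:c-projective-sandwich},
\[
 \mathrm{pack}_{\mathbb RP^m}(n+1)\leq\pi-c_{m,n}\leq2\,\mathrm{cov}_{\mathbb RP^m}(n),
\]
together with the volume estimate \(\mathrm{pack}_{\mathbb RP^m}(k),\mathrm{cov}_{\mathbb RP^m}(k)=\Theta_m(k^{-1/m})\) recorded just before the statement. It then remains to check that \((n+1)^{-1/m}\) and \(n^{-1/m}\) are comparable, with constants depending only on \(m\), which is immediate for \(n\geq1\).

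I would spell out the volume estimate, since it is the only real content. Fix \(m\geq1\) and write \(V_m(\rho):=\operatorname{vol}(B(x,\rho))\) in \(\mathbb RP^m\). For \(0<\rho<\tfrac\pi2\) the ball lifts isometrically to a spherical cap, so \(V_m(\rho)=\Theta_m(\rho^m)\) uniformly in \(x\). For larger \(\rho\) we have \(V_m(\rho)\) bounded between positive constants depending only on \(m\).

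For the covering upper bound, take a maximal \(\rho\)-separated set \(A\). The balls \(B(a,\rho/2)\) are disjoint, which gives \(|A|\leq\operatorname{vol}(\mathbb RP^m)/V_m(\rho/2)=O_m(\rho^{-m})\). Maximality makes \(A\) a \(\rho\)-net. Choosing \(\rho=C_mk^{-1/m}\) with \(C_m\) large forces \(|A|\leq k\), hence \(\mathrm{cov}(k)\leq C_mk^{-1/m}\).

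For the packing lower bound, start from any \(k\) points whose \(\rho\)-balls cover the space. Covering gives \(kV_m(\rho)\geq\operatorname{vol}(\mathbb RP^m)\), and therefore \(\mathrm{cov}(k)\geq c_mk^{-1/m}\). The comparison \(\tfrac12\mathrm{pack}(k+1)\leq\mathrm{cov}(k)\leq\mathrm{pack}(k)\) then transfers both bounds to packing. This yields \(\mathrm{pack}(n+1)\geq c'_m n^{-1/m}\) and \(\mathrm{cov}(n)\leq C_mn^{-1/m}\).

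Substituting into the sandwich gives \(c'_m n^{-1/m}\leq\pi-c_{m,n}\leq2C_mn^{-1/m}\) for all \(n\geq m\), which is the claim. The only point needing care is uniformity of the cap-volume comparison in \(\rho\) once \(\rho\) leaves the injectivity range. I would handle this by restricting to \(n\) large enough that all radii used are below \(\tfrac\pi4\); the finitely many remaining \(n\) do not affect a \(\Theta\) statement as \(n\to\infty\). The main obstacle is therefore just bookkeeping of constants. All the topological input is already packaged in \Cref{prop:c-projective-sandwich}, whose covering side rests on \cite[Theorem~5.3]{adams2022gromov}.
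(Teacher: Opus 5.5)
Your proposal is correct and matches the paper's route. The paper obtains the proposition directly from the sandwich in \Cref{prop:c-projective-sandwich} together with the estimate $\mathrm{pack}_{\mathbb RP^m}(k),\mathrm{cov}_{\mathbb RP^m}(k)=\Theta_m(k^{-1/m})$, which follows from uniform cap volumes in $\mathbb RP^m$ and a maximal-net argument. Your write-up just unpacks that volume and net step, and you correctly note that only $\mathrm{cov}(k)\leq\mathrm{pack}(k)$ is needed to transfer the lower bound to packing.
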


\begin{corollary}[Coindex growth as \(r\uparrow\pi\) for fixed \(m\)]
\label{cor:coindex-fixed-sphere-rate}

For every fixed \(m\geq1\),
\[
 \coind\bigl(\VR(\Sph^m;r)\bigr)
 =
 \Theta_m\bigl((\pi-r)^{-m}\bigr),
 \qquad
 \coind\bigl(\VRm(\Sph^m;r)\bigr)
 =
 \Theta_m\bigl((\pi-r)^{-m}\bigr)
 \qquad(r\uparrow\pi).
\]

\end{corollary}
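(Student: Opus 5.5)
The plan is to derive both coindex asymptotics from \Cref{prop:c-fixed-row-rate} (equivalently, from the sandwich in \Cref{prop:c-projective-sandwich} together with the volume estimates for \(\mathbb RP^m\)). We fix \(m\geq1\) and write \(n(r):=\coind(Y_{m,r})\) for either model. The two directions are handled separately, in the same way as in \Cref{cor:fixed-scale-dimensional-amplification}: an upper bound through the frontier, and a lower bound through strict inequalities that produce actual maps.

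\emph{Upper bound.} Suppose there is an odd map \(\Sph^n\to Y_{m,r}\) with \(n\geq m\). Then \(c_{m,n}\leq r\); in the metric case this uses \eqref{eq:c-metric-equality}. So \(\pi-r\leq\pi-c_{m,n}\leq2\mathrm{cov}_{\mathbb RP^m}(n)\). Since \(\mathrm{cov}_{\mathbb RP^m}(n)\leq C_m n^{-1/m}\), we get \(n\leq (2C_m)^m(\pi-r)^{-m}\). Hence \(n(r)=O_m((\pi-r)^{-m})\) for both models, and this bound is finite because \(r<\pi\).

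\emph{Lower bound.} Given \(r<\pi\), choose \(n\) as large as possible with \(\mathrm{pack}_{\mathbb RP^m}(n+1)>\pi-r\). Using \(\mathrm{pack}_{\mathbb RP^m}(k)\geq c_m k^{-1/m}\), every \(n+1\leq (c_m/(\pi-r))^m\) qualifies, up to a strict-inequality adjustment such as replacing \(c_m\) by \(c_m/2\). So we can take \(n\geq \tfrac12(c_m/2)^m(\pi-r)^{-m}\) once \(r\) is close to \(\pi\). Next, take an optimal \((n+1)\)-point projective code, with coherence \(\vartheta=\cos(\mathrm{pack})<\cos(\pi-r)\). The cross-polytope construction in \Cref{prop:projective-code-seeds} then gives explicit odd maps \(\Sph^n\to\VR(\Sph^m;\arccos(-\vartheta))\) and \(\Sph^n\to\VRm(\Sph^m;\arccos(-\vartheta))\). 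The scale \(\arccos(-\vartheta)\) is \(<r\), and composing with the filtration inclusion gives odd maps at scale \(r\) for both models. Because the maps are built directly, attainment of the infimum is never needed; this is the point flagged before \Cref{cor:fixed-scale-dimensional-amplification}. For the left-hand packing inequality I would rely on the sandwich proof, which uses the pair \((D,N)=(m+1,n+1)\) and requires \(N\geq D\); this holds for \(r\) near \(\pi\).

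\emph{Main obstacle.} The substantive input is the packing and covering estimate \(\mathrm{pack}_{\mathbb RP^m}(k),\mathrm{cov}_{\mathbb RP^m}(k)=\Theta_m(k^{-1/m})\), which is only sketched in the text. I would make it explicit with the maximal-separated-set argument. For the lower bound on packing, a maximal \(\rho\)-separated set is a \(\rho\)-net, so the balls of radius \(\rho\) around it cover \(\mathbb RP^m\). This gives at least \(\operatorname{vol}(\mathbb RP^m)/\max_x\operatorname{vol}B(x,\rho)\gtrsim\rho^{-m}\) points, and after removing extra points we may take exactly \(k\) of them. For the upper bound on covering, a \(\rho\)-net of size \(k\) exists once \(k\gtrsim\rho^{-m}\), because a maximal \(\rho\)-separated set has disjoint \(\rho/2\)-balls. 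Both steps use the uniform ball-volume asymptotic \(\Theta_m(\rho^m)\) for \(\rho<\pi/2\), together with monotonicity of pack and cov in \(k\) to pass from dyadic to all \(k\). After that, the remaining work is bookkeeping of constants and the restriction to \(r\) near \(\pi\), since \(\Theta\) is an asymptotic statement.
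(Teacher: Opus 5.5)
Your proposal is correct and follows essentially the paper's argument. The upper bound (an odd map gives \(c_{m,n}\leq r\), hence \(\pi-r\leq 2\mathrm{cov}_{\mathbb RP^m}(n)\)) is identical to the paper's. For the lower bound, the paper takes \(n_r=\lfloor (C_m^-/(2(\pi-r)))^m\rfloor\) from \Cref{prop:c-fixed-row-rate}, obtains \(c_{m,n_r}<r\), and then uses the infimum definition together with the filtration inclusion. You instead unpack the packing side and build the cross-polytope map at scale \(\arccos(-\vartheta)<r\) directly; this is equivalent, since the paper's strict inequality already avoids any attainment issue.
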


\begin{proof}
By \Cref{prop:c-fixed-row-rate}, there are constants
\(C_m^{-},C_m^{+}>0\) such that, for all sufficiently large \(n\),
\[
 C_m^{-} n^{-1/m}
 \leq
 \pi-c_{m,n}
 \leq
 C_m^{+} n^{-1/m}.
\]
{If there is an odd map from \(\Sph^n\) into either
Vietoris--Rips model at scale \(r\), then \(c_{m,n}\leq r\), using
\eqref{eq:c-metric-equality} for the metric model.  The upper bound in
the preceding display gives
\(\pi-r\leq\pi-c_{m,n}\leq C_m^{+}n^{-1/m}\).  Thus every admissible
\(n\) is \(O_m((\pi-r)^{-m})\), proving the upper bound for both
coindices.}

{Conversely, for \(r<\pi\) sufficiently close to
\(\pi\), take
\(n_r:=\left\lfloor
(\tfrac{C_m^{-}}{2(\pi-r)})^m\right\rfloor\).  The lower bound in
the preceding display gives
\(\pi-c_{m,n_r}\geq2(\pi-r)\), and hence \(c_{m,n_r}<r\).  The
definition of \(c_{m,n_r}\) therefore gives an odd map into
\(\VR(\Sph^m;s)\) at some scale \(s<r\); filtration inclusion gives the
required map at scale \(r\).  Likewise,
\(c^{\mathrm{met}}_{m,n_r}=c_{m,n_r}<r\) gives an odd map into the metric
thickening at some scale below \(r\), and then at scale \(r\).  Hence
both coindices are \(\Omega_m((\pi-r)^{-m})\).}
\end{proof}

\begin{remark}[Infinitely many equivariant homotopy types]
Since coindex is invariant under \(C_2\)-equivariant homotopy
equivalence and, by \Cref{cor:coindex-fixed-sphere-rate}, is unbounded
as \(r\uparrow\pi\), each of the two Vietoris--Rips filtrations realizes
infinitely many distinct \(C_2\)-equivariant homotopy types at scales
accumulating at \(\pi\).
\end{remark}

\endgroup

\begingroup
\let\paperBsubsection\subsection
\renewcommand{\subsection}{\FloatBarrier\paperBsubsection}
\let\paperBfigure\figure
\let\endpaperBfigure\endfigure
\renewenvironment{figure}[1][]{\paperBfigure[htbp]}{\endpaperBfigure}
\setlength{\abovedisplayskip}{6pt plus 2pt minus 2pt}
\setlength{\belowdisplayskip}{6pt plus 2pt minus 2pt}
\setlength{\abovedisplayshortskip}{3pt plus 2pt minus 1pt}
\setlength{\belowdisplayshortskip}{4pt plus 2pt minus 1pt}
\section{\texorpdfstring{Chromatic obstructions and strict
separation}{Chromatic obstructions and strict separation}}
\label{sec:chromatic-sketching-obstructions}

The preceding sections analyze the Vietoris--Rips lower bound
\(\tfrac12c_{m,n}\) for \(\dgh(\Sph^m,\Sph^n)\).  This section shows that
the bound is not sharp in general by comparing two independent estimates.
A projective code produces an odd map into a Vietoris--Rips space at a
controlled scale, and hence an upper bound for \(c_{m,n}\).
A chromatic-number mismatch between Borsuk graphs of the
two spheres, on the other hand, forces every correspondence to have large
distortion.  If \(m_j\to\infty\) and
\(n_j/m_j\to\lambda\geq9\), the two resulting bounds remain separated by
a positive amount.

To formulate the second estimate, we associate to a compact metric space
its filtration of Borsuk graphs.  Low-distortion correspondences
induce homomorphisms between these filtrations, and chromatic number
therefore produces a Gromov--Hausdorff-stable numerical profile.

For round spheres, spherical cap estimates make the resulting lower bound
quantitative.  Together with the projective-code estimates in
\Cref{thm:random-projective-code-bound,cor:subexponential-convergence}, this
gives the proportional strict separation of
\Cref{thm:intro-strict-separation}.  The cap estimates are collected in
Appendix~\ref{app:spherical-cap-estimates}; an explicit finite separation is
given in Appendix~\ref{app:explicit-finite-counterexample}.

\subsection{\texorpdfstring{{Borsuk-graph filtrations and
chromatic stability}}{Borsuk-graph filtrations and chromatic stability}}
\label{ssec:chromatic-sketching-foundations}

We begin by placing Borsuk graphs in the graph-homomorphism
preorder and comparing the resulting filtrations under low-distortion
maps.

\begin{definition}[{Closed Borsuk-graph filtration}]
\label{def:borsuk-filtration}
For graphs \(G,H\), write \(G\preceq H\) if there is a graph homomorphism
\(G\to H\).  We regard \(\preceq\) as a preorder on graphs.

Let \(X\) be a nonempty metric space.  For \(s>0\), the \emph{closed
Borsuk graph} \(\Bor_{\geq s}(X)\) is the simple graph
with vertex set \(X\) in which distinct points \(x,x'\) are adjacent
precisely when \(d_X(x,x')\geq s\).  If \(0<t<s\), the identity on \(X\)
is a graph homomorphism
\(\Bor_{\geq s}(X)\longrightarrow\Bor_{\geq t}(X)\),
so \(s\mapsto\Bor_{\geq s}(X)\) is a filtration in the graph-homomorphism
preorder.
\end{definition}

For a general metric space, the strict-threshold analogue is the
\(1\)-skeleton of Engström's anti-Rips complex
\cite[Definition~4.1]{engstrom2009complexes}.  On a round sphere,
after the angular--chordal conversion
\eqref{eq:angular-chordal-conversion}, \(\Bor_{\geq s}(X)\) is the
classical Borsuk graph; see \cite[p.~317]{lovasz1983selfdual}.  Kahle
and Martinez-Figueroa use this spherical formulation and study its
random induced subgraphs in \cite[Section~1]{kahle2020random-borsuk}.

\begin{definition}[{Borsuk-graph interleaving distance}]
\label{def:borsuk-interleaving}
Let \(X,Y\) be nonempty metric spaces.  Their {Borsuk-graph
filtrations} are
\(\delta\)-interleaved if, for every \(u>\delta\),
\[
 \Bor_{\geq u}(X)\preceq\Bor_{\geq u-\delta}(Y),
 \qquad
 \Bor_{\geq u}(Y)\preceq\Bor_{\geq u-\delta}(X).
\]
Define
\[
 \dIBor(X,Y)
 :=
 \inf\bigl\{\delta\geq0:
 X\text{ and }Y\text{ have }\delta\text{-interleaved
 Borsuk-graph filtrations}\bigr\}.
\]
Because the target is a preorder, the compatibility diagrams for an
interleaving commute automatically.  Interleavings compose, so
\(\dIBor\) is an extended pseudometric.
\end{definition}

The restriction \(u>\delta\) ensures that
\(u-\delta>0\).
This definition is modeled on the interleaving distance used in
persistence theory; see
\cite[Section~3]{bubenik2015metrics}.

A proper \(k\)-coloring of a graph \(G\) is a map
\(\kappa:V(G)\to\{1,\ldots,k\}\) whose values at the endpoints of every
edge are distinct.  Equivalently, every color class is an
\emph{independent} set: it contains no pair of adjacent vertices.  The
\emph{chromatic number} \(\chi(G)\) is the
least such \(k\), with \(\chi(G)=\infty\) if no finite coloring exists;
equivalently, it is the least \(k\) for which there is a graph
homomorphism \(G\to K_k\), where \(K_k\) is the complete graph on
\(k\) vertices.  Consequently,
\(G\preceq H \Longrightarrow \chi(G)\leq\chi(H)\),
because every coloring of \(H\) pulls back along a graph homomorphism
\(G\to H\).  We refer to \cite[Chapter~5]{diestel2025graph} for the
standard graph-coloring terminology.

We now apply this order-preserving invariant to the
{Borsuk-graph filtration}.

\begin{definition}[Chromatic profile and interleaving distance]
\label{def:chromatic-profile}
For a nonempty metric space \(X\), define
\(\chprof{X}(s):=\chi\bigl(\Bor_{\geq s}(X)\bigr)\), \(s>0\).
This is a nonincreasing function of \(s\).

Let \(\varphi,\psi:(0,\infty)\to\mathbb N\) be finite-valued
nonincreasing functions.  {They are
\(\delta\)-interleaved if, for every \(u>\delta\),
\(\varphi(u)\leq\psi(u-\delta)\) and
\(\psi(u)\leq\varphi(u-\delta)\).}
Their interleaving distance is
\[
 d_{\mathrm I}(\varphi,\psi)
 :=
 \inf\bigl\{\delta\geq0:
 \varphi\text{ and }\psi\text{ are }\delta\text{-interleaved}\bigr\}.
\]

For nonempty totally bounded metric spaces \(X,Y\), put
\(\dIchi(X,Y):=d_{\mathrm I}\bigl(\chprof{X},\chprof{Y}\bigr)\).
The finiteness needed here is proved in
\Cref{prop:profile-finite} below.
\end{definition}

\begin{proposition}[Borsuk-graph and chromatic stability]
\label{prop:chromatic-stability}
For nonempty compact metric spaces \(X,Y\),
\begin{equation}
\label{eq:chromatic-stability}
 \dIchi(X,Y)
 \leq\dIBor(X,Y)
 \leq2\dgh(X,Y).
\end{equation}
\end{proposition}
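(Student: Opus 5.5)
The plan is to prove the two inequalities in \eqref{eq:chromatic-stability} separately, the first by applying chromatic number to an interleaving and the second by building interleaving homomorphisms from a correspondence.

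\emph{First inequality.} Suppose the Borsuk-graph filtrations of \(X\) and \(Y\) are \(\delta\)-interleaved. For every \(u>\delta\) we then have homomorphisms
\[
\Bor_{\geq u}(X)\to\Bor_{\geq u-\delta}(Y),
\qquad
\Bor_{\geq u}(Y)\to\Bor_{\geq u-\delta}(X).
\]
Since \(G\preceq H\) implies \(\chi(G)\leq\chi(H)\), this gives
\[
\chprof{X}(u)\leq\chprof{Y}(u-\delta),
\qquad
\chprof{Y}(u)\leq\chprof{X}(u-\delta).
\]
The profiles are finite-valued for compact (hence totally bounded) spaces, so they are \(\delta\)-interleaved; I will use \Cref{prop:profile-finite} for that finiteness, or argue it directly: cover \(X\) by finitely many sets of diameter \(<s\) and assign each point to one of them. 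Taking the infimum over \(\delta\) gives \(\dIchi\leq\dIBor\).

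\emph{Second inequality.} Fix \(\eta>2\dgh(X,Y)\). By the correspondence formula there is a correspondence \(\mathcal R\) with \(\dis(\mathcal R)<\eta\). Choose a function \(f\colon X\to Y\) with \((x,f(x))\in\mathcal R\) for all \(x\), and define \(g\colon Y\to X\) symmetrically; no continuity is needed. Let \(u>\eta\), and let \(x\neq x'\) be adjacent in \(\Bor_{\geq u}(X)\), so \(d_X(x,x')\geq u\). Then
\[
d_Y(f(x),f(x'))\geq d_X(x,x')-\dis(\mathcal R)>u-\eta>0.
\]
In particular \(f(x)\neq f(x')\), so \(f(x)\) and \(f(x')\) are adjacent in \(\Bor_{\geq u-\eta}(Y)\). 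Hence \(f\) is a graph homomorphism \(\Bor_{\geq u}(X)\to\Bor_{\geq u-\eta}(Y)\), and the same argument applies to \(g\). The filtrations are therefore \(\eta\)-interleaved. Letting \(\eta\downarrow2\dgh(X,Y)\) gives \(\dIBor\leq2\dgh\).

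The only delicate point is distinctness of images, since the graphs are simple and \(f\) could a priori identify adjacent vertices. The strict positivity \(u-\eta>0\), which is exactly what the restriction \(u>\delta\) in the definition supplies, rules this out. The remaining steps are routine bookkeeping.
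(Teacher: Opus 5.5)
Your proposal is correct and follows essentially the paper's proof: pick set maps \(f,g\) from a correspondence \(\mathcal R\), use the distortion shift (\Cref{lem:distortion-shift}, where positivity of \(u-\dis(\mathcal R)\) gives exactly your distinctness of images) to get an interleaving of Borsuk-graph filtrations and hence \(\dIBor\leq\dis(\mathcal R)\), then apply monotonicity of chromatic number under homomorphisms for \(\dIchi\leq\dIBor\). The only cosmetic difference is that you take \(\dis(\mathcal R)<\eta\) and let \(\eta\downarrow2\dgh(X,Y)\), whereas the paper uses \(\delta=\dis(\mathcal R)\) directly and then takes the infimum over correspondences.
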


We next verify that the chromatic profile is finite on totally bounded
spaces and prove the distortion estimate used in the stability argument.

\begin{proposition}[Finiteness of the chromatic profile]
\label{prop:profile-finite}
If \(X\) is totally bounded, then \(\chprof{X}(s)<\infty\) for every
\(s>0\).  More precisely, any finite \(\tfrac s3\)-net in \(X\) gives a
proper coloring with at most the cardinality of the net many colors.
\end{proposition}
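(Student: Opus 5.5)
The proof is a direct coloring construction. Fix \(s>0\). Since \(X\) is totally bounded, there is a finite \(\tfrac s3\)-net \(\{a_1,\ldots,a_k\}\subseteq X\); that is, every \(x\in X\) satisfies \(d_X(x,a_i)\le\tfrac s3\) for some \(i\). We may equally use any radius strictly below \(\tfrac s2\); \(\tfrac s3\) simply gives slack. Define \(\kappa\colon X\to\{1,\ldots,k\}\) by letting \(\kappa(x)\) be the least index \(i\) with \(d_X(x,a_i)\le\tfrac s3\). This is a well-defined function; no continuity or measurability is needed, because chromatic number concerns arbitrary vertex maps.

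Next we check that \(\kappa\) is a proper coloring of \(\Bor_{\geq s}(X)\). Suppose \(x\neq x'\) have the same color \(i\). The triangle inequality gives
\[
d_X(x,x')\le d_X(x,a_i)+d_X(a_i,x')\le\tfrac{2s}{3}<s .
\]
So \(x\) and \(x'\) are not adjacent, and every color class is independent. Hence \(\chprof{X}(s)=\chi(\Bor_{\geq s}(X))\le k<\infty\), and the number of colors is bounded by the cardinality of the net, as claimed.

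The only point requiring care is the strictness \(\tfrac{2s}{3}<s\), which uses \(s>0\). This matches the closed convention in which adjacency requires distance \(\ge s\): a net of radius exactly \(\tfrac s2\) would allow \(d_X(x,x')=s\) inside a class and would fail. I do not expect any genuine obstacle. Once this is done, the interleaving distance \(\dIchi\) in \Cref{def:chromatic-profile} is applied to finite-valued nonincreasing functions, as required.
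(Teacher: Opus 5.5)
Your proposal is correct and is essentially the paper's proof: assign each point to a net point within \(\tfrac s3\), and observe that any two points in the same class are at distance at most \(\tfrac{2s}{3}<s\), so every class is independent in \(\Bor_{\geq s}(X)\). Your remarks on the least-index choice and on why a closed \(\tfrac s2\)-net could fail are accurate additions, but they do not change the argument.
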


\begin{proof}
Assign every point of \(X\) to a net point at distance at most \(\tfrac s3\).
Two points assigned to the same net point are at distance at most
\(\tfrac{2s}{3}<s\), so they are not adjacent in \(\Bor_{\geq s}(X)\).
The assignment is therefore a proper finite coloring.
\end{proof}

{We use the correspondence and distortion conventions
fixed in the introduction.}

{The following is immediate.}

\begin{lemma}[Distortion shift for Borsuk graphs]
\label{lem:distortion-shift}
If \(f:X\to Y\) satisfies \(\dis(f)\leq r<s\), then \(f\) is a graph
homomorphism
\(\Bor_{\geq s}(X)\longrightarrow\Bor_{\geq s-r}(Y)\).
\end{lemma}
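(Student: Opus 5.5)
The plan is to check adjacency preservation directly from the definition of distortion. Fix \(s>r\) and let \(x,x'\) be adjacent in \(\Bor_{\geq s}(X)\). Then \(x\neq x'\) and \(d_X(x,x')\geq s\). We must show two things: \(f(x)\neq f(x')\), and \(d_Y(f(x),f(x'))\geq s-r\). Together these say that \(f(x)\) and \(f(x')\) are adjacent in the simple graph \(\Bor_{\geq s-r}(Y)\).

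First apply the definition of \(\dis(f)\) to the pairs \((x,f(x))\) and \((x',f(x'))\) in the graph of \(f\). This gives
\[
 \bigl|d_X(x,x')-d_Y(f(x),f(x'))\bigr|\leq\dis(f)\leq r,
\]
and hence
\[
 d_Y\bigl(f(x),f(x')\bigr)\geq d_X(x,x')-r\geq s-r.
\]

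Next, since \(r<s\), the quantity \(s-r\) is strictly positive. So \(d_Y(f(x),f(x'))>0\), which forces \(f(x)\neq f(x')\). This is the only point needing care: the Borsuk graph is simple, so we must rule out adjacent vertices being sent to the same vertex, which would not be an edge. The strict inequality \(r<s\) is exactly what excludes this, and it also ensures that \(\Bor_{\geq s-r}(Y)\) is defined, since its threshold must be positive.

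Combining the two steps, every edge of \(\Bor_{\geq s}(X)\) is carried to an edge of \(\Bor_{\geq s-r}(Y)\). Therefore \(f\) is a graph homomorphism \(\Bor_{\geq s}(X)\to\Bor_{\geq s-r}(Y)\), as claimed.
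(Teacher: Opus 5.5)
Your proof is correct and is exactly the direct check the paper has in mind; the paper calls the lemma immediate and gives no written proof. In particular, you rightly use \(r<s\) to get \(d_Y(f(x),f(x'))\geq s-r>0\), so adjacent vertices cannot be sent to the same vertex of the simple graph \(\Bor_{\geq s-r}(Y)\).
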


Since chromatic number is order-preserving for graph homomorphisms,
\Cref{lem:distortion-shift} also gives
\[
 \chprof{X}(s)\leq\chprof{Y}(s-r)
 \qquad\bigl(\dis(f)\leq r<s\bigr).
\]

\begin{proof}[Proof of \Cref{prop:chromatic-stability}]
Let \(\mathcal R\subseteq X\times Y\) be a correspondence and put
\(\delta=\dis(\mathcal R)\).  Choose set maps \(f:X\to Y\) and
\(g:Y\to X\) such that
\((x,f(x))\in\mathcal R\) and \((g(y),y)\in\mathcal R\)
for every \(x\in X\) and \(y\in Y\).  Then
\(\dis(f),\dis(g)\leq\delta\).  By
\Cref{lem:distortion-shift}, for every \(u>\delta\),
\[
 \Bor_{\geq u}(X)\preceq\Bor_{\geq u-\delta}(Y),
 \qquad
 \Bor_{\geq u}(Y)\preceq\Bor_{\geq u-\delta}(X).
\]
Thus \(\dIBor(X,Y)\leq\dis(\mathcal R)\).
{Taking the infimum over all correspondences and using
the correspondence formula recalled in the introduction gives
\(\dIBor(X,Y)\leq2\dgh(X,Y)\).}

Chromatic number is order-preserving for graph homomorphisms.
{Applying it at every scale to an interleaving of the
Borsuk-graph filtrations gives an interleaving of the corresponding
chromatic profiles}, and hence
\(\dIchi(X,Y)\leq\dIBor(X,Y)\).
\end{proof}

\begin{remark}[Endpoint conventions]
\label{rem:interleaving-pseudometric}
At scale zero the Borsuk graph on an infinite space is complete and can
have infinite chromatic number.  For \(s>0\), a color class in
\(\Bor_{\geq s}(X)\) has all pairwise distances strictly below \(s\),
although its diameter may equal \(s\).  Accordingly, \(d_{\mathrm I}\)
is an extended pseudometric: two closed-threshold profiles which differ
only at jump endpoints can have distance zero.
The strict inequalities in \Cref{cor:one-scale-mismatch} avoid
this endpoint ambiguity.
\end{remark}

\begin{remark}[Relation with Shatter and Sketch]
\label{rem:chromatic-shatter-sketch}
For a nonempty compact metric space \(X\) and an integer \(k\geq1\), let
\(\Shatter_k(X)\) be the infimum, over partitions of \(X\) into at most
\(k\) nonempty sets, of the largest diameter of a part.  A proper
\(k\)-coloring at scale \(s\) gives such a partition whose points in each
part are pairwise at distance below \(s\); conversely, a partition into
sets of diameter below \(s\) gives a proper coloring.  Taking infima yields
\(\Shatter_k(X)=
\inf\{s>0:\chprof{X}(s)\leq k\}\).
The Shatter--Sketch duality theorem
\cite[Theorem~1.7]{MemoliSidiropoulosSinghal2018} states that, if
\(\Sketch_k(X)\) denotes the infimum of \(\dgh(X,M)\) over nonempty finite
metric spaces \(M\) with \(|M|\leq k\), then
\(\Sketch_k(X)=\tfrac12\Shatter_k(X)\).
\end{remark}

For applications, a discrepancy at one scale already gives a metric lower
bound.

\begin{corollary}[One-scale chromatic lower bound]
\label{cor:one-scale-mismatch}
{Let \(X,Y\) be nonempty totally bounded metric spaces.
If \(0<t_0<s_0\) and
\(\chprof{X}(s_0)>\chprof{Y}(t_0)\), then
\(s_0-t_0\leq\dIchi(X,Y)\leq\dIBor(X,Y)\).  If \(X,Y\) are compact,
then also \(\dgh(X,Y)\geq\tfrac{s_0-t_0}{2}\).  The same conclusions
hold if there is a finite subset \(A\subseteq X\) such that
\(\chprof{A}(s_0)>\chprof{Y}(t_0)\).}
\end{corollary}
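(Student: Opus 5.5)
The plan is to argue directly from the definition of the interleaving distance, by contradiction. Suppose \(\dIchi(X,Y)<s_0-t_0\). Then there is \(\delta<s_0-t_0\) such that \(\chprof{X}\) and \(\chprof{Y}\) are \(\delta\)-interleaved; \Cref{prop:profile-finite} guarantees that both profiles are finite-valued, so the definition applies. Take \(u=s_0>\delta\). The interleaving gives \(\chprof{X}(s_0)\leq\chprof{Y}(s_0-\delta)\). Since \(s_0-\delta>t_0\) and \(\chprof{Y}\) is nonincreasing (\Cref{def:chromatic-profile}), we get \(\chprof{Y}(s_0-\delta)\leq\chprof{Y}(t_0)\). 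Hence \(\chprof{X}(s_0)\leq\chprof{Y}(t_0)\), which contradicts the hypothesis. Therefore \(\dIchi(X,Y)\geq s_0-t_0\).

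One point needs care: the set of admissible \(\delta\) need not be attained at its infimum. This is harmless, because any \(\delta\) in the admissible set lying strictly below \(s_0-t_0\) gives the contradiction. The strict inequality \(t_0<s_0-\delta\) is exactly what monotonicity needs.

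The remaining inequalities follow from \Cref{prop:chromatic-stability}. First, \(\dIchi\leq\dIBor\): an interleaving of Borsuk-graph filtrations yields one of the chromatic profiles, since chromatic number is order-preserving under graph homomorphisms. This step does not use compactness, only the finiteness from \Cref{prop:profile-finite}. Second, in the compact case \(\dIBor\leq2\dgh\), which gives \(\dgh(X,Y)\geq\tfrac12(s_0-t_0)\).

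For the finite-subset version, I will rerun the argument with \(A\) in place of \(X\). A finite subset of a totally bounded space is itself totally bounded, and a finite set is compact. Note that \(A\) with the restricted metric is not literally \(X\), so the cleanest route is to work with \(\dIBor(A,Y)\) directly. Alternatively, if \(X\) is compact, I can use \(\dgh\) via correspondences. Any correspondence \(\mathcal R\) between \(X\) and \(Y\) restricts to a relation \(\mathcal R\cap(A\times Y)\), and any selection \(f\colon A\to Y\) from it has \(\dis(f)\leq\dis(\mathcal R)\). By \Cref{lem:distortion-shift}, \(f\) is a homomorphism \(\Bor_{\geq s_0}(A)\to\Bor_{\geq s_0-\delta}(Y)\) whenever \(\delta=\dis(\mathcal R)<s_0\). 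This gives \(\chprof{A}(s_0)\leq\chprof{Y}(s_0-\delta)\), so \(\delta\geq s_0-t_0\). Taking the infimum over \(\mathcal R\) yields \(\dgh(X,Y)\geq\tfrac12(s_0-t_0)\).

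For the \(\dIchi\) and \(\dIBor\) claims with \(A\subseteq X\), I will use the monotonicity \(\chprof{A}\leq\chprof{X}\). This holds because \(\Bor_{\geq s}(A)\) is an induced subgraph of \(\Bor_{\geq s}(X)\). It gives \(\chprof{X}(s_0)\geq\chprof{A}(s_0)>\chprof{Y}(t_0)\), which reduces this case to the first one. Since that reduction is immediate, I expect the only real subtlety in the whole proof to be the strict-inequality bookkeeping in the first step.
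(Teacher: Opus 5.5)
Your proposal is correct and follows the paper's proof. You rule out every admissible $\delta<s_0-t_0$ by testing the first interleaving inequality at $u=s_0$ and using that $\chprof{Y}$ is nonincreasing, invoke \Cref{prop:chromatic-stability} for the remaining bounds, and handle the finite subset through $\chprof{A}(s_0)\leq\chprof{X}(s_0)$ from the induced-subgraph inclusion. Your observation that $\dIchi\leq\dIBor$ needs only total boundedness, not compactness, is a precision the paper leaves implicit, and your correspondence-restriction argument for $A$ is valid but not needed.
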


\begin{proof}
{Suppose \(0\leq\delta<s_0-t_0\).  Since
\(s_0-\delta>t_0\) and the target profile is nonincreasing,
\(\chprof{Y}(s_0-\delta)\leq\chprof{Y}(t_0)<\chprof{X}(s_0)\).
The first chromatic interleaving inequality therefore fails at
\(u=s_0\), so no \(\delta<s_0-t_0\) is admissible for \(\dIchi\).
The stated bounds now follow from \Cref{prop:chromatic-stability}.
Finally, \(\Bor_{\geq s_0}(A)\) is an induced subgraph of
\(\Bor_{\geq s_0}(X)\), so
\(\chprof{X}(s_0)\geq\chprof{A}(s_0)\).}
\end{proof}

\long\gdef\paperBfinitecounterexampleappendix{%
\section{An explicit finite strict separation}
\label{app:explicit-finite-counterexample}

The cap estimates above and the random projective-code theorem now give
independent lower and upper bounds for one pair of spheres.  The
constants are chosen for transparent exact verification rather than
numerical optimization.  Put
\[
 \eta:=\frac{\pi}{400},
 \qquad
 s_0:=\frac{149\pi}{200},
 \qquad
 t_0:=\frac{9\pi}{400},
 \qquad
 K:=90^{240}.
\]

\begin{lemma}[The finite source lower bound]
\label{lem:source-chromatic}
There is a finite set \(A\subseteq\Sph^{30000}\) such that
\[
 \chprof{A}(s_0)>K.
\]
\end{lemma}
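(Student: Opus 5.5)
We will obtain \(A\) by combining a volume (measure) lower bound for the chromatic profile with compactness. The basic observation is that a colour class in \(\Bor_{\geq s_0}(\Sph^{30000})\) is a set in which all pairwise geodesic distances are \(<s_0\). Its diameter is therefore at most \(s_0<\pi\), so each class is contained in a set of small normalized measure. Concretely, write \(\sigma\) for the normalized surface measure on \(\Sph^{n}\), \(n=30000\). The first step is an isodiametric-type bound: any set of diameter at most \(s_0\) has \(\sigma\)-measure at most \(\CapVol_n(\theta)\), the normalized volume of a cap of angular radius \(\theta\). To avoid invoking the spherical isodiametric inequality, we will use the cruder but elementary containment: a set of diameter \(\le s_0<\pi\) lies in a cap of radius \(\theta\) determined by the Jung-type bound on the sphere. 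The simplest version is this: if \(x\) is in the set, the whole set lies in the cap \(B(x,s_0)\). That is too weak, since \(s_0>\pi/2\). So we instead use the appendix cap estimates (Appendix~\ref{app:spherical-cap-estimates}). They bound the measure of a set of spherical diameter \(\le s_0\) by \(\CapVol_n(s_0/2)\), possibly up to a factor. We then bound \(\CapVol_n(s_0/2)\le (\sin(s_0/2))^{n}\) times a polynomial factor. Since \(s_0/2=149\pi/400<\pi/2\), this is exponentially small in \(n\).

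Second, we pass from measure to a finite set. If \(\Sph^n\) (or a finite subset) were covered by \(K\) classes of diameter \(<s_0\), then \(\sum_i\sigma(\text{class}_i)\ge 1\). Such a covering would force \(K\,\CapVol_n(s_0/2)\ge1\), so it is impossible as soon as \(K\,\CapVol_n(s_0/2)<1\). This gives \(\chprof{\Sph^n}(s_0)>K\). Measurability of colour classes is not guaranteed, so to obtain a \emph{finite} \(A\) we argue via finite sets directly. Take \(A\) to be a fine finite net, or better a large sample with an equidistribution property: every closed cap of radius \(s_0/2+\eta\) contains at most a fraction \(\CapVol_n(s_0/2+\eta)+o(1)\) of the points. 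Here the slack \(\eta=\pi/400\) absorbs discretization. Any colour class in \(A\) has diameter \(<s_0\). By the containment step it then lies in a cap of radius \(s_0/2\) (or slightly more), and hence contains few points of \(A\). Counting gives \(\chprof{A}(s_0)\ge 1/(\text{max fraction})>K\). Alternatively, compactness (de Bruijn--Erdős) reduces \(\chprof{\Sph^n}(s_0)>K\) to some finite subgraph. This is cleaner: we prove the measure statement for closed colour classes after fattening by \(\eta\), and then invoke de Bruijn--Erdős to extract \(A\).

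The final step is the numerics. We need \(K\cdot\CapVol_{30000}(s_0/2+\eta)<1\), that is, \(240\log 90+\log(\text{poly factor})<30000\cdot\log\csc(\tfrac{149\pi}{400}+\tfrac{\pi}{400})\), where the right-hand angle is \(3\pi/8\). Now \(\log\csc(3\pi/8)=-\log\cos(\pi/8)\approx0.0791\), so the right side is about \(2373\), while \(240\log 90\approx1080\). The margin is comfortable even with polynomial factors.

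The main obstacle is the containment step: showing that a set of spherical diameter \(<s_0\), with \(s_0>\pi/2\), has measure at most roughly that of a cap of radius \(s_0/2\). I would first try to cite the spherical isodiametric inequality (Lévy/Schmidt), which gives exactly this. If that is not available in the appendix, I would instead use the circumradius bound \(\arcsin\bigl(\sqrt{2n/(n+1)}\,\sin(s_0/2)\bigr)\). That bound is weaker, and I would recheck the numerics for it, possibly dropping to \((\sin)^{n}\)-type estimates with a worse constant.
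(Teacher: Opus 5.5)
Your de Bruijn--Erd\H{o}s route is correct, but it obtains finiteness differently from the paper.

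You argue on the whole sphere. Suppose \(\Bor_{\geq s_0}(\Sph^{30000})\) had a proper \(K\)-colouring. The closures (or closed \(\eta\)-fattenings) of the colour classes are Borel sets of diameter at most \(s_0\) (resp.\ \(\tfrac{3\pi}{4}\)) that cover the sphere. Isodiametry and \(\CapVol_n(\rho)\leq\tfrac12(\sin\rho)^n\) then force \(K\,\CapVol_{30000}(3\pi/8)\geq1\), which is false. Compactness then supplies a finite subgraph that is not \(K\)-colourable, and its vertex set is your \(A\).

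The paper instead fixes \(A\) explicitly as a maximal \(\eta\)-separated set and colours \(A\) itself. Each class is finite, so its diameter is strictly below \(s_0\). The unions of closed \(\eta\)-balls about the classes are measurable, cover the sphere because \(A\) is an \(\eta\)-net, and have diameter below \(s_0+2\eta=\tfrac{3\pi}{4}\). This gives \(\chprof{A}(s_0)\geq 1/\CapVol_{30000}(3\pi/8)\) via \Cref{prop:finite-discretization}.

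The paper's version is constructive and avoids the compactness theorem, hence choice, and it explains why the slack \(\eta\) appears. Yours gives the whole-sphere bound directly (closures alone suffice, so no slack is needed), at the cost of a non-explicit \(A\). Your decimal estimates (\(\approx2375\) versus \(\approx1080\) in the exponent) are correct but should be certified exactly. The paper does this with \(\sin^2(3\pi/8)=\tfrac{2+\sqrt2}{4}<\tfrac78\), \(8^{10}>3\cdot7^{10}\), and \(90<3^5\).

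Discard your other two suggestions, because both fail. First, a colour class of diameter below \(s_0\) need not lie in any cap of radius near \(s_0/2\). The \(n+2\) vertices of a regular simplex inscribed in \(\Sph^n\) have diameter \(\arccos(-\tfrac{1}{n+1})<s_0\), yet they sum to zero and so lie in no open hemisphere. Second, the Jung-type circumradius bound is vacuous here, since \(\sqrt{2n/(n+1)}\,\sin(149\pi/400)\approx1.30>1\). The isodiametric inequality controls measure, not containment. So only a covering-by-measure argument works, whether yours with de Bruijn--Erd\H{o}s or the paper's net fattening.
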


\begin{proof}
Let \(A\) be a finite maximal \(\eta\)-separated subset of
\(\Sph^{30000}\) in the round geodesic metric.  Since
\(s_0+2\eta=\tfrac{3\pi}{4}\),
\Cref{prop:finite-discretization,lem:cap-volume} give
\[
 \chprof{A}(s_0)
 \geq\frac{1}{\CapVol_{30000}(3\pi/8)}
 >2\left(\frac87\right)^{15000}.
\]
Indeed,
\(\sin^2(\tfrac{3\pi}{8})
=\tfrac{2+\sqrt2}{4}<\tfrac78\),
where the strict inequality is equivalent to \(2\sqrt2<3\).
Furthermore,
\(8^{10}>3\cdot7^{10}\) and \(90<3^5\),
and hence
\[
 \left(\frac87\right)^{15000}
 >3^{1500}>3^{1200}>90^{240}=K.
\]
This proves the lemma.
\end{proof}

\begin{lemma}[A small target coloring]
\label{lem:target-chromatic}
One has
\[
 \chprof{\Sph^{239}}(t_0)<K.
\]
\end{lemma}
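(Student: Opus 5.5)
We need \(\chprof{\Sph^{239}}(t_0)<K=90^{240}\) with \(t_0=\tfrac{9\pi}{400}\). The plan is to bound the chromatic number by an explicit net, using \Cref{prop:profile-finite}: any finite \(\tfrac{t_0}{3}\)-net of \(\Sph^{239}\) gives a proper coloring of \(\Bor_{\geq t_0}(\Sph^{239})\), with one color per net point. So it suffices to produce a geodesic \(\rho\)-net with \(\rho=\tfrac{t_0}{3}=\tfrac{3\pi}{400}\) and cardinality below \(90^{240}\). For the count I will use the volumetric net estimate recorded in the proof of \Cref{thm:exponential-capacity-bounds}. There, \(\Sph^{k-1}\) admits a geodesic \(\rho\)-net of cardinality at most \((1+\csc(\rho/2))^k\). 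Here \(k=240\), so the whole task reduces to verifying
\[
 1+\csc\!\left(\frac{3\pi}{800}\right)<90 .
\]

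For this inequality, I use \(\sin x\geq x-\tfrac{x^3}{6}\), or more simply the concavity bound \(\sin x\geq\tfrac{2}{\pi}x\) on \([0,\tfrac\pi2]\). The concavity bound gives \(\sin(\tfrac{3\pi}{800})\geq\tfrac{6}{800}=\tfrac{3}{400}\), hence \(\csc(\tfrac{3\pi}{800})\leq\tfrac{400}{3}\approx133\). That is too weak, so the cubic Taylor bound is needed instead. With \(x=\tfrac{3\pi}{800}\approx0.011781\), we get \(\sin x\geq x(1-x^2/6)>0.01178\), so \(\csc x<84.9\) and \(1+\csc x<85.9<90\). To keep the check exact, I will use \(\pi>3.14\), which gives \(x>0.011775\), and \(x^2/6<2.4\cdot10^{-5}\). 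Then \(\sin x>0.01177\), and \(1/0.01177<85\). This elementary numeric verification is the main place where care is needed. Everything else is direct invocation of earlier results.

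Finally, the coloring bound uses closed nets: each point lies within distance at most \(\rho\) of a net point. Two points with the same color are therefore at distance at most \(2\rho=\tfrac{2t_0}{3}<t_0\), so they are not adjacent, as in \Cref{prop:profile-finite}. Combining,
\[
 \chprof{\Sph^{239}}(t_0)\leq\bigl(1+\csc(\tfrac{3\pi}{800})\bigr)^{240}<90^{240}=K .
\]
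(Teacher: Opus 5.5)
Your argument is correct and is essentially the paper's: you bound \(\chprof{\Sph^{239}}(t_0)\) by the size of a net using the volumetric count \(\bigl(1+\csc(\rho/2)\bigr)^{240}\), and your numerics (\(\sin x\geq x-x^3/6\), giving \(1+\csc(3\pi/800)<86\)) check out. The only difference is that the paper takes the strict covering from \Cref{lem:chordal-net} at angular radius \(t_0/2\) instead of a closed \(t_0/3\)-net; this leaves enough slack that the crude bound \(\sin x\geq 2x/\pi\) already gives \(1+\csc(9\pi/1600)\leq 809/9<90\), whereas your smaller radius is what forces the sharper cubic estimate.
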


\begin{proof}
Apply \Cref{lem:chordal-net} with \(m=239\) and
\(\delta:=2\sin(t_0/4)=2\sin(9\pi/1600)\).
Assign every point of \(\Sph^{239}\) to a net point at chordal distance
strictly below \(\delta\).  By
\eqref{eq:angular-chordal-conversion}, every point lies at angular
distance strictly below \(\tfrac{t_0}{2}\) from its assigned center.
Any two points of a class therefore lie at angular distance strictly
below \(t_0\), so the class is independent in
\(\Bor_{\geq t_0}(\Sph^{239})\).

The number of classes is at most
\begin{align*}
 |Q|
 &\leq
 \left(1+\csc\frac{t_0}{4}\right)^{240}
 =\left(1+\csc\frac{9\pi}{1600}\right)^{240}\\
 &\leq\left(\frac{809}{9}\right)^{240}
 <90^{240}=K.
\end{align*}
Here \(\sin x\geq\tfrac{2x}{\pi}\) for
\(0\leq x\leq\tfrac\pi2\) gives
\(\sin(\tfrac{9\pi}{1600})\geq\tfrac9{800}\), and
\(\tfrac{809}{9}<90\).  This proves the lemma.
\end{proof}

\Needspace{18\baselineskip}
\begin{theorem}[Explicit finite separation]
\label{thm:chromatic-counterexample}
The spheres \(\Sph^{239}\) and \(\Sph^{30000}\) satisfy
\begin{equation}
\label{eq:chromatic-counterexample}
\begin{aligned}
 \dgh(\Sph^{239},\Sph^{30000})
 &\geq\tfrac12\dIchi(\Sph^{239},\Sph^{30000})
 \geq\frac{289\pi}{800}
 >\frac{\pi}{3}
 >\tfrac12c_{239,30000}.
\end{aligned}
\end{equation}
\end{theorem}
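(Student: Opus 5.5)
The chain in \eqref{eq:chromatic-counterexample} has four links, and I will establish them from left to right.

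\emph{The first two links.} Set \(X=\Sph^{30000}\) and \(Y=\Sph^{239}\). By \Cref{lem:source-chromatic} there is a finite set \(A\subseteq X\) with \(\chprof{A}(s_0)>K\). By \Cref{lem:target-chromatic}, \(\chprof{Y}(t_0)<K\). Hence \(\chprof{A}(s_0)>\chprof{Y}(t_0)\) with \(0<t_0<s_0\).

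\Cref{cor:one-scale-mismatch}, in its finite-subset form, then gives
\[
 \dIchi(\Sph^{239},\Sph^{30000})\geq s_0-t_0 .
\]
The order of the two spheres is irrelevant, because \(\dIchi\) is symmetric. \Cref{prop:chromatic-stability} gives \(\dgh\geq\tfrac12\dIchi\). The arithmetic is
\[
 s_0-t_0=\frac{298\pi}{400}-\frac{9\pi}{400}=\frac{289\pi}{400},
\]
so half of it is \(\tfrac{289\pi}{800}\). The third link \(\tfrac{289\pi}{800}>\tfrac\pi3\) reduces to \(867>800\).

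\emph{The last link.} It remains to show \(c_{239,30000}<\tfrac{2\pi}{3}\). I will use \Cref{thm:random-projective-code-bound} with \(m=239\) and \(n=30000\), so \(m+1=240\). This gives
\[
 \tau^2=\frac{4\log(30001)+2}{240}.
\]
The estimate \(\log 30001<\log(e^{11})\) holds because \(e^{11}\approx 59874\). Therefore \(\tau^2<\tfrac{46}{240}<\tfrac14\), so \(\tau<\tfrac12\).

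Since \(\arcsin\) is increasing, \(\arcsin\tau<\arcsin\tfrac12=\tfrac\pi6\). Hence
\[
 c_{239,30000}\leq\frac\pi2+\arcsin\tau<\frac{2\pi}{3},
\]
that is, \(\tfrac12c_{239,30000}<\tfrac\pi3\).

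\emph{Main obstacle.} The only step needing care is certifying \(\log 30001<11\) rigorously. A clean route is \(e^{11}>2.7^{11}\) together with the check \(2.7^{11}>30001\). This holds because \(2.7^{2}=7.29\), so \(2.7^{4}>53\), \(2.7^{8}>2809\), and \(2.7^{11}>2809\cdot19.68>55000\). Everything else is a direct citation of the preceding lemmas.
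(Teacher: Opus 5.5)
Your proof is correct and follows the paper's argument. For the lower links you combine \Cref{lem:source-chromatic,lem:target-chromatic} with the one-scale mismatch corollary and chromatic stability; for the last link you apply the random projective-code bound with $\tau_{239,30000}<\tfrac12$. The only difference is cosmetic: you certify $\log 30001<11$ via $e>2.7$, whereas the paper uses the cruder $\log 30001<14$ from $e^2>7$, and both give $\tau_{239,30000}^2<\tfrac14$.
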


\begin{proof}
We first obtain the Vietoris--Rips upper bound using only
\Cref{thm:random-projective-code-bound}.  For the indicated indices,
\[
 \tau_{239,30000}^2
 =\frac{4\log(30001)+2}{240}
 <\frac{58}{240}<\frac14.
\]
Indeed, the first five terms of the power series give
\(e^2>1+2+2+\tfrac43+\tfrac23=7\); hence
\(e^{14}>7^7>30001\), and hence \(\log(30001)<14\).  Therefore
\(\tau_{239,30000}<\tfrac12\), and the random bound gives the strict
estimate
\[
 c_{239,30000}
 \leq\frac{\pi}{2}+\arcsin(\tau_{239,30000})
 <\frac{\pi}{2}+\arcsin\frac12
 =\frac{2\pi}{3}.
\]

For the metric lower bound, the two preceding lemmas imply
\[
 \chprof{\Sph^{30000}}(s_0)
 \geq\chprof{A}(s_0)
 >K
 >\chprof{\Sph^{239}}(t_0).
\]
Since
\(s_0-t_0=\tfrac{289\pi}{400}\),
\Cref{cor:one-scale-mismatch} yields
\[
 \dgh(\Sph^{239},\Sph^{30000})
 \geq\tfrac12\dIchi(\Sph^{239},\Sph^{30000})
 \geq\frac{289\pi}{800}.
\]
Finally, \(3\cdot289=867>800\), while
the projective-code upper bound above gives
\(\tfrac12c_{239,30000}<\tfrac\pi3\).  Combining these inequalities proves
\eqref{eq:chromatic-counterexample}.
\end{proof}

\begin{remark}[The exact role of the finite witness]
\label{rem:finite-proof}
The finite set \(A\subseteq\Sph^{30000}\) satisfies
\(\chprof{A}(s_0)>K\), whereas
\(\chprof{\Sph^{239}}(t_0)<K\).  Suppose that a correspondence between
\(\Sph^{30000}\) and \(\Sph^{239}\) had distortion
\(\delta<s_0-t_0\), and choose a map \(f:\Sph^{30000}\to\Sph^{239}\)
from that correspondence.  Since \(s_0-\delta>t_0\), restriction of \(f\)
and a proper \(K\)-coloring at scale \(t_0\) would give graph homomorphisms
\[
 \Bor_{\geq s_0}(A)
 \lhook\joinrel\longrightarrow
 \Bor_{\geq s_0}(\Sph^{30000})
 \longrightarrow
 \Bor_{\geq s_0-\delta}(\Sph^{239})
 \longrightarrow K_K,
\]
where \(K_K\) is the complete graph on \(K\) vertices.  Their composite
would be a proper \(K\)-coloring of \(\Bor_{\geq s_0}(A)\), a
contradiction.  Thus \(A\) is a finite subgraph witness for the failure of
the shifted graph homomorphism.  Its finiteness also supplies the strict
diameter and measurability properties used in
\Cref{prop:finite-discretization}.
\end{remark}

}%

\subsection{\texorpdfstring{Chromatic entropy and proportional
separation}{Chromatic entropy and proportional separation}}
\label{ssec:chromatic-entropy-proportional}

We now specialize the chromatic obstruction to round spheres and pass
to proportional dimension growth.

\begin{definition}[Normalized spherical cap volume]
\label{def:spherical-cap-volume}
For \(d\geq1\), let \(\sigma_d\) denote normalized spherical volume on
\(\Sph^d\), and let
\(\CapVol_d(\rho)\) denote the normalized volume of a geodesic cap of
radius \(\rho\).  Open and closed caps have the same volume.  The
function \(\CapVol_d\) is a continuous strictly increasing bijection
from \([0,\pi]\) onto \([0,1]\).  We denote its inverse by
\(\CapVol_d^{-1}:[0,1]\longrightarrow[0,\pi]\).
\end{definition}

The spherical-cap estimates and the random-cap coloring used below are
proved in \Cref{app:spherical-cap-estimates}; see in particular
\Cref{lem:cap-volume,lem:random-cap-coloring}.

\begin{proposition}[Chromatic entropy of round spheres]
\label{prop:chromatic-entropy}
For \(0<u<\pi\), put \(h(u):=-\log\sin(u/2)\).  Then
\begin{equation}
\label{eq:chromatic-entropy}
 \lim_{d\to\infty}\frac1d\log\chprof{\Sph^d}(u)
 =h(u).
\end{equation}
\end{proposition}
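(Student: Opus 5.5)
We prove matching lower and upper bounds on \(\tfrac1d\log\chprof{\Sph^d}(u)\). Throughout we use the fact (\Cref{lem:cap-volume}) that for fixed \(0<\rho<\tfrac\pi2\),
\[
 \tfrac1d\log\CapVol_d(\rho)\to\log\sin\rho .
\]
This follows from \(\CapVol_d(\rho)\propto\int_0^\rho\sin^{d-1}t\,dt\) together with Laplace's method. The normalizing constant \(\int_0^\pi\sin^{d-1}\) is only polynomial in \(d\). For \(\rho\geq\tfrac\pi2\) the rate is \(0\).

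\emph{Lower bound.} Take a proper coloring of \(\Bor_{\geq u}(\Sph^d)\). Each color class \(C\) has all pairwise distances \(<u\). The plan is to bound \(\sigma_d(C)\), or rather the measure of a slight fattening of \(C\), by \(\CapVol_d(u/2+o(1))\). This is the isodiametric inequality on the sphere, in the form: a set of diameter \(\leq u<\pi\) has measure at most that of a cap of radius \(u/2\). Classes need not be measurable, which is the role of the finite discretization device already invoked in Appendix~\ref{app:explicit-finite-counterexample}.

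I would work with a maximal \(\eta\)-separated finite set \(A\) and apply \Cref{prop:finite-discretization}. The \(\eta\)-Voronoi-type neighbourhoods of a color class of \(\Bor_{\geq u}(A)\) form a set of diameter \(<u+2\eta\). These neighbourhoods cover the sphere, so
\[
 \chprof{\Sph^d}(u)\geq\chprof{A}(u)\geq 1/\CapVol_d\bigl((u+2\eta)/2\bigr).
\]
Taking \(\tfrac1d\log\), letting \(d\to\infty\), and then \(\eta\downarrow0\) gives \(\liminf\geq-\log\sin(u/2)=h(u)\). Continuity of \(\log\sin\) is used in the last step.

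\emph{Upper bound.} A covering of \(\Sph^d\) by caps of radius \(\rho<u/2\) gives a proper coloring: assign each point to one cap containing it. Two points in the same cap are then at distance \(\leq2\rho<u\). So we need covering numbers of \(\Sph^d\) by caps of radius \(\rho\) that are at most \(e^{o(d)}/\CapVol_d(\rho)\). The net bound \((1+\csc(\rho/2))^{d+1}\) is too crude, since it gives rate \(-\log\sin(\rho/2)\) instead of \(-\log\sin\rho\).

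Instead I would use the random-cap coloring of \(\Cref{lem:random-cap-coloring}\), a Rogers-type argument. Choose \(M\) independent uniform centers. A fixed point is uncovered with probability \((1-\CapVol_d(\rho))^M\leq e^{-M\CapVol_d(\rho)}\). Take \(M=\lceil d^2/\CapVol_d(\rho)\rceil\). Then the expected uncovered measure is tiny, and an \(\epsilon\)-net argument upgrades this to full coverage by caps of radius \(\rho+\epsilon\). Concretely, with \(M\) a polynomial factor times \(1/\CapVol_d(\rho)\), every point of a fine net of size \(e^{O(d)}\) is covered with high probability. Then enlarge the radius slightly, keeping \(2(\rho+\epsilon)<u\). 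The polynomial factor must also absorb the net size, which requires the number of random caps to be a factor \(\mathrm{poly}\cdot d\) larger; this does not affect the exponential rate. We conclude \(\limsup\tfrac1d\log\chprof{\Sph^d}(u)\leq-\log\sin\rho\), and then let \(\rho\uparrow u/2\).

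\emph{Main obstacle.} I expect the main obstacle to be the lower bound. It needs both a sharp isodiametric estimate on the sphere, which I would cite in Lévy/Schmidt form, and the measurability issue for arbitrary color classes. Both are handled by passing to the finite set \(A\) and fattening by \(\eta\).
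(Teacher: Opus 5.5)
Your proposal is correct and follows essentially the paper's proof. The lower bound comes from the finite \(\eta\)-discretization plus spherical isodiametry, which the paper packages as the lower half of \eqref{eq:cap-profile-bounds} followed by \(\CapVol_d(u/2)\leq\frac12\sin^d(u/2)\); the upper bound comes from the random-cap coloring of \Cref{lem:random-cap-coloring}, whose union bound over an \(e^{O(d)}\)-size net costs only a polynomial factor.

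One small correction: \Cref{lem:cap-volume} supplies only the upper bound \(\CapVol_d(\rho)\leq\frac12\sin^d\rho\). The matching lower bound on cap volume, which your upper-bound half needs, comes from your Laplace argument, or from the elementary estimate \(\CapVol_d(r)\geq\frac{a}{\pi}\sin^{d-1}(r-a)\) used at the end of the proof of \Cref{lem:random-cap-coloring}.
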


\begin{proof}
{For \(d\geq2\), the lower bound in
\eqref{eq:cap-profile-bounds} and \Cref{lem:cap-volume} give
\(\chprof{\Sph^d}(u)\geq\CapVol_d(u/2)^{-1}
\geq2(\csc(u/2))^d\).  Consequently,
\(\tfrac1d\log\chprof{\Sph^d}(u)
\geq h(u)+\tfrac{\log2}{d}\), and hence
\(\liminf_{d\to\infty}\tfrac1d\log\chprof{\Sph^d}(u)\geq h(u)\).}

{For the reverse inequality, fix
\(0<\varepsilon<\tfrac{u}{6}\), and set
\(L_\varepsilon:=\log(1+\csc(\varepsilon/2))\),
\(A_d:=\tfrac{\pi}{\varepsilon}[1+(d+1)L_\varepsilon]\), and
\(b_\varepsilon:=\sin(u/2-3\varepsilon)\).  Then
\(0<b_\varepsilon<1\) and
\(A_d\leq\tfrac{\pi}{\varepsilon}(1+L_\varepsilon)(d+1)\).
By \eqref{eq:random-cap-coloring-explicit},
\(\chprof{\Sph^d}(u)\leq1+A_db_\varepsilon^{-(d-1)}\); since
\(b_\varepsilon<1\), the right-hand side is at most
\((1+A_d)b_\varepsilon^{-(d-1)}\).  The bound on \(A_d\) gives
\(\log(1+A_d)=O_\varepsilon(\log d)\).  Taking logarithms, dividing by
\(d\), and letting \(d\to\infty\) therefore yields
\(\limsup_{d\to\infty}\tfrac1d\log\chprof{\Sph^d}(u)
\leq-\log b_\varepsilon=-\log\sin(u/2-3\varepsilon)\).  Finally,
letting \(\varepsilon\downarrow0\) gives the upper bound \(h(u)\);
together with the lower bound, this proves
\eqref{eq:chromatic-entropy}.}
\end{proof}

\Needspace{14\baselineskip}
For \(\lambda>1\), set
\begin{equation}
\label{eq:lambda-curve}
 \Phichi(\lambda):=
 \max_{0<x<\pi/2}
 \left[x-\arcsin\bigl((\sin x)^\lambda\bigr)\right].
\end{equation}

\begin{theorem}[Proportional chromatic lower bound]
\label{thm:ratio-chromatic-lower}
Let \(1\leq m_j<n_j\) be integer sequences such that
\[
 m_j\longrightarrow\infty,
 \qquad
 \frac{n_j}{m_j}\longrightarrow\lambda>1.
\]
Then
\begin{equation}
\label{eq:ratio-chromatic-lower}
 \liminf_{j\to\infty}
 \frac12\dIchi(\Sph^{m_j},\Sph^{n_j})
 \geq\Phichi(\lambda),
 \qquad
 \liminf_{j\to\infty}
 \dgh(\Sph^{m_j},\Sph^{n_j})
 \geq\Phichi(\lambda).
\end{equation}
Moreover,
\[
 \frac12c_{m_j,n_j}\longrightarrow\frac\pi4,
\]
and hence
\begin{equation}
\label{eq:ratio-gap-lower}
 \liminf_{j\to\infty}
 \left(
  \dgh(\Sph^{m_j},\Sph^{n_j})-\frac12c_{m_j,n_j}
 \right)
 \geq\Phichi(\lambda)-\frac\pi4.
\end{equation}
\end{theorem}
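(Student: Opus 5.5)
The plan is to feed the exponential growth rates from \Cref{prop:chromatic-entropy} into the one-scale mismatch of \Cref{cor:one-scale-mismatch}, with \(X=\Sph^{n_j}\) (the larger sphere) and \(Y=\Sph^{m_j}\). Fix \(x\in(0,\tfrac\pi2)\) and put \(s_0:=2x\in(0,\pi)\). By \Cref{prop:chromatic-entropy},
\[
 \tfrac1{n_j}\log\chprof{\Sph^{n_j}}(s_0)\to-\log\sin x .
\]
Hence
\[
 \tfrac1{m_j}\log\chprof{\Sph^{n_j}}(s_0)\to-\lambda\log\sin x .
\]
Next choose \(t_0<s_0\) with \(-\log\sin(t_0/2)<-\lambda\log\sin x\), that is, \(\sin(t_0/2)>(\sin x)^\lambda\). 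Concretely, set \(t_0:=2\arcsin\bigl((\sin x)^\lambda\bigr)+2\eta\) for small \(\eta>0\). Then \(t_0<s_0\) for small \(\eta\), because \(\lambda>1\) gives \((\sin x)^\lambda<\sin x\).

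By \Cref{prop:chromatic-entropy} again, \(\tfrac1{m_j}\log\chprof{\Sph^{m_j}}(t_0)\to-\log\sin(t_0/2)\). This limit is strictly smaller than \(-\lambda\log\sin x\). So for all large \(j\),
\[
 \chprof{\Sph^{n_j}}(s_0)>\chprof{\Sph^{m_j}}(t_0).
\]
\Cref{cor:one-scale-mismatch} then gives
\[
 \tfrac12\dIchi(\Sph^{m_j},\Sph^{n_j})\geq\tfrac12(s_0-t_0)=x-\arcsin\bigl((\sin x)^\lambda\bigr)-\eta .
\]
The interleaving distance is symmetric, so the order of the spheres does not matter. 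The same corollary, through \Cref{prop:chromatic-stability}, gives the identical bound for \(\dgh\). Letting \(\eta\downarrow0\) and then taking the supremum over \(x\) yields \(\Phichi(\lambda)\) in \eqref{eq:ratio-chromatic-lower}. The maximum in \eqref{eq:lambda-curve} is attained: the bracket tends to \(0\) at both endpoints and is positive inside, though a supremum suffices for the argument anyway.

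The main point to watch is the mixing of normalizations. The entropy limit for the larger sphere is taken in its own dimension \(n_j\) and converted using \(n_j/m_j\to\lambda\). Since the rate \(-\lambda\log\sin x\) is finite, the convergence \(n_j/m_j\to\lambda\) introduces only an \(o(1)\) error in the exponent. This is why a strict gap between the two exponents is needed, and the slack \(\eta\) provides it.

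For the second part, \Cref{cor:subexponential-convergence} (proportional case, with \(\lambda>1\)) gives \(c_{m_j,n_j}\to\tfrac\pi2\), so \(\tfrac12c_{m_j,n_j}\to\tfrac\pi4\). Since \(\tfrac12c_{m_j,n_j}\) converges, the liminf of the difference equals \(\liminf_j\dgh(\Sph^{m_j},\Sph^{n_j})-\tfrac\pi4\). This is at least \(\Phichi(\lambda)-\tfrac\pi4\) by the first part, which proves \eqref{eq:ratio-gap-lower}.
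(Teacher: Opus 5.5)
Your proof is correct and follows essentially the same route as the paper: normalize both chromatic-entropy limits by \(m_j\), feed the resulting mismatch at \(s_0=2x\) and \(t_0\) slightly above \(2\arcsin((\sin x)^\lambda)\) into \Cref{cor:one-scale-mismatch}, optimize over \(x\), and then subtract the limit \(\tfrac12c_{m_j,n_j}\to\tfrac\pi4\) supplied by \Cref{cor:subexponential-convergence}. The only cosmetic difference is that you build in the slack \(\eta\) explicitly, whereas the paper fixes \((t_0,s_0)\) with \(\lambda h(s_0)>h(t_0)\) and then lets \(t_0\downarrow 2\arcsin\bigl((\sin(s_0/2))^\lambda\bigr)\).
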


\begin{proof}
Fix \(0<t_0<s_0<\pi\).  By
\Cref{prop:chromatic-entropy},
\[
 \frac1{m_j}\log\chprof{\Sph^{n_j}}(s_0)
 \longrightarrow\lambda h(s_0),
 \qquad
 \frac1{m_j}\log\chprof{\Sph^{m_j}}(t_0)
 \longrightarrow h(t_0).
\]
If \(\lambda h(s_0)>h(t_0)\), then
\(\chprof{\Sph^{n_j}}(s_0)>\chprof{\Sph^{m_j}}(t_0)\) for all sufficiently
large \(j\).  {Corollary~\ref{cor:one-scale-mismatch}}
therefore gives
\[
 \liminf_{j\to\infty}
 \frac12\dIchi(\Sph^{m_j},\Sph^{n_j})\geq\frac{s_0-t_0}{2}.
\]
For fixed \(s_0\), the equality \(h(t_0)=\lambda h(s_0)\) has the
solution
\(t_\lambda(s_0)=2\arcsin\bigl((\sin(\tfrac{s_0}{2}))^\lambda\bigr)<s_0\).
Taking \(t_0\downarrow t_\lambda(s_0)\) through values in
\((t_\lambda(s_0),s_0)\), and then maximizing over
\(s_0\in(0,\pi)\), gives the first inequality in
\eqref{eq:ratio-chromatic-lower}; put \(s_0=2x\) to obtain
\eqref{eq:lambda-curve}.  The maximum exists because the displayed
function extends continuously to \([0,\tfrac\pi2]\), vanishes at both
endpoints, and is positive in the interior.  The Gromov--Hausdorff bound
then follows from \Cref{prop:chromatic-stability}.

Finally, the hypotheses and
\Cref{cor:subexponential-convergence} give the displayed convergence of
\(\tfrac12c_{m_j,n_j}\).  Subtracting this convergent quantity
from the preceding lower bound proves \eqref{eq:ratio-gap-lower}.
\end{proof}

The function \(\Phichi\) is continuous and strictly increasing on
\((1,\infty)\), with
\[
 \Phichi(1+)=0,
 \qquad
 \lim_{\lambda\to\infty}\Phichi(\lambda)=\frac\pi2.
\]
These facts follow directly from the maximand in
\eqref{eq:lambda-curve}: it increases strictly with \(\lambda\) at
every interior \(x\), extends continuously to the compact interval
\([0,\tfrac\pi2]\), and approaches \(x\) when \(\lambda\to\infty\)
with \(x<\tfrac\pi2\) fixed.

\begin{corollary}[Proportional strict separation]
\label{cor:proportional-counterexamples}
Let \(1\leq m_j<n_j\) be integer sequences such that
\(m_j\to\infty\) and \(\tfrac{n_j}{m_j}\to\lambda>1\), and put
\begin{equation}
\label{eq:lambda-threshold-variational}
 \lambda_*:=
 \min_{\pi/4<x<\pi/2}
 \frac{\log\sin(x-\pi/4)}{\log\sin x}.
\end{equation}
Then the minimum is attained and
\begin{equation}
\label{eq:lambda-threshold-characterization}
 \Phichi(\lambda)>\frac\pi4
 \quad\Longleftrightarrow\quad
 \lambda>\lambda_*.
\end{equation}
Consequently, if \(\lambda>\lambda_*\), then
\[
 \liminf_{j\to\infty}
 \left(
  \dgh(\Sph^{m_j},\Sph^{n_j})-\frac12c_{m_j,n_j}
 \right)>0,
\]
and hence
\(\tfrac12c_{m_j,n_j}<\dgh(\Sph^{m_j},\Sph^{n_j})\)
for all sufficiently large \(j\).  In particular, if \(\lambda\geq9\),
then the following explicit lower bound holds:
\[
 \liminf_{j\to\infty}
 \left(
  \dgh(\Sph^{m_j},\Sph^{n_j})-\frac12c_{m_j,n_j}
 \right)
 \geq
 \frac\pi{20}
 -\arcsin\left(\frac{38+17\sqrt5}{512}\right)
 >0.
\]
\end{corollary}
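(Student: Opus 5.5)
The proof reduces the condition \(\Phichi(\lambda)>\tfrac\pi4\) to a pointwise inequality and then reads off the threshold from the sign of \(\log\sin x\). Let
\[
g(x):=\frac{\log\sin(x-\pi/4)}{\log\sin x},\qquad x\in(\pi/4,\pi/2),
\]
and let \(f_\lambda(x):=x-\arcsin((\sin x)^\lambda)\) be the maximand in \eqref{eq:lambda-curve}.

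\emph{Step 1: \(\lambda_*\) is attained.} The function \(g\) is continuous and positive, since both logarithms are negative. As \(x\downarrow\pi/4\), the numerator tends to \(-\infty\) while the denominator tends to \(\log\sin(\pi/4)<0\), so \(g\to+\infty\). As \(x\uparrow\pi/2\), the numerator tends to \(-\tfrac12\log2\) while the denominator tends to \(0^-\), so again \(g\to+\infty\). Hence \(g\) attains its minimum at some \(x_0\in(\pi/4,\pi/2)\), and \(\lambda_*=g(x_0)\).

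\emph{Step 2: the equivalence \eqref{eq:lambda-threshold-characterization}.} For \(x\in(\pi/4,\pi/2)\), monotonicity of \(\arcsin\) gives
\[
f_\lambda(x)>\tfrac\pi4 \iff (\sin x)^\lambda<\sin(x-\pi/4) \iff \lambda\log\sin x<\log\sin(x-\pi/4) \iff \lambda>g(x),
\]
where the last step divides by the negative number \(\log\sin x\).
\begin{itemize}
\item If \(\lambda>\lambda_*=g(x_0)\), then \(\Phichi(\lambda)\geq f_\lambda(x_0)>\pi/4\).
\item If \(\lambda\leq\lambda_*\), then \(\lambda\leq g(x)\) for every \(x\in(\pi/4,\pi/2)\), so \(f_\lambda(x)\leq\pi/4\) there. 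For \(x\in(0,\pi/4]\) we have \(f_\lambda(x)<x\leq\pi/4\). Since the maximum in \eqref{eq:lambda-curve} is attained (Theorem~\ref{thm:ratio-chromatic-lower}), \(\Phichi(\lambda)\leq\pi/4\).
\end{itemize}

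\emph{Step 3: strict separation.} If \(\lambda>\lambda_*\), then \eqref{eq:ratio-gap-lower} gives
\[
\liminf_{j\to\infty}\left(\dgh(\Sph^{m_j},\Sph^{n_j})-\tfrac12c_{m_j,n_j}\right)\geq\Phichi(\lambda)-\tfrac\pi4>0.
\]
A positive \(\liminf\) yields the strict inequality \(\tfrac12c_{m_j,n_j}<\dgh(\Sph^{m_j},\Sph^{n_j})\) for all large \(j\).

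\emph{Step 4: the explicit bound for \(\lambda\geq9\).} Take \(x=3\pi/10\). Since \(0<\sin x<1\), \((\sin x)^\lambda\leq(\sin x)^9\), so
\[
\Phichi(\lambda)-\tfrac\pi4\geq\tfrac\pi{20}-\arcsin\bigl((\sin\tfrac{3\pi}{10})^9\bigr).
\]
Using \(\sin(3\pi/10)=\cos(\pi/5)=(1+\sqrt5)/4\), repeated squaring gives
\[
(1+\sqrt5)^2=6+2\sqrt5,\quad (1+\sqrt5)^4=56+24\sqrt5,\quad (1+\sqrt5)^8=6016+2688\sqrt5,
\]
and hence \((1+\sqrt5)^9=19456+8704\sqrt5\). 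Dividing by \(4^9=262144\) gives \((38+17\sqrt5)/512\). This is the stated constant, and by Step 3 it bounds the \(\liminf\) from below.

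\emph{Main obstacle: certifying positivity of the constant.} The two sides are close: \(\arcsin(0.1485\ldots)\approx0.149\) versus \(\pi/20\approx0.157\), so a crude estimate will not do. I will certify it with \(\sin y\geq y-y^3/6\) at \(y=\pi/20\) together with \(\pi>3.14\) and \(\sqrt5<2.2361\). This shows \(\sin(\pi/20)>(38+17\sqrt5)/512\), and monotonicity of \(\arcsin\) then gives the positivity. Since \(\lambda>\lambda_*\) is equivalent to \(\Phichi(\lambda)>\pi/4\), the same computation also shows \(\lambda_*<9\).
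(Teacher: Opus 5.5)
Your proposal is correct and follows the paper's proof essentially step for step: attainment of $\lambda_*$ from the blow-up at both endpoints, the pointwise equivalence obtained by dividing by the negative number $\log\sin x$, the appeal to the liminf bound $\Phichi(\lambda)-\tfrac\pi4$ from Theorem~\ref{thm:ratio-chromatic-lower}, and the substitution $x=3\pi/10$. The only difference is how you certify $(38+17\sqrt5)/512<\sin(\pi/20)$: the paper shows $a^9<3/20<\sin(\pi/20)$ using $\sqrt5<9/4$ and the half-angle identity $\cos^2(\pi/10)=(5+\sqrt5)/8$, whereas your bound $\sin y\geq y-y^3/6$ with $\pi>3.14$ and $\sqrt5<2.2361$ works equally well (roughly $0.1563>0.1485$).
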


\begin{proof}[Proof of \Cref{cor:proportional-counterexamples}]
{The quotient in
\eqref{eq:lambda-threshold-variational} is continuous and tends to
\(+\infty\) at both endpoints, so it attains its minimum.  For
\(\tfrac\pi4<x<\tfrac\pi2\), the inequality
\(x-\arcsin((\sin x)^\lambda)>\tfrac\pi4\) is equivalent to
\((\sin x)^\lambda<\sin(x-\tfrac\pi4)\), and hence, because
\(\log\sin x<0\), to
\(\lambda>\tfrac{\log\sin(x-\tfrac\pi4)}{\log\sin x}\).  Thus
\(\Phichi(\lambda)>\tfrac\pi4\) precisely when \(\lambda\) exceeds the
minimum in \eqref{eq:lambda-threshold-variational}.  This proves
\eqref{eq:lambda-threshold-characterization}; the first conclusion
follows from \eqref{eq:ratio-gap-lower}.}

{Now suppose \(\lambda\geq9\), and put
\(a:=\sin(\tfrac{3\pi}{10})=\tfrac{1+\sqrt5}{4}\).  The bound
\(\sqrt5<\tfrac94\) gives
\(a^9=\tfrac{38+17\sqrt5}{512}<\tfrac{305}{2048}<\tfrac3{20}\).
Moreover,
\(\cos^2(\tfrac\pi{10})=\tfrac{5+\sqrt5}{8}
<\tfrac{29}{32}<(\tfrac{191}{200})^2\); since
\(\cos(\tfrac\pi{10})>0\), it follows that
\(\sin^2(\tfrac\pi{20})
=\tfrac{1-\cos(\tfrac\pi{10})}{2}>\tfrac9{400}\).  Therefore
\(a^\lambda\leq a^9<\sin(\tfrac\pi{20})\).  Substitution of
\(x=\tfrac{3\pi}{10}\) in \eqref{eq:lambda-curve} yields
\(\Phichi(\lambda)-\tfrac\pi4
\geq\tfrac\pi{20}-\arcsin(a^9)>0\).  Using the displayed value of
\(a^9\) and then \eqref{eq:ratio-gap-lower} gives the stated exact
estimate.}
\end{proof}

\begin{figure}[t]
\centering
\begin{tikzpicture}[x=0.55cm,y=4.7cm]
  \fill[black!8]
    (9,0.7853982)
    -- plot[smooth] coordinates {
      (9,0.7935010)
      (9.5,0.8090715)
      (10,0.8236283)
      (11,0.8501208)
      (12,0.8736756)
      (13,0.8948132)
      (14,0.9139320)
      (15,0.9313429)
      (16,0.9472927)
      (18,0.9755686)
      (20,0.9999614)
    }
    -- (20,0.7853982) -- cycle;

  \draw[->,blue!70!black]
    (1,0) -- (20.6,0)
    node[below right] {$\lambda$};
  \draw[->,blue!70!black]
    (1,0) -- (1,1.08)
    node[above left,align=right] {$\Phichi(\lambda)$};

  \foreach \x in {1,5,9,12,16,20}{
    \draw[blue!70!black] (\x,0.012) -- (\x,-0.012)
      node[below] {\small $\x$};
  }
  \draw[blue!70!black] (1.12,0.7853982) -- (0.88,0.7853982)
  node[left] {\small $\tfrac\pi4$};
  \draw[blue!70!black] (1.12,1) -- (0.88,1)
    node[left] {\small $1$};

  \draw[black!55,densely dotted,semithick]
    (1,0.7853982) -- (20,0.7853982);

  \draw[blue!80!black,very thick]
    plot[smooth] coordinates {
      (1,0)
      (1.5,0.1624878)
      (2,0.2756428)
      (2.5,0.3612504)
      (3,0.4293856)
      (3.5,0.4855204)
      (4,0.5329471)
      (4.5,0.5737940)
      (5,0.6095117)
      (5.5,0.6411310)
      (6,0.6694089)
      (6.5,0.6949166)
      (7,0.7180946)
      (7.5,0.7392898)
      (8,0.7587793)
      (8.25,0.7679564)
      (8.5,0.7767881)
      (8.75,0.7852963)
      (9,0.7935010)
      (9.5,0.8090715)
      (10,0.8236283)
      (11,0.8501208)
      (12,0.8736756)
      (13,0.8948132)
      (14,0.9139320)
      (15,0.9313429)
      (16,0.9472927)
      (18,0.9755686)
      (20,0.9999614)
    };

  \draw[black!55,densely dotted,thick]
    (9,0) -- (9,0.7935010);
  \fill[black!55] (9,0.7935010) circle[radius=1.35pt];
\end{tikzpicture}
 \caption{Numerical plot of the chromatic lower curve
\(\Phichi(\lambda)\) for
\(\liminf\dgh(\Sph^{m_j},\Sph^{n_j})\), compared with the proportional
limit \(\tfrac12c_{m_j,n_j}\to\tfrac\pi4\).  The shaded region begins at the
rigorously certified value \(\lambda=9\).}
\label{fig:chromatic-ratio-curve}
\end{figure}

\FloatBarrier

Thus, if \(m_j\to\infty\) and
\(n_j/m_j\to\lambda\geq9\), the chromatic lower bound is eventually
separated by a positive amount from the Vietoris--Rips lower bound.

 \endgroup

\section{\texorpdfstring{{Questions and open problems}}{Questions and open problems}}
\label{sec:discussion-open-problems}

{
\Cref{thm:intro-strict-separation} answers
\eqref{eq:equality-question} negatively but leaves open the sharpness of
the Gromov--Hausdorff and Vietoris--Rips bounds used in its proof.  The
following questions also record the principal unresolved asymptotic
problems for the \(c\)-matrix.

\par\smallskip\noindent\textbf{(1)}
 For which \(\lambda>1\) does every pair of integer sequences
 \(m_j<n_j\) satisfying \(m_j\to\infty\) and
 \(n_j/m_j\to\lambda\) also satisfy
 \(\liminf_{j\to\infty}
 [\dgh(\Sph^{m_j},\Sph^{n_j})-
 \tfrac12c_{m_j,n_j}]>0\)?
 \Cref{thm:intro-strict-separation,cor:proportional-counterexamples}
 prove this for \(\lambda>\lambda_*\).  Determine whether that condition
 is sharp.

\par\smallskip\noindent\textbf{(2)}
 Determine \(\dIBor(\Sph^m,\Sph^n)\) and
 \(\dIchi(\Sph^m,\Sph^n)\), and decide when either inequality in
 \Cref{prop:chromatic-stability} is an equality.  For each fixed \(m\),
 determine the least \(n>m\), if one exists, for which
 \(\tfrac12c_{m,n}<\dgh(\Sph^m,\Sph^n)\).  The pair constructed in
 \Cref{thm:chromatic-counterexample} gives one finite instance.

\par\smallskip\noindent\textbf{(3)}
 For each fixed \(\tfrac\pi2<r<\pi\), determine whether
 \(\lim_{k\to\infty}k^{-1}\log B_r(k)\) exists.  If it does, determine
 its value; otherwise determine its lower and upper limits.
 \Cref{def:exponential-capacity,thm:exponential-capacity-bounds} give
 the present bounds.  Relate the answer to asymptotic packing and
 covering rates in real projective spaces.

\par\smallskip\noindent\textbf{(4)}
 For each fixed \(\lambda>1\), determine the asymptotic order of
 \(c_{m,\lfloor\lambda m\rfloor}-\tfrac\pi2\), and decide whether a
 normalization gives a limit independent of rounding and subsequences.
 \Cref{cor:subexponential-convergence} proves convergence to zero,
 \Cref{cor:c-seed-ray} supplies upper bounds along regions generated by
 individual entries, and \Cref{thm:random-projective-code-bound} gives
 the general upper error
 \(O(\sqrt{\tfrac{\log m}{m}})\).

}

\appendix

\section{Explicit equiangular projective-code bounds}
\label{app:equiangular-projective-codes}

\begingroup
\setlength{\abovedisplayskip}{6pt plus 2pt minus 2pt}
\setlength{\belowdisplayskip}{6pt plus 2pt minus 2pt}
\setlength{\abovedisplayshortskip}{3pt plus 2pt minus 1pt}
\setlength{\belowdisplayshortskip}{4pt plus 2pt minus 1pt}

A real equiangular tight frame in \(\mathbb R^D\) consists of unit
vectors \(x_1,\ldots,x_N\) having constant absolute pairwise inner
product and satisfying
\[
 \sum_{i=1}^N\langle z,x_i\rangle x_i=\tfrac NDz
 \qquad(z\in\mathbb R^D).
\]
Its coherence is the Welch value
\(\vartheta=\sqrt{(N-D)/(D(N-1))}\); see
\cite{welch1974lower} and
\cite[Sections~1--2]{strohmer2003grassmannian}.

\begin{corollary}[Three equiangular projective codes]
\label{cor:equiangular-code-seeds}

Real equiangular tight frames with parameters
\[
 (D,N,\vartheta)
 =
 (3,6,\tfrac1{\sqrt5}),\qquad
 (7,28,\tfrac13),\qquad
 (23,276,\tfrac15)
\]
exist \cite[Section~3, pp.~7--8]{gillespie2018equiangular}.  Put
\(\theta_{\mathrm{ico}}:=\arccos(-\tfrac1{\sqrt5})\).
For every \(k\geq1\) and \(s\geq0\), these frames give
\begin{align*}
 c_{3k-1+s,\,6k-1+s}
 &\leq
 \theta_{\mathrm{ico}},
 \\
 c_{7k-1+s,\,28k-1+s}
 &\leq
 \arccos\left(-\frac13\right)
 =
 \zeta_2,
 \\
 c_{23k-1+s,\,276k-1+s}
 &\leq
 \arccos\left(-\frac15\right)
 =
 \zeta_4.
\end{align*}
In particular, setting \(k=1\) and \(s=0\) in the first displayed
bound gives
\begin{equation}
\label{eq:c25-icosahedral-bound}
 c_{2,5}
 \leq
 \theta_{\mathrm{ico}}
 <
 \tfrac{2\pi}{3}.
\end{equation}

\end{corollary}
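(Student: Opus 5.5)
The three bounds follow from the projective-code bound (\Cref{prop:projective-code-seeds}) followed by the propagation in \Cref{rem:projective-code-propagation}. For each triple \((D,N,\vartheta)\) I take the cited real equiangular tight frame, with unit vectors \(x_1,\ldots,x_N\in\mathbb R^D\). Their lines form a projective code whose coherence equals the common value \(|\langle x_i,x_j\rangle|=\vartheta\). I will check this against the Welch formula \(\vartheta=\sqrt{(N-D)/(D(N-1))}\):
\[
 \sqrt{\tfrac{3}{15}}=\tfrac1{\sqrt5},\qquad
 \sqrt{\tfrac{21}{189}}=\tfrac13,\qquad
 \sqrt{\tfrac{253}{23\cdot275}}=\sqrt{\tfrac{1}{25}}=\tfrac15.
\]
The hypotheses \(N\geq\max\{D,2\}\) and \(\vartheta<1\) hold in all three cases. \Cref{prop:projective-code-seeds} then gives \(c_{D-1,N-1}\leq\arccos(-\vartheta)\), that is, \(c_{2,5}\leq\theta_{\mathrm{ico}}\), \(c_{6,27}\leq\arccos(-\tfrac13)\), and \(c_{22,275}\leq\arccos(-\tfrac15)\).

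Next I will apply \Cref{cor:c-seed-ray}(i), as recorded in \Cref{rem:projective-code-propagation}, to each of these three bounds. For every \(j=k\geq1\) and \(s\geq0\) this gives \(c_{kD-1+s,\,kN-1+s}\leq\arccos(-\vartheta)\), which is exactly the displayed list. The identifications \(\arccos(-\tfrac13)=\zeta_2\) and \(\arccos(-\tfrac15)=\zeta_4\) are immediate from \(\zeta_m=\arccos(-\tfrac1{m+1})\).

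The last step is \eqref{eq:c25-icosahedral-bound}, which is the case \(k=1\), \(s=0\), together with the strict inequality \(\theta_{\mathrm{ico}}<\tfrac{2\pi}3=\arccos(-\tfrac12)\). Since \(\arccos\) is strictly decreasing, this inequality is equivalent to \(-\tfrac1{\sqrt5}>-\tfrac12\), that is, to \(\sqrt5>2\). There is no real obstacle in this proof. The only external input is the existence of the three frames, which is taken from the cited reference, and the only thing that needs care is the Welch coherence arithmetic, in particular the third case, \(23\cdot275=6325=25\cdot253\).
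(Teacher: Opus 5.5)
Your proposal is correct and follows essentially the same route as the paper: apply \Cref{prop:projective-code-seeds} to each of the three frames, propagate with \Cref{cor:c-seed-ray}\textup{(i)}, and obtain the strict inequality from \(\tfrac1{\sqrt5}<\tfrac12\). The only difference is that the paper identifies the first code as the six icosahedral axes, while you add a check of the Welch coherence values; neither affects the argument.
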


\begin{proof}

The first code is formed by the six axes through antipodal vertices
of a regular icosahedron.  Apply
\Cref{prop:projective-code-seeds} and then
\Cref{cor:c-seed-ray}\textup{(i)} to the three codes.  The strict
inequality in \eqref{eq:c25-icosahedral-bound} follows from
\(\tfrac1{\sqrt5}<\tfrac12\).

\end{proof}

These are upper bounds obtained from particular projective codes;
neither \Cref{prop:projective-code-seeds} nor
\Cref{cor:equiangular-code-seeds} asserts their sharpness.  In
particular, \eqref{eq:c25-icosahedral-bound} does not determine
\(c_{2,5}\).

\endgroup

\section{Spherical cap estimates and finite-dimensional consequences}
\label{app:spherical-cap-estimates}

We collect the cap-volume estimates and finite discretizations used in
the chromatic lower bounds.

\begingroup
\setlength{\abovedisplayskip}{6pt plus 2pt minus 2pt}
\setlength{\belowdisplayskip}{6pt plus 2pt minus 2pt}
\setlength{\abovedisplayshortskip}{3pt plus 2pt minus 1pt}
\setlength{\belowdisplayshortskip}{4pt plus 2pt minus 1pt}

\subsection{Technical estimates and finite witnesses}
\label{ssec:spherical-cap-estimates}

The spherical isodiametric inequality of B\"or\"oczky and Sagmeister
\cite[Theorem~1.2]{boroczky2019isodiametric} gives the following
estimate.  Their theorem applies for \(d\geq2\) throughout the full
range \(0<D<\pi\).

\begin{lemma}[Spherical diameter--volume bound]
\label{lem:isodiametric}
Let \(d\geq2\).  If \(E\subseteq\Sph^d\) is measurable and
\(\operatorname{diam}(E)\leq D\) with \(0<D<\pi\), then
\[
 \sigma_d(E)\leq \CapVol_d(\tfrac D2).
\]
\end{lemma}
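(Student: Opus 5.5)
The plan is to deduce the lemma directly from the Böröczky--Sagmeister spherical isodiametric theorem \cite[Theorem~1.2]{boroczky2019isodiametric}. That theorem says that, among measurable subsets of \(\Sph^d\) (\(d\geq2\)) with diameter at most \(D\in(0,\pi)\), geodesic caps of radius \(D/2\) have the largest normalized volume. The work therefore lies in checking that its hypotheses match ours and that its normalizations agree with \(\sigma_d\) and \(\CapVol_d\).

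First, the setup. Fix a measurable \(E\subseteq\Sph^d\) with \(\operatorname{diam}(E)\leq D<\pi\). The diameter here is computed in the geodesic metric \(d_d=\arccos\langle\cdot,\cdot\rangle\) of \Cref{sec:preliminaries}. If the cited theorem is phrased for closed sets or for convex bodies, we reduce to that case by passing to the closure \(\overline E\). This does not change the diameter, since \(d_d\) is continuous. It can only increase the measure, since \(\sigma_d(E)\leq\sigma_d(\overline E)\). It therefore suffices to prove \(\sigma_d(\overline E)\leq\CapVol_d(D/2)\).

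Second, the translation of normalizations. We check that the theorem's "spherical cap of radius \(D/2\)" is a geodesic ball of that radius, so that its normalized volume is \(\CapVol_d(D/2)\) in the sense of \Cref{def:spherical-cap-volume}. Its diameter is \(D\), because \(D/2<\pi/2\) and so no antipodal pair occurs. If the theorem is stated for Hausdorff or Lebesgue measure rather than normalized measure, we divide both sides by \(\operatorname{vol}(\Sph^d)\), which turns the conclusion into the displayed inequality.

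Third, the edge cases, which I expect to be the main (mild) obstacle: the restriction on \(d\) and the range of \(D\). As the paper notes, the theorem covers \(d\geq2\) on the whole range \(0<D<\pi\). If that theorem were stated only for \(D\) below some threshold, the plan is to fall back on the classical spherical isodiametric inequality (Lévy/Schmidt). That inequality can be proved by two-point symmetrization, which preserves measure and does not increase diameter, followed by a compactness argument in the Hausdorff metric to reach a cap. The hypothesis \(D<\pi\) matters here: it keeps the extremal set an honest cap rather than a hemisphere-type degenerate set. It also keeps \(\CapVol_d(D/2)<1/2\), which is consistent with the strict monotonicity recorded in \Cref{def:spherical-cap-volume}.
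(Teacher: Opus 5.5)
Your proposal is correct and is the paper's approach: the paper gives no independent argument and takes this lemma directly from B\"or\"oczky--Sagmeister's Theorem~1.2, noting that it covers \(d\geq2\) and all \(0<D<\pi\). Your closure reduction and normalization checks are harmless bookkeeping. One caveat, which does not affect your main line: in the hypothetical case where the cited result applied only to convex sets, taking closures would not reduce to it. Closures need not be convex, and spherical convex hulls can enlarge diameter once \(D>\pi/2\). That case is moot here, because the lemma is applied in \Cref{prop:finite-discretization} to nonconvex finite unions of balls, and the paper invokes the cited theorem for general sets.
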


The restriction \(d\geq2\) causes no loss below: the diameter--volume
estimate is applied on \(\Sph^n\) with \(n\geq2\), whereas the separate
packing-net upper estimate remains valid on \(\Sph^1\).

\begin{lemma}[Cap-volume estimate]
\label{lem:cap-volume}
For \(d\geq1\) and \(0\leq\rho\leq\tfrac\pi2\),
\[
 \CapVol_d(\rho)\leq\frac12(\sin\rho)^d.
\]
\end{lemma}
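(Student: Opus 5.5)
The plan is to use the classical lower-dimensional slicing of the sphere: a cap is swept by the points whose first coordinate \(\cos t\) ranges over \(t\in[0,\rho]\). Write
\[
 \CapVol_d(\rho)=\frac{\int_0^\rho\sin^{d-1}t\,dt}{\int_0^\pi\sin^{d-1}t\,dt},
\]
which holds for all \(d\geq1\); for \(d=1\) it reads \(\rho/\pi\).

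The key steps are a substitution in the numerator and a Wallis-type lower bound for the denominator. In the numerator, substitute \(u=\sin t\), which is valid since \(\rho\leq\tfrac\pi2\). Then \(dt=du/\sqrt{1-u^2}\), and the numerator becomes \(\int_0^{\sin\rho}u^{d-1}(1-u^2)^{-1/2}\,du\). The factor \((1-u^2)^{-1/2}\) blows up near \(u=1\), so I would not bound it crudely. Instead I would compare the integrands directly. Since \(\cos t\leq1\), we have
\[
 \int_0^\rho\sin^{d-1}t\,dt\geq\int_0^\rho\sin^{d-1}t\cos t\,dt=\frac{\sin^d\rho}{d},
\]
but this is a lower bound, the wrong direction. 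The better plan is a monotonicity argument in \(\rho\). Define
\[
 g(\rho):=\frac12\sin^d\rho-\CapVol_d(\rho).
\]
Then \(g(0)=0\) and \(g(\tfrac\pi2)=\tfrac12-\tfrac12=0\), the latter by symmetry of the hemisphere. Moreover
\[
 g'(\rho)=\sin^{d-1}\rho\left(\frac d2\cos\rho-\frac1{I_d}\right),
 \qquad I_d:=\int_0^\pi\sin^{d-1}t\,dt.
\]
The bracket is strictly decreasing on \([0,\tfrac\pi2]\). Hence \(g'\) is first positive (or never positive) and then negative, so \(g\) increases and then decreases. A function that vanishes at both endpoints with this unimodal shape is \(\geq0\) throughout. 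In the degenerate case where \(g'\leq0\) everywhere, \(g\) would be identically zero, which is impossible, so the unimodal case is the one that occurs.

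The step I expect to be the main obstacle is verifying the sign pattern of \(g'\), specifically that it is indeed eventually negative and initially nonnegative. At \(\rho=\tfrac\pi2\) the bracket equals \(-1/I_d<0\), so the sign at the right end is fine. At \(\rho=0\) the sign is actually irrelevant. If the bracket is negative at \(0\), it is negative everywhere, so \(g\) decreases from \(0\) to \(0\) and must be constant. That is impossible because \(\sin^{d-1}>0\) in the interior. So the bracket must change sign exactly once, and the unimodality argument goes through with no Wallis estimate needed.

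Finally, \(d=1\) is a direct check: the claim is \(\rho/\pi\leq\tfrac12\sin\rho\), which follows from concavity of \(\sin\) on \([0,\tfrac\pi2]\), i.e. \(\sin\rho\geq 2\rho/\pi\). This case is also covered by the general argument above.
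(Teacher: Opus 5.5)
Your proof is correct, but it takes a different route from the paper's.

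The paper makes exactly the substitution \(t=\sin\theta\) that you abandoned. It handles the singular factor \((1-t^2)^{-1/2}\) by rescaling \(t=uv\) with \(u=\sin\rho\), not by bounding it crudely. The numerator then becomes \(u^d\int_0^1 v^{d-1}(1-u^2v^2)^{-1/2}\,dv\). This is dominated pointwise by \(u^d\int_0^1 v^{d-1}(1-v^2)^{-1/2}\,dv\), and that integral is exactly half the denominator once the denominator is written symmetrically about \(\tfrac\pi2\).

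You instead use a single-crossing argument. The function \(g=\tfrac12\sin^d\rho-\CapVol_d(\rho)\) vanishes at \(0\) and at \(\tfrac\pi2\). Its derivative \(g'\) is a positive factor times the strictly decreasing function \(\tfrac d2\cos\rho-1/I_d\). So \(g\) first increases and then decreases, which gives \(g\geq\min\{g(0),g(\tfrac\pi2)\}=0\).

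Both proofs rest on the hemisphere identity \(\CapVol_d(\tfrac\pi2)=\tfrac12\). The paper's pointwise comparison gives slightly more: \(\rho\mapsto\CapVol_d(\rho)/\sin^d\rho\) is nondecreasing on \((0,\tfrac\pi2]\), and it needs no sign analysis. Your argument needs only the derivative formula and the two endpoint values, and never touches the singular integrand.

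In a write-up, drop the exploratory false starts, including the wrong-direction bound. You can also drop the separate treatment of the degenerate case: \(g\geq\min\{g(0),g(\tfrac\pi2)\}\) holds whatever sign the bracket has at \(0\).
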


\begin{proof}
In geodesic polar coordinates about a point of \(\Sph^d\), the round
metric and its volume form are
\[
 d\theta^2+\sin^2\!\theta\,g_{\Sph^{d-1}},
 \qquad
 \sin^{d-1}\!\theta\,d\theta\,
 d\operatorname{vol}_{\Sph^{d-1}};
\]
see \cite[Sections~1.2 and~4.2.1, pp.~7 and~117]{petersen2016riemannian}.
Consequently,
\[
 \CapVol_d(\rho)
 =
 \frac{\displaystyle\int_0^\rho\sin^{d-1}\!\theta\,d\theta}
 {\displaystyle\int_0^\pi\sin^{d-1}\!\theta\,d\theta}.
\]
Put \(u=\sin\rho\).  Since \(0\leq\rho\leq\tfrac\pi2\), the
substitution \(t=\sin\theta\), together with symmetry about
\(\theta=\tfrac\pi2\), gives
\[
 \CapVol_d(\rho)=
 \frac{\displaystyle\int_0^u
 t^{d-1}(1-t^2)^{-1/2}\,dt}
 {\displaystyle 2\int_0^1
 t^{d-1}(1-t^2)^{-1/2}\,dt}.
\]
After substituting \(t=uv\), the numerator is
\[
 u^d\int_0^1v^{d-1}(1-u^2v^2)^{-1/2}\,dv
 \leq
 u^d\int_0^1v^{d-1}(1-v^2)^{-1/2}\,dv.
\]
Dividing proves the lemma.
\end{proof}

We use the following standard volumetric net estimate; see
\cite[Corollary~4.2.11, pp.~112--113]{vershynin2026high}.

\begin{lemma}[Chordal net bound]
\label{lem:chordal-net}
For \(m\geq0\) and \(0<\delta<2\), there is a finite set
\(Q\subseteq\Sph^m\) such that every point of \(\Sph^m\) has Euclidean
distance less than \(\delta\) from \(Q\) and
\[
 |Q|\leq\left(1+\frac{2}{\delta}\right)^{m+1}.
\]
\end{lemma}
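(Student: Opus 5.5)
The plan is the standard maximal-separated-set volumetric argument, run in Euclidean (chordal) distance on \(\Sph^m\subseteq\mathbb R^{m+1}\). First, choose a maximal subset \(Q\subseteq\Sph^m\) whose points are pairwise at Euclidean distance at least \(\delta\). Such a set exists and is finite: any \(\delta\)-separated subset of the compact sphere is finite, and its cardinality will be bounded uniformly by the volume comparison below. Hence we can build \(Q\) greedily, and the process stops after finitely many steps.

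For the covering property, I will use maximality. If some \(x\in\Sph^m\) had Euclidean distance at least \(\delta\) from every point of \(Q\), then \(Q\cup\{x\}\) would still be \(\delta\)-separated, contradicting maximality. So every point lies at distance strictly less than \(\delta\) from \(Q\). Using the non-strict separation threshold ``\(\geq\delta\)'' in the definition of \(Q\) is exactly what delivers the strict ``less than \(\delta\)'' in the conclusion.

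For the cardinality bound, I will place open Euclidean balls of radius \(\delta/2\) in \(\mathbb R^{m+1}\) about the points of \(Q\).
\begin{itemize}
\item They are pairwise disjoint, since the centers are at least \(\delta\) apart.
\item They all lie in the ball of radius \(1+\delta/2\) about the origin.
\end{itemize}
Comparing Lebesgue volumes gives
\[
|Q|\,(\delta/2)^{m+1}\leq(1+\delta/2)^{m+1},
\]
that is, \(|Q|\leq(1+2/\delta)^{m+1}\). The same comparison applied to any \(\delta\)-separated set bounds its size, which justifies the finiteness claim used above. The hypothesis \(0<\delta<2\) is not really needed. It only keeps the statement nondegenerate, since for \(\delta\geq2\) a single point or a pair of points already works.

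The case \(m=0\) needs no separate treatment: \(\Sph^0=\{\pm1\}\subseteq\mathbb R\) and the same argument applies verbatim with \(m+1=1\). There is no real obstacle here. The only points needing care are the strict-versus-non-strict convention just described, and ensuring that the Euclidean embedding (rather than the geodesic metric) is the one used throughout.
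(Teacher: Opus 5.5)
Your proposal is correct and follows the paper's proof: a maximal set with pairwise Euclidean distances at least \(\delta\), maximality for the strict covering, and comparison of the disjoint open \(\delta/2\)-balls with the ball of radius \(1+\delta/2\) in \(\mathbb R^{m+1}\). Your added justification that a maximal separated set exists and is finite (since the volume bound caps every \(\delta\)-separated set) fills in a step the paper leaves implicit.
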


\begin{proof}
Take a maximal set whose distinct points have Euclidean distance at least
\(\delta\).  Maximality gives the strict covering property.  The open
Euclidean balls of radius \(\tfrac\delta2\) about the points of \(Q\) are
disjoint and lie in the ball of radius \(1+\tfrac\delta2\) in
\(\mathbb R^{m+1}\).  Comparing their \((m+1)\)-dimensional volumes gives
the stated cardinality bound.
\end{proof}

The next proposition is the finite-witness step.  Its finiteness is
essential: it supplies both strict diameter control and measurable sets to
which \Cref{lem:isodiametric} can be applied.

\begin{proposition}[Finite discretization of a spherical Borsuk graph]
\label{prop:finite-discretization}
Let \(d\geq2\), let \(\eta,s>0\), and suppose \(s+2\eta<\pi\).  If
\(A\) is a finite maximal \(\eta\)-separated subset of \(\Sph^d\) in the
round geodesic metric, then
\[
 \chprof{A}(s)\geq
 \frac{1}{\CapVol_d((s+2\eta)/2)}.
\]
\end{proposition}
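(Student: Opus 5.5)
The plan is a Voronoi-cell volume argument. Fix a proper coloring of \(\Bor_{\geq s}(A)\) with \(k=\chprof{A}(s)\) colors; the bound holds trivially if \(k=\infty\), although it is finite here because \(A\) is finite. Let the color classes be \(A_1,\ldots,A_k\). Each class is a finite set whose distinct points lie at geodesic distance strictly less than \(s\). Since \(A_j\) is finite, \(\operatorname{diam}(A_j)<s\).

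Next, fatten each class. Maximality of the \(\eta\)-separated set \(A\) means every \(y\in\Sph^d\) satisfies \(d_d(y,A)<\eta\); otherwise \(y\) could be added to \(A\). If we used closed separation, we get \(\leq\eta\) instead, which is equally good. Put
\[
 E_j:=\{y\in\Sph^d: d_d(y,A_j)\leq\eta\}.
\]
Each \(E_j\) is a finite union of closed geodesic balls, hence closed and measurable, and \(\bigcup_j E_j=\Sph^d\). By the triangle inequality,
\[
 \operatorname{diam}(E_j)\leq\operatorname{diam}(A_j)+2\eta<s+2\eta<\pi.
\]
This is exactly where finiteness is used: it turns the pairwise strict inequality into a strict bound on the diameter, and it makes each \(E_j\) a measurable set.

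Now apply \Cref{lem:isodiametric} with \(D:=s+2\eta\in(0,\pi)\), which is allowed since \(d\geq2\). This gives \(\sigma_d(E_j)\leq\CapVol_d((s+2\eta)/2)\) for every \(j\). Since the sets \(E_j\) cover the sphere, subadditivity gives
\[
 1\leq\sum_{j=1}^k\sigma_d(E_j)\leq k\,\CapVol_d\bigl((s+2\eta)/2\bigr),
\]
which rearranges to the claimed bound.

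The only delicate point is the covering step: \(E_j\) must be defined with radius \(\eta\), matching the maximality radius, so that the sets cover the sphere while their diameters stay below \(\pi\). The hypothesis \(s+2\eta<\pi\) guarantees the latter, so the isodiametric lemma applies. Beyond that, the argument is routine.
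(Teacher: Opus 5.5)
Your proof is correct and is essentially the paper's argument: you fatten each finite color class by closed \(\eta\)-balls, use finiteness both to get \(\operatorname{diam}<s\) and to make the sets measurable, note that maximality makes the fattened classes cover \(\Sph^d\), and then apply \Cref{lem:isodiametric} with \(D=s+2\eta\) and sum the measures. The only difference is labeling: you call it a Voronoi-cell argument, but what you actually use, as the paper does, is the cover by unions of closed \(\eta\)-balls.
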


\begin{proof}
Maximality makes \(A\) a strict \(\eta\)-net.  Let
\(C_1,\ldots,C_q\) be the nonempty classes of a proper coloring of
\(\Bor_{\geq s}(A)\).  Every pair of points in a class is at distance
strictly below \(s\).  Because each \(C_i\) is finite, this strengthens to
\(\operatorname{diam}(C_i)<s\).
Define
\(E_i:=\bigcup_{a\in C_i}\overline B(a,\eta)\).
These sets are measurable finite unions of closed balls, they cover
\(\Sph^d\), and \(\operatorname{diam}(E_i)<s+2\eta\).  Therefore
\Cref{lem:isodiametric} gives
\[
 1\leq\sum_{i=1}^q\sigma_d(E_i)
 \leq q\,\CapVol_d((s+2\eta)/2),
\]
which proves the proposition.
\end{proof}

\subsection[General cap bounds for chromatic profiles]
{General cap bounds for chromatic profiles}

We first compare the full chromatic profile with cap volume.  The
dimension split in the statement records exactly what is supplied by the
isodiametric citation.

\begin{proposition}[Cap bounds for the chromatic profile]
\label{prop:cap-profile-bounds}
For every integer \(d\geq1\) and every \(0<u<\pi\),
\begin{equation}
\label{eq:cap-profile-upper-bound}
 \chprof{\Sph^d}(u)
 \leq
 \left\lfloor\frac{1}{\CapVol_d(u/4)}\right\rfloor.
\end{equation}
If \(d\geq2\), then the full sandwich holds:
\begin{equation}
\label{eq:cap-profile-bounds}
 \left\lceil\frac{1}{\CapVol_d(u/2)}\right\rceil
 \leq \chprof{\Sph^d}(u)
 \leq
 \left\lfloor\frac{1}{\CapVol_d(u/4)}\right\rfloor.
\end{equation}
\end{proposition}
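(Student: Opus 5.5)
The plan is to prove the two inequalities separately: the upper bound by a net argument, the lower bound by the isodiametric inequality.

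\emph{Upper bound \eqref{eq:cap-profile-upper-bound}, all \(d\geq1\).} Take a maximal subset \(P\subseteq\Sph^d\) whose distinct points are at geodesic distance at least \(u/2\). The open caps of radius \(u/4\) about the points of \(P\) are pairwise disjoint, so \(|P|\,\CapVol_d(u/4)\leq1\). Since \(|P|\) is an integer, \(|P|\leq\lfloor1/\CapVol_d(u/4)\rfloor\). By maximality every point lies at distance strictly less than \(u/2\) from some point of \(P\). Assign each point of \(\Sph^d\) to one such net point. Two points with the same assignment are at distance strictly less than \(u\) by the triangle inequality, so they are nonadjacent in \(\Bor_{\geq u}(\Sph^d)\). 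This gives a proper coloring with \(|P|\) colors. The argument uses only disjointness of caps, which is why it holds for \(d=1\) as well.

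\emph{Lower bound, \(d\geq2\).} Let \(C_1,\dots,C_q\) be the color classes of a proper coloring of \(\Bor_{\geq u}(\Sph^d)\). Distinct points in the same class are at distance less than \(u\), so each class has diameter at most \(u\). The classes need not be measurable, and this is the main obstacle. I see two ways around it.

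\textbf{Closures.} The closure \(\overline{C_i}\) is closed, hence measurable, and has the same diameter, at most \(u\). The closures still cover \(\Sph^d\). Lemma~\ref{lem:isodiametric} with \(D=u<\pi\) then gives
\[
 1\leq\sum_i\sigma_d(\overline{C_i})\leq q\,\CapVol_d(u/2).
\]

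\textbf{Finite discretization.} Alternatively, apply Proposition~\ref{prop:finite-discretization} to a finite maximal \(\eta\)-separated set \(A\) with \(u+2\eta<\pi\). Since \(\Bor_{\geq u}(A)\) is an induced subgraph, \(\chprof{\Sph^d}(u)\geq\chprof{A}(u)\geq1/\CapVol_d((u+2\eta)/2)\). Letting \(\eta\downarrow0\) and using continuity of \(\CapVol_d\) gives the same bound.

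Either route yields \(q\geq1/\CapVol_d(u/2)\), and since \(q\) is an integer, \(q\geq\lceil1/\CapVol_d(u/2)\rceil\). Combining this with the upper bound gives the sandwich \eqref{eq:cap-profile-bounds}.
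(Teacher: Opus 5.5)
Your proof is correct. The upper bound is the paper's argument: a maximal $\tfrac u2$-separated set, disjoint $\tfrac u4$-caps, and assignment to a strict $\tfrac u2$-net. Your ``finite discretization'' route for the lower bound is also exactly how the paper proves it: it applies \Cref{prop:finite-discretization} to a finite maximal $\eta$-separated set and lets $\eta\downarrow0$.

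Your ``closures'' route is a genuinely different and more direct argument. Passing to the closure does not increase diameter, so the sets $\overline{C_i}$ are Borel sets of diameter at most $u<\pi$ that still cover $\Sph^d$. \Cref{lem:isodiametric} then applies at once and gives $q\,\CapVol_d(u/2)\geq1$, with no auxiliary parameter $\eta$ and no continuity or limiting step. This shows that the measurability issue the paper stresses (in \Cref{prop:finite-discretization} and \Cref{rem:factor-two-mechanism}) can be handled for this proposition by a one-line closure argument.

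What the paper's route buys instead is a finite certificate. It bounds the chromatic number of a finite induced subgraph $\Bor_{\geq s}(A)$, which is the form used in \Cref{lem:source-chromatic}, in the finite-subset clause of \Cref{cor:one-scale-mismatch}, and in \Cref{rem:finite-proof}. Your closure argument bounds $\chprof{\Sph^d}(u)$ itself but produces no finite witness.
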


\begin{proof}
Assume \(d\geq2\) for the lower bound.  Choose \(\eta>0\) with
\(u+2\eta<\pi\), and let
\(A\subseteq\Sph^d\) be a finite maximal \(\eta\)-separated set.  By
\Cref{prop:finite-discretization},
\[
 \chprof{\Sph^d}(u)
 \geq\chprof{A}(u)
 \geq\frac{1}{\CapVol_d(u/2+\eta)}.
\]
Let \(\eta\downarrow0\), use continuity of \(\CapVol_d\), and use the
integrality of the chromatic number to obtain the ceiling.

For the upper bound, let \(Q\subseteq\Sph^d\) be a finite maximal set
whose distinct points are at geodesic distance at least \(\tfrac u2\).  The
open \(\tfrac u4\)-caps centered at the points of \(Q\) are pairwise disjoint,
so \(|Q|\CapVol_d(u/4)\leq1\).
Maximality makes \(Q\) a strict \(\tfrac u2\)-net.  Assign every point of the
sphere to a center at distance strictly below \(\tfrac u2\).
Any two points of a resulting class are then at distance strictly
below \(u\), so the class is independent in \(\Bor_{\geq u}(\Sph^d)\).
Thus
\(\chprof{\Sph^d}(u)\leq|Q|\); integrality gives the floor in
\eqref{eq:cap-profile-upper-bound} and \eqref{eq:cap-profile-bounds}.
\end{proof}

Applying the two sides of this estimate in different dimensions gives a
directly computable Gromov--Hausdorff lower bound.

\Needspace{10\baselineskip}
\begin{proposition}[Chromatic cap lower bound for
Gromov--Hausdorff distance]
\label{prop:general-cap-bound}
Let \(0<m<n\) be integers.  If \(0<t_0<s_0<\pi\) and
\[
 \left\lceil\frac{1}{\CapVol_n(\tfrac{s_0}{2})}\right\rceil
 >
 \left\lfloor\frac{1}{\CapVol_m(\tfrac{t_0}{4})}\right\rfloor,
\]
then
\[
 \dgh(\Sph^m,\Sph^n)
 \geq\tfrac12\dIchi(\Sph^m,\Sph^n)
 \geq\tfrac{s_0-t_0}{2}.
\]
In particular, the simpler strict inequality
\[
 \CapVol_n(\tfrac{s_0}{2})<\CapVol_m(\tfrac{t_0}{4})
\]
implies the same conclusion.
\end{proposition}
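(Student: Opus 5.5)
The plan is to combine the two sides of \Cref{prop:cap-profile-bounds}, applied in different dimensions, with the one-scale mismatch of \Cref{cor:one-scale-mismatch}.

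\emph{Source sphere.} Since \(n>m\geq1\), we have \(n\geq2\). The lower half of \eqref{eq:cap-profile-bounds} therefore applies to \(\Sph^n\) at scale \(s_0\in(0,\pi)\) and gives
\[
 \chprof{\Sph^n}(s_0)\geq\left\lceil\frac{1}{\CapVol_n(s_0/2)}\right\rceil.
\]

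\emph{Target sphere.} The upper bound \eqref{eq:cap-profile-upper-bound} is valid for every \(d\geq1\), including \(m=1\). Applied to \(\Sph^m\) at scale \(t_0\), it gives
\[
 \chprof{\Sph^m}(t_0)\leq\left\lfloor\frac{1}{\CapVol_m(t_0/4)}\right\rfloor.
\]
Chaining these two bounds through the hypothesis yields \(\chprof{\Sph^n}(s_0)>\chprof{\Sph^m}(t_0)\).

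\emph{Conclusion from the mismatch.} Apply \Cref{cor:one-scale-mismatch} with \(X=\Sph^n\) and \(Y=\Sph^m\); both are compact, and \(0<t_0<s_0\). This gives \(s_0-t_0\leq\dIchi(\Sph^n,\Sph^m)\). Both \(\dIchi\) and \(\dgh\) are symmetric in their arguments, and \Cref{prop:chromatic-stability} gives \(\tfrac12\dIchi\leq\tfrac12\dIBor\leq\dgh\). Together these give the displayed chain of inequalities.

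\emph{The ``in particular'' clause.} Suppose \(\CapVol_n(s_0/2)<\CapVol_m(t_0/4)\), and put \(a:=1/\CapVol_n(s_0/2)\) and \(b:=1/\CapVol_m(t_0/4)\). Both values lie in \([1,\infty)\), since the cap radii are positive and less than \(\pi\). The assumption says \(a>b\). Then
\[
 \lceil a\rceil\geq a>b\geq\lfloor b\rfloor,
\]
so the main hypothesis holds and the first part applies.

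The only step needing care is keeping the roles straight. The isodiametric lower bound requires dimension at least \(2\), so it must be placed on the larger sphere \(\Sph^n\). The net-based upper bound holds in every dimension, so it goes on \(\Sph^m\). The strictness \(t_0<s_0\) is exactly what \Cref{cor:one-scale-mismatch} needs. No step should be a genuine obstacle, because the proposition simply packages earlier results.
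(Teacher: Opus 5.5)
Your proposal is correct and follows the paper's proof essentially verbatim. You apply the isodiametric lower half of \eqref{eq:cap-profile-bounds} on \(\Sph^n\) (using \(n\geq2\)) and the net upper bound \eqref{eq:cap-profile-upper-bound} on \(\Sph^m\), then finish with \Cref{cor:one-scale-mismatch} and \Cref{prop:chromatic-stability}. Your check of the ``in particular'' clause via \(\lceil a\rceil\geq a>b\geq\lfloor b\rfloor\), and your remark that \(\dIchi\) is symmetric, only spell out steps the paper leaves implicit.
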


\begin{proof}
Since \(n\geq2\), the lower half of \eqref{eq:cap-profile-bounds}
applies to \(\Sph^n\); the upper bound
\eqref{eq:cap-profile-upper-bound} applies to \(\Sph^m\), including when
\(m=1\).  Either hypothesis therefore gives
\(\chprof{\Sph^n}(s_0)>\chprof{\Sph^m}(t_0)\).
Now apply \Cref{cor:one-scale-mismatch} and
\Cref{prop:chromatic-stability}.
\end{proof}

\subsection[Comparison with Colding's volume bound]
{Comparison with Colding's volume bound}
\label{ssec:volume-comparison}

We next compare the chromatic lower bound with Colding's
volume-comparison lower bound \cite[Lemma~5.10]{colding1996large}, in the
form for round spheres given in \cite[Proposition~1.2]{lim2021gromov}.
This comparison leads to the
inverse-cap obstruction below and is not used in the paper's main
separation results.

\begin{definition}[Inverse-cap obstruction]
\label{def:inverse-cap-obstruction}
For integers \(0<m<n\), define the \emph{inverse-cap obstruction} by
\[
 \kappacap_{m,n}:=
 \sup_{0<\rho<\pi/2}
 \left[
  \CapVol_n^{-1}\!\left(\CapVol_m(\rho/2)\right)-\rho
 \right].
\]
\end{definition}
The expression is nonnegative: the bracket tends to zero as
\(\rho\downarrow0\), so its supremum is at least zero.

\begin{corollary}[Inverse-cap formula]
\label{cor:inverse-cap-formula}
For every pair of integers \(0<m<n\),
\[
 \frac12\dIchi(\Sph^m,\Sph^n)\geq\kappacap_{m,n}.
\]
Consequently,
\begin{equation}
\label{eq:dgh-inverse-cap}
 \dgh(\Sph^m,\Sph^n)
 \geq\frac12\dIchi(\Sph^m,\Sph^n)
 \geq\kappacap_{m,n}.
\end{equation}
Moreover,
\[
 \CapVol_d(r)=\frac12 I_{\sin^2r}\!\left(\frac d2,\frac12\right)
 \qquad(d\geq1,\ 0\leq r\leq\tfrac\pi2),
\]
where \(I\) is the regularized incomplete beta function.  Thus
\eqref{eq:dgh-inverse-cap} is a numerical one-dimensional optimization
for every \((m,n)\).
\end{corollary}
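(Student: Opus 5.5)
The plan is to apply \Cref{prop:general-cap-bound} at one pair of scales for each \(\rho\), and then take the supremum over \(\rho\). Fix \(0<\rho<\tfrac\pi2\) and write \(s^*:=\CapVol_n^{-1}(\CapVol_m(\rho/2))\). If \(s^*\leq\rho\), the bracket in \Cref{def:inverse-cap-obstruction} is nonpositive and contributes nothing beyond the trivial bound \(\tfrac12\dIchi\geq0\). So assume \(s^*>\rho\), and take \(t_0:=2\rho\) and \(s_0:=2s'\) with \(\rho<s'<s^*\). Since \(\rho<\tfrac\pi2\), the condition \(s_0<\pi\) requires \(s'<\tfrac\pi2\). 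Monotonicity of \(\CapVol_m\) gives \(\CapVol_m(\rho/2)\leq\CapVol_m(\tfrac\pi4)<\tfrac12\), so \(s^*<\tfrac\pi2\) because \(\CapVol_n(\tfrac\pi2)=\tfrac12\). Hence any \(s'<s^*\) is admissible.

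Next check the hypothesis of \Cref{prop:general-cap-bound}. Because \(\CapVol_n\) is strictly increasing, \(s'<s^*\) gives
\[
 \CapVol_n(\tfrac{s_0}{2})=\CapVol_n(s')<\CapVol_n(s^*)=\CapVol_m(\tfrac\rho2)=\CapVol_m(\tfrac{t_0}{4}).
\]
This is exactly the simpler strict inequality in \Cref{prop:general-cap-bound}, so
\[
 \tfrac12\dIchi(\Sph^m,\Sph^n)\geq\tfrac{s_0-t_0}{2}=s'-\rho.
\]
Letting \(s'\uparrow s^*\) yields \(\tfrac12\dIchi\geq s^*-\rho\). Taking the supremum over \(\rho\) then proves the first claim, and \eqref{eq:dgh-inverse-cap} follows from \Cref{prop:chromatic-stability}.

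The beta-function identity comes from the integral formula in the proof of \Cref{lem:cap-volume}. After substituting \(t=\sin\theta\), the numerator is \(\int_0^{\sin r}t^{d-1}(1-t^2)^{-1/2}\,dt\). Substituting \(w=t^2\) turns this into \(\tfrac12\int_0^{\sin^2 r}w^{d/2-1}(1-w)^{-1/2}\,dw=\tfrac12B_{\sin^2r}(\tfrac d2,\tfrac12)\). The same substitution turns the denominator \(2\int_0^1\) into \(B(\tfrac d2,\tfrac12)\), and the quotient is \(\tfrac12I_{\sin^2r}(\tfrac d2,\tfrac12)\).

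The only delicate step is keeping the strict inequalities and the constraint \(s_0<\pi\) compatible, which is why the argument approximates \(s^*\) from below. Everything else is bookkeeping.
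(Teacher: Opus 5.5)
Your proposal is correct and follows the paper's proof essentially step for step. It makes the same choice \(t_0=2\rho\), \(s_0=2s'\) with \(\rho<s'<s^*\), the same check that \(s^*<\tfrac\pi2\) via \(\CapVol_m(\rho/2)<\tfrac12\), the same trivial case \(s^*\leq\rho\), the limit \(s'\uparrow s^*\), and the final appeal to \Cref{prop:chromatic-stability}; the only difference is that you derive the beta-function identity from the integral in the proof of \Cref{lem:cap-volume} via \(w=t^2\), whereas the paper simply cites it as standard.
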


\begin{proof}
Fix \(0<\rho<\tfrac\pi2\) and put
\(a_\rho:=\CapVol_n^{-1}(\CapVol_m(\rho/2))\).
Since \(\rho/2<\pi/4\), one has
\(\CapVol_m(\rho/2)<\CapVol_m(\pi/2)=1/2\), and hence
\(a_\rho<\pi/2\).  Thus every \(a<a_\rho\) chosen below gives
\(s=2a<\pi\), as required in \Cref{prop:general-cap-bound}.
If \(a_\rho>\rho\), choose \(\rho<a<a_\rho\) and set
\(s=2a\), \(t=2\rho\).  Then
\[
 \CapVol_n(s/2)=\CapVol_n(a)
 <\CapVol_m(\rho/2)=\CapVol_m(t/4),
\]
so \Cref{prop:general-cap-bound} gives
\[
 \frac12\dIchi(\Sph^m,\Sph^n)\geq a-\rho.
\]
Let \(a\uparrow a_\rho\).  When \(a_\rho\leq\rho\), the same final
inequality is automatic because its right-hand side is nonpositive.
Taking the supremum over \(\rho\) proves the first assertion; chromatic stability gives
\eqref{eq:dgh-inverse-cap}.  The displayed beta-function formula is the standard
integral expression for normalized spherical cap volume; see
\cite[equation~(1), pp.~67--68]{li2011hyperspherical}.
\end{proof}

In this formulation, Colding's Gromov--Hausdorff lower
bound is
\begin{equation}
\label{eq:volume-comparison-bound}
 \dgh(\Sph^m,\Sph^n)\geq
 \frac12\sup_{0<\rho\leq\pi}
 \left[
  \CapVol_n^{-1}\!\left(\CapVol_m(\rho/2)\right)-\rho
 \right],
\end{equation}
Here the normalized ball-volume function \(v_d\) used by
\cite[Proposition~1.2]{lim2021gromov} is \(\CapVol_d\).
Funano subsequently applied Colding's volume-comparison
idea to box distance between spheres \cite{funano2008estimates}.
If \(\rho\in[\tfrac\pi2,\pi]\), then
\(\CapVol_m(\rho/2)\leq\CapVol_m(\pi/2)=\tfrac12\),
and hence
\(\CapVol_n^{-1}(\CapVol_m(\rho/2))
\leq\CapVol_n^{-1}(1/2)=\tfrac\pi2\leq\rho\).
Thus the bracket in \eqref{eq:volume-comparison-bound} is nonpositive on this interval.
Its supremum over \((0,\tfrac\pi2)\) is nonnegative because the bracket
tends to zero as \(\rho\downarrow0\).  The supremum over
\(0<\rho\leq\pi\) is therefore \(\kappacap_{m,n}\).
Consequently, \eqref{eq:volume-comparison-bound} gives
\(\dgh(\Sph^m,\Sph^n)\geq\tfrac12\kappacap_{m,n}\), whereas
\eqref{eq:dgh-inverse-cap} gives
\(\dgh(\Sph^m,\Sph^n)\geq\kappacap_{m,n}\).

\begin{remark}[Where the factor two is recovered]
\label{rem:factor-two-mechanism}
The volume-comparison argument transports radius-\(\rho\) covers across a
Gromov--Hausdorff error \(\varepsilon\), producing covering radius
\(\rho+2\varepsilon\).  The chromatic argument instead transports
classes of diameter \(2\rho\): distortion \(2\varepsilon\) enlarges the
diameter only to \(2\rho+2\varepsilon=2(\rho+\varepsilon)\), after which
spherical isodiametry compares the class with a cap of radius
\(\rho+\varepsilon\).  The finite witness in
\Cref{prop:finite-discretization} makes this isodiametric step legitimate
without assuming that arbitrary color classes are measurable.
\end{remark}

\subsection{Random-cap coloring}

The packing-net upper bound in \eqref{eq:cap-profile-upper-bound} loses an
exponential factor because its disjoint caps have half the desired
color-class radius.  A probabilistic covering removes that loss at the
exponential scale.

The argument below is elementary; for a stronger classical theorem on
covering spheres by equal caps, see
\cite[Theorem~1.1]{boroczky2003covering}.

\begin{lemma}[Random-cap coloring]
\label{lem:random-cap-coloring}
Let \(d\geq1\), \(0<u<\pi\), and
\(0<\varepsilon<\tfrac{u}{6}\).  Then
\begin{equation}
\label{eq:random-cap-coloring}
 \chprof{\Sph^d}(u)
 \leq
 \left\lceil
 \frac{
  1+(d+1)\log\bigl(1+\csc(\varepsilon/2)\bigr)
 }{
  \CapVol_d(u/2-2\varepsilon)
 }
 \right\rceil.
\end{equation}
In particular,
\begin{equation}
\label{eq:random-cap-coloring-explicit}
 \chprof{\Sph^d}(u)
 \leq1+
 \frac{
  \pi\bigl[1+(d+1)\log\bigl(1+\csc(\varepsilon/2)\bigr)\bigr]
 }{
  \varepsilon\sin^{d-1}(u/2-3\varepsilon)
 }.
\end{equation}
\end{lemma}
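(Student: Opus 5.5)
We reduce the coloring to a covering of \(\Sph^d\) by caps of radius \(u/2-2\varepsilon\) with centers chosen at random, together with a finite net that makes the covering certifiable. The starting observation is that a covering of \(\Sph^d\) by \(k\) open caps of radius strictly less than \(u/2\) yields a proper \(k\)-coloring of \(\Bor_{\geq u}(\Sph^d)\). To see this, assign each point to some cap containing it. Two points in the same class are then at distance strictly below \(u\), so they are not adjacent. It therefore suffices to produce a covering by \(k\) caps of radius \(u/2-\varepsilon\), with \(k\) equal to the right-hand side of \eqref{eq:random-cap-coloring}.

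\emph{Step 1 (finite net).} We use \Cref{lem:chordal-net} with chordal parameter \(\delta=2\sin(\varepsilon/2)\). By \eqref{eq:angular-chordal-conversion} this gives a set \(Q\) with every point of \(\Sph^d\) at geodesic distance less than \(\varepsilon\) from \(Q\), and
\[
 |Q|\leq\bigl(1+\csc(\varepsilon/2)\bigr)^{d+1}.
\]

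\emph{Step 2 (random centers).} We pick \(k\) independent uniform centers \(\mathbf y_1,\ldots,\mathbf y_k\). For a fixed \(q\in Q\), the probability that no \(\mathbf y_i\) lies within distance \(u/2-2\varepsilon\) of \(q\) is \((1-p)^k\leq e^{-kp}\), where \(p=\CapVol_d(u/2-2\varepsilon)\). We take \(k=\lceil(1+\log|Q|)/p\rceil\). A union bound then gives failure probability at most \(|Q|e^{-1-\log|Q|}=e^{-1}<1\). Hence some choice of centers covers every point of \(Q\) at radius \(u/2-2\varepsilon\). By the triangle inequality, every point of \(\Sph^d\) then lies within distance less than \(u/2-\varepsilon<u/2\) of some center. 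Since \(\log|Q|\leq(d+1)\log(1+\csc(\varepsilon/2))\), this gives \eqref{eq:random-cap-coloring}.

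\emph{Step 3 (explicit form).} The remaining task, and the only real obstacle, is a lower bound for \(\CapVol_d(\rho)\) with \(\rho=u/2-2\varepsilon\in(0,\pi/2)\). We use the polar formula from the proof of \Cref{lem:cap-volume}:
\[
 \CapVol_d(\rho)=\frac{\int_0^\rho\sin^{d-1}\theta\,d\theta}{\int_0^\pi\sin^{d-1}\theta\,d\theta}.
\]
The denominator is at most \(\pi\). For the numerator, we restrict the integral to \([\rho-\varepsilon,\rho]\). Here \(\rho-\varepsilon=u/2-3\varepsilon>0\) because \(\varepsilon<u/6\), and \(\sin\) is increasing on \([0,\pi/2]\). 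The numerator is therefore at least \(\varepsilon\sin^{d-1}(u/2-3\varepsilon)\). This gives
\[
 \CapVol_d(u/2-2\varepsilon)\geq\frac{\varepsilon\sin^{d-1}(u/2-3\varepsilon)}{\pi}.
\]
Finally, \(\lceil x\rceil\leq1+x\) turns \eqref{eq:random-cap-coloring} into \eqref{eq:random-cap-coloring-explicit}. The case \(d=1\) needs no separate treatment: the exponent \(d-1=0\) and the same estimate applies.
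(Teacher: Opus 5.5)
Your proposal is correct and follows the paper's proof essentially step for step: the same angular \(\varepsilon\)-net from \Cref{lem:chordal-net} with \(\delta=2\sin(\varepsilon/2)\), the same random-center union bound with failure probability at most \(e^{-1}\), the triangle-inequality passage to classes of pairwise distance below \(u-2\varepsilon\), and the same cap-volume lower bound. That bound restricts the polar integral to \([u/2-3\varepsilon,\,u/2-2\varepsilon]\) and bounds the denominator by \(\pi\). The only difference is cosmetic: you sample \(\lceil(1+\log|Q|)/p\rceil\) centers and then bound \(\log|Q|\), which is harmless because this number is at most the right-hand side of \eqref{eq:random-cap-coloring}.
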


\begin{proof}
By \eqref{eq:angular-chordal-conversion}, apply
\Cref{lem:chordal-net} with
\(\delta=2\sin(\tfrac{\varepsilon}{2})\).  It gives an angular
\(\varepsilon\)-net \(Q\subseteq\Sph^d\) with
\(|Q|\leq\bigl(1+\csc(\varepsilon/2)\bigr)^{d+1}\).
Put \(v=\CapVol_d(\tfrac{u}{2}-2\varepsilon)\), and sample
\[
 N:=\left\lceil
 \frac{1+(d+1)\log(1+\csc(\varepsilon/2))}{v}
 \right\rceil
\]
independent uniformly distributed centers on \(\Sph^d\).  A fixed
\(q\in Q\) is missed by all caps of radius
\(\tfrac{u}{2}-2\varepsilon\) with probability at most \(e^{-Nv}\).
The union
bound shows that the probability that some \(q\in Q\) is missed is at
most \(|Q|e^{-Nv}\leq e^{-1}<1\).
Thus there is a realization for which every point of \(Q\) lies in one
of the sampled caps.

Every point of the sphere is then at distance strictly below
\(\tfrac{u}{2}-\varepsilon\) from one of the sampled centers.  Assigning points
to such centers gives color classes in which any two points are at
distance strictly below \(u-2\varepsilon<u\); each such class is therefore
independent in \(\Bor_{\geq u}(\Sph^d)\), proving
\eqref{eq:random-cap-coloring}.  Finally, if
\(0<a<r\leq\tfrac\pi2\), then
\[
 \CapVol_d(r)=
 \frac{\int_0^r\sin^{d-1}\theta\,d\theta}
      {\int_0^\pi\sin^{d-1}\theta\,d\theta}
 \geq\frac{a}{\pi}\sin^{d-1}(r-a).
\]
Use \(a=\varepsilon\), \(r=\tfrac{u}{2}-2\varepsilon\), and
\(\lceil z\rceil\leq z+1\) to obtain
\eqref{eq:random-cap-coloring-explicit}.
\end{proof}

\endgroup
 \paperBfinitecounterexampleappendix
\section{\texorpdfstring{Function-level realization: modulus of discontinuity}{Function-level realization: modulus of discontinuity}}
\label{sec:modulus}

\begingroup
\setlength{\abovedisplayskip}{6pt plus 2pt minus 2pt}
\setlength{\belowdisplayskip}{6pt plus 2pt minus 2pt}
\setlength{\abovedisplayshortskip}{3pt plus 2pt minus 1pt}
\setlength{\belowdisplayshortskip}{4pt plus 2pt minus 1pt}

This appendix establishes an exact maximum law for the
modulus of discontinuity under spherical joins of functions and uses it
to give an independent function-level proof of
\Cref{thm:c-finite-join}.

\begin{definition}[Modulus of discontinuity]
\label{def:modulus}
Let \(X\) be a topological space, let \(Y\) be a metric space, and let
\(f\colon X\to Y\) be an arbitrary function.  Define
\[
 \delta(f,x)
 :=
 \inf\bigl\{
 \operatorname{diam}f(U):
 U\text{ is an open neighborhood of }x
\bigr\}
\]
and
\(\delta(f):=\sup_{x\in X}\delta(f,x)\).
Equivalently,
\[
 \delta(f)
 =
 \inf\bigl\{
 D\geq0:
 \text{every }x\in X\text{ has an open neighborhood }U_x
 \text{ with }\operatorname{diam}f(U_x)\leq D
 \bigr\}.
\]
\end{definition}

This local-diameter notion and the terminology ``modulus of
discontinuity'' go back to Dubins--Schwarz
\cite[p.~51]{dubins1981equidiscontinuity}.
The generalized Dubins--Schwarz theorem and its tightness statement
extend \cite[Theorem~1]{dubins1981equidiscontinuity}; in the form needed
here, \cite[Theorems~1.3 and~7.6]{adams2022gromov} give the exact
characterization
\begin{equation}
\label{eq:c-as-optimal-modulus}
 c_{m,n}
 =
 \inf\bigl\{
 \delta(f):
 f\colon\Sph^n\to\Sph^m\text{ is odd}
 \bigr\}.
\end{equation}

Let \(f\colon\Sph^{k'}\to\Sph^k\) and
\(g\colon\Sph^{\ell'}\to\Sph^\ell\) be arbitrary functions.  For
continuous \(f\) and \(g\), their usual join
\cite[Section~4.2, p.~77]{matouvsek2003using} is the map
\(f*g\colon\Sph^{k'}*\Sph^{\ell'}\longrightarrow\Sph^k*\Sph^\ell\)
given in the coordinates of
\eqref{eq:spherical-join-coordinates} by
\[
 (f*g)[x,y,\theta]
 :=
 [f(x),g(y),\theta].
\]
The quotient formula is well defined without assuming
continuity, and we use the same notation \(f*g\) for this extension.

\begin{proposition}[Exact modulus under spherical joins]
\label{prop:join-modulus}
For arbitrary functions
\(f\colon\Sph^{k'}\to\Sph^k,\quad
g\colon\Sph^{\ell'}\to\Sph^\ell\),
one has
\begin{equation}
\label{eq:join-modulus}
 \delta(f*g)=\max\{\delta(f),\delta(g)\}.
\end{equation}
If \(f\) and \(g\) are odd, then \(f*g\) is odd.
\end{proposition}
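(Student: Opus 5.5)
The plan is to prove the two inequalities \(\delta(f*g)\geq\max\{\delta(f),\delta(g)\}\) and \(\delta(f*g)\leq\max\{\delta(f),\delta(g)\}\) separately. Throughout we identify source and target with round spheres via the isometries \(F_{k',\ell'}\) and \(F_{k,\ell}\) of \eqref{eq:spherical-join-coordinates}. In these coordinates \(f*g\) sends \((x\cos\theta,y\sin\theta)\) to \((f(x)\cos\theta,g(y)\sin\theta)\). Oddness is immediate: \((f*g)[-x,-y,\theta]=[-f(x),-g(y),\theta]\), and \(F_{k,\ell}\) intertwines the diagonal involution with the antipodal map.

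\emph{Lower bound.} Restrict to the equatorial copy \(\theta=0\), i.e.\ \(\Sph^{k'}\times\{0\}\). There \(f*g\) is \(\iota_k\circ f\), and \(\iota_k\) is an isometric embedding. Fix \(x\) and any neighborhood \(W\) of \((x,0)\) in \(\Sph^{k'+\ell'+1}\). Take \(U=\{x' : (x',0)\in W\}\), an open neighborhood of \(x\). Then \(f*g(W)\supseteq\iota_k f(U)\), so
\[
\operatorname{diam}(f*g)(W)\geq\operatorname{diam}f(U)\geq\delta(f,x).
\]
Hence \(\delta(f*g)\geq\delta(f)\). The case \(\theta=\pi/2\) gives \(\delta(f*g)\geq\delta(g)\) symmetrically.

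\emph{Upper bound.} Put \(D>\max\{\delta(f),\delta(g)\}\) and fix a point \(p=(x\cos\theta_0,y\sin\theta_0)\). Choose open \(U\ni x\) and \(V\ni y\) with \(\operatorname{diam}f(U),\operatorname{diam}g(V)\leq D'<D\). The neighborhood of \(p\) depends on the position of \(\theta_0\).

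\begin{itemize}
\item For \(0<\theta_0<\pi/2\), use \(W=\{(x'\cos\theta,y'\sin\theta):x'\in U,\ y'\in V,\ |\theta-\theta_0|<\eta\}\). Two image points with parameters \(\theta,\phi\) have cosine of distance
\[
\cos\theta\cos\phi\cos d(f x',f x'')+\sin\theta\sin\phi\cos d(g y',g y'')\geq\cos(\theta-\phi)\cos D'.
\]
The distance is therefore at most \(\arccos(\cos(\theta-\phi)\cos D')\), which is \(<D\) for \(\eta\) small. This uses \(D'<\pi\); if \(D\geq\pi\) there is nothing to prove.
\item For \(\theta_0=0\), the \(y\)-coordinate is collapsed and \(V\) cannot be controlled. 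Take \(W=\{\theta<\eta,\ x'\in U\}\) with \(y'\) arbitrary. The cosine bound becomes \(\cos\theta\cos\phi\cos D'-\sin\theta\sin\phi\), which tends to \(\cos D'\) as \(\eta\to0\). So the diameter is again \(<D\) for \(\eta\) small.
\item The case \(\theta_0=\pi/2\) is symmetric.
\end{itemize}

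Letting \(D\downarrow\max\{\delta(f),\delta(g)\}\) gives the upper bound. The main obstacle is these endpoint fibres and the openness of \(W\) in the quotient topology. Openness follows because \(W\) is the preimage of an open set of \(\Sph^{k'}\times\Sph^{\ell'}\times[0,\pi/2]\) that is saturated, i.e.\ it contains all of \(\{\theta<\eta\}\) in the free coordinate. The small-angle continuity estimate should be stated once, uniformly in both cases.
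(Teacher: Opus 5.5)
Your proof is correct and follows the paper's structure. The lower bound restricts to the endpoint subspheres; the paper phrases this as precomposition with the continuous inclusions $\iota_{k'},\iota_{\ell'}$. The upper bound uses an interior/endpoint case split with open saturated neighborhoods and the spherical cosine formula. The only difference is cosmetic: you bound the cosine directly, by $\cos(\theta-\phi)\cos D'$ at interior points and by $\cos\theta\cos\phi\cos D'-\sin\theta\sin\phi$ at the endpoints, whereas the paper applies the triangle inequality through an intermediate point (at a common latitude, respectively the equatorial point $f(u)$).
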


\paragraph{{A second proof of the finite join inequality.}}
{We now apply the preceding function-level machinery to give a
second proof of \Cref{thm:c-finite-join}.}

\begin{proof}[Second proof of \Cref{thm:c-finite-join}]
Using the canonical associativity identifications for spherical joins,
iterating \eqref{eq:join-modulus} gives the finite version
\begin{equation}
\label{eq:finite-function-join-modulus}
 \delta(f_1*\cdots*f_k)=\max_i\delta(f_i).
\end{equation}
Combining \eqref{eq:finite-function-join-modulus} with
\eqref{eq:c-as-optimal-modulus} proves the finite join inequality.

\end{proof}

\begin{proof}[Proof of \Cref{prop:join-modulus}]
Put \(D:=\max\{\delta(f),\delta(g)\}\).
The endpoint inclusions
\(\iota_{k'}\colon\Sph^{k'}\hookrightarrow\Sph^{k'}*\Sph^{\ell'},\quad
\iota_{\ell'}\colon\Sph^{\ell'}\hookrightarrow\Sph^{k'}*\Sph^{\ell'}\)
are continuous, and
\((f*g)\circ\iota_{k'}=f,\quad
(f*g)\circ\iota_{\ell'}=g\),
after identifying the corresponding endpoint subspheres in the target.
Precomposition by a continuous map cannot increase the modulus of
discontinuity.  Hence
\(\delta(f*g)\geq\delta(f),\quad
\delta(f*g)\geq\delta(g)\).

For the reverse inequality, fix a join point
\(z=[x,y,\theta]\) and \(\eps>0\).
First suppose \(0<\theta<\tfrac{\pi}{2}\).  Choose neighborhoods
\(U\ni x\) and \(V\ni y\) such that
\(\operatorname{diam}f(U)\leq D+\eps,\quad
\operatorname{diam}g(V)\leq D+\eps\).
Choose an interval \(I\ni\theta\) of length less than \(\eps\) whose
closure is contained in \((0,\tfrac{\pi}{2})\).  Consider two image points
\(A=[f(u),g(v),\alpha],\quad
B=[f(u'),g(v'),\beta]\),
where \(u,u'\in U\), \(v,v'\in V\), and \(\alpha,\beta\in I\).
Introduce
\(B'=[f(u'),g(v'),\alpha]\).
If \(D+\eps>\pi\), then the desired estimate is automatic because the
target sphere has diameter \(\pi\).  We may therefore suppose that
\(D+\eps\leq\pi\).  At the common latitude \(\alpha\), the cosine
formula gives
\[
 \begin{split}
 \cos d(A,B')
 &=
 \cos^2\alpha\,\cos d(f(u),f(u'))
 +
 \sin^2\alpha\,\cos d(g(v),g(v'))\\
 &\geq\cos(D+\eps),
 \end{split}
\]
and hence \(d(A,B')\leq D+\eps\).
Moreover, \(d(B',B)=|\alpha-\beta|<\eps\).
Thus \(d(A,B)<D+2\eps\), proving
\(\delta(f*g,z)\leq D\) at interior points.

At the first endpoint, choose an open neighborhood \(U\ni x\) such
that \(\operatorname{diam}f(U)\leq D+\eps\), and choose
\(0<\eta<\tfrac\pi2\).  The set
\(U\times\Sph^{\ell'}\times[0,\eta)\) is open and saturated in the
prequotient, so its image \(W\) in the join is an open neighborhood of
\(z\).  Every target point in \((f*g)(W)\) has the form
\([f(u),g(v),\alpha]\) and is at distance exactly \(\alpha\) from the
equatorial point \(f(u)\).  Hence any two such image points are at
distance at most
\(2\eta+\operatorname{diam}f(U)\leq2\eta+D+\eps\).
Letting \(\eta,\eps\downarrow0\) proves the bound at this endpoint.
The second endpoint is identical, using \(g\).
Therefore \(\delta(f*g)\leq D\), which proves
\eqref{eq:join-modulus}.

Finally, if \(f\) and \(g\) are odd, then
\((f*g)[-x,-y,\theta]=[-f(x),-g(y),\theta]
=-(f*g)[x,y,\theta]\).
\end{proof}

Taking \(g=\id_{\Sph^0}\) in \Cref{prop:join-modulus} gives the spherical
suspension \(Sf:=f*\id_{\Sph^0}\) and
\[
 \delta(Sf)=\delta(f).
\]
This is the suspension used by Lim--M\'emoli--Smith
\cite[Section~4, especially Lemma~4.3]{lim2021gromov}; the identity
above applies to arbitrary functions.

\begin{remark}[Comparison with synchronized joins of
correspondences]
\label{rem:parallel-correspondence-join}

Kim, Lim, and M\'emoli define the synchronized spherical
join for correspondences in
\cite[Definition~1.1]{kim2026consecutive}.  Its defining formula extends
verbatim to arbitrary relations.  For functions
\(f_i\colon\Sph^{n_i}\to\Sph^{m_i}\), it gives
\(\operatorname{graph}(f_1*\cdots*f_k)
=\operatorname{graph}(f_1)*\cdots*\operatorname{graph}(f_k)\).
Here the operation on the left is the usual spherical
join of functions, whereas that on the right is the synchronized join of
their graph relations.

For \(1\leq i\leq k\), let \(\mathcal R_i\) be a correspondence between
\(\Sph^{m_i}\) and \(\Sph^{n_i}\), and define
\(M:=\sum_{i=1}^k(m_i+1)-1,\quad
N:=\sum_{i=1}^k(n_i+1)-1\).
Their joined relation satisfies
\(\mathcal R_1*\cdots*\mathcal R_k\subseteq \Sph^M\times\Sph^N\)
and, by their Theorem~B, the exact distortion formula
\[
 \dis(\mathcal R_1*\cdots*\mathcal R_k)
 =\max_{1\leq i\leq k}\dis(\mathcal R_i).
\]
Taking correspondences with distortion arbitrarily close to the
Gromov--Hausdorff infimum therefore gives
\[
 \dgh(\Sph^M,\Sph^N)
 \leq
 \max_{1\leq i\leq k}\dgh(\Sph^{m_i},\Sph^{n_i}).
\]

For \(0\leq r_i\leq\pi\), the Vietoris--Rips analogues proved here are
\Cref{thm:finite-metric-join} and
\Cref{thm:c-finite-join}:
\[
 \bigast_{i=1}^k\VRm(\Sph^{m_i};r_i)
 \longrightarrow
 \VRm\left(\Sph^M;\max_{1\leq i\leq k}r_i\right),
 \qquad
 c_{M,N}\leq\max_{1\leq i\leq k}c_{m_i,n_i}.
\]
The distortion identity and the two Vietoris--Rips inequalities follow
from the same spherical cosine identity
\eqref{eq:spherical-join-cosine}.

\end{remark}
\endgroup

\phantomsection
\addcontentsline{toc}{section}{References}
\newcommand{\etalchar}[1]{$^{#1}$}

\end{document}